\newcommand{\cL}{{\mathcal L}}
\newcommand{\bR}{{\mathbb R}}
\newcommand{\bE}{{\mathbf E}}
\newcommand{\wrt}{with respect to }
\newtheorem{assumption}{Assumption}
\newtheorem{clm}{Claim}
\DeclarePairedDelimiter{\floor}{\lfloor}{\rfloor}
\numberwithin{equation}{section}
\numberwithin{theorem}{section}
\numberwithin{lemma}{section}
\numberwithin{clm}{section}
\begin{document}

\title{Importance sampling in path space for diffusion processes
 with slow-fast variables 
}

\author{Carsten Hartmann \and Christof Sch\"{u}tte \and Marcus Weber \and Wei Zhang}

\authorrunning{C. Hartmann et al.} 

\institute{C. Hartmann, W. Zhang \at
              Institute of Mathematics, Freie Universit\"{a}t Berlin, Arnimallee 6, 14195 Berlin, Germany \\
              \email{carsten.hartmann@fu-berlin.de, wei.zhang@fu-berlin.de}           
       %
           \and
           C. Sch\"utte, M. Weber  \at
              Zuse Institute Berlin, Takustrasse 7, 14195 Berlin, Germany\\
              \email{schuette@zib.de, weber@zib.de}    
                                }

\date{Received: date / Accepted: date}

\maketitle

\begin{abstract}

Importance sampling is a widely used technique to reduce the variance of a 
Monte Carlo estimator by an appropriate change of measure. In this work, we study importance sampling in the framework
of diffusion process and consider the change of measure which is 
realized by adding a control force to the original dynamics. For certain exponential type expectation, the corresponding control force of the optimal change of measure leads to a zero-variance estimator and is 
 related to the solution of a Hamilton-Jacobi-Bellmann equation. 
We focus on certain diffusions with both slow and fast variables, and the main
result is that we obtain an upper bound of the relative error for the importance sampling estimators with control obtained from the limiting dynamics. 
We demonstrate our approximation strategy with an illustrative numerical example. 
\keywords{Importance sampling \and Hamilton-Jacobi-Bellmann equation \and Monte Carlo method \and change of measure \and
rare events \and diffusion process.}
\end{abstract}

\section{Introduction}
\label{sec-intro}
Monte Carlo (MC) methods are powerful tools to solve high-dimensional problems that are not amenable
to grid-based numerical schemes \cite{junliu_mc}. Despite their quite long history since the invention of the computer, 
the development of MC method and applications thereof are a field of active research. 
Variants of the standard Monte Carlo method include Metropolis MC \cite{mcmc-hastings,mcmc-brooks}, Hybrid
MC \cite{hmc-duane,hmc-Schutte}, Sequential
MC \cite{smc-liu,smc-Doucet2001}, to mention just a few.

A key issue for many MC methods is variance reduction in order to improve the convergence of the 
corresponding MC estimators. Although all unbiased MC estimators share the 
same $\mathcal{O}(N^{-\frac{1}{2}})$ decay of their variances with the sample size $N$, the prefactor matters a lot 
for the performance of the MC method.  
Therefore variance reduction techniques  (see, e.g., \cite{mc-asmussen-2007,junliu_mc}) seek to decrease the constant prefactor
 and thus to increase the accuracy and efficiency of the estimators. 
 
 In this paper, we focus on the importance sampling method for variance reduction. The basic idea is to 
generate samples from an alternative probability distribution (rather than 
sampling from the original probability distribution), so that the ``important'' regions in
state space are more frequently sampled. To give an example, consider a real-valued random variable $X$ on some probability space $(\Omega,\mathcal{F},\mathbf{P})$ and the calculation of a probability 
\[
\mathbf{P}(X\in B)=\mathbf{E}\!\left(\chi_{B}(X)\right)
\]
of the event $\{\omega\in\Omega\colon X(\omega)\in B\}$ that is rare. When the set $B$ is rarely hit by the random variable $X$, it may be a good idea to draw samples from another probability distribution, say, $\mathbf{Q}$ so that the event $\{X \in B\}$ has larger probability under $\mathbf{Q}$. An unbiased estimator of $\mathbf{P}(X\in B)$ can then be based on the appropriately reweighted expectation under $\mathbf{Q}$, i.e., 
\[
\mathbf{E}\!\left(\chi_{B}(X)\right) =
\mathbf{E}_{\mathbf{Q}}\!\left(\chi_{B}(X)\Psi\right)\,,
\]    
with $\Psi(\omega)=(\rm{d}\mathbf{P}/\rm{d}\mathbf{Q})(\omega)$ being the Radon-Nikodym derivative of $\mathbf{P}$ \wrt $\mathbf{Q}$. The difficulty now lies in a clever choice of $\mathbf{Q}$, because not every probability measure $\mathbf{Q}$ that puts more weight on  the ``important'' region $B$ leads to a variance reduction of the corresponding estimator. Especially in cases when the two probability distributions are too different from each other so that  the Radon-Nikodym derivative $\Psi$ (or likelihood ratio) becomes almost degenerate, the variance typically grows and one is better off with the plain vanilla MC estimator that is based on drawing samples from the original distribution $\mathbf{P}$. Importance sampling thus deals with clever choices of $\mathbf{Q}$ that enhance the sampling of events like $\{X\in B\}$ while mimicking the behaviour of the original distribution in the relevant regions. Often such a choice can be based on large deviation asymptotics that provides estimates for the probability of the event $\{X\in B\}$ as a function of a smallness parameter; see, e.g., \cite{ip-blanchet-2008,ip-glasserman,ip-asmussen-2008,ip-dupuis,ip-dupuis-multiscale,ip-eric}.

Here we focus on the path sampling problem for diffusion processes.
Specifically, given a diffusion process $(X_{t})_{t\ge 0}$ governed by a stochastic
differential equation (SDE), our aim is to compute the expectation of some
path functional of $X_{t}$ with respect to the underlying probability measure $\mathbf{P}$ generated by the Brownian motion. 
In this setting, we want to apply importance sampling and draw samples
(i.e.~trajectories) from a modified SDE to which a control force has been
added that drives the dynamics to the important regions in state space. The
control force generates a new probability measure on the space of trajectories
$(X_{t})_{t\ge 0}$, and estimating the expectation of the path functional \wrt
the original probability measure by sampling from the controlled SDE is
possible if the trajectories are reweighted according to the Girsanov theorem~\cite{oksendalSDE}. 
We confine ourselves to certain exponential path functionals which will be explicitly given below. For this type of path
functionals, the optimal change of measure exists that admits importance sampling estimator with zero
variance. Furthermore, the path sampling problem admits a dual formulation in terms of a 
stochastic optimal control problem, in which case finding the optimal change of measure is equivalent to 
solving the Hamilton-Jacobi-Bellmann (HJB) equation associated with 
the stochastic control problem. 

\textbf{Relevant work and contribution of this paper.}
While in general it is impractical to find the exact optimal control force by solving an optimal control problem, 
there is some hope to find computable approximations 
to the optimal control that yield importance sampling estimators which are sufficiently
accurate in that they have small variance. 
A general theoretical framework has been established by Dupuis and Wang in \cite{dupuis_isaaces,ip-dupuis},
where they connected the subsolutions of HJB equation and the rate of variance decay for the corresponding importance sampling estimators.
This theoretical framework has been further applied by Dupuis, Spiliopoulos and Wang
in a series of papers \cite{rare_event_rough,ip-dupuis-multiscale,ip-kostas1,ip-kostas3}
to study systems of quite general forms and several adaptive importance
sampling schemes were suggested based on large deviation analysis.
In many cases, these importance sampling schemes were shown to be
asymptotically optimal in logarithmic sense. Also see discussions in  \cite{ip-eric,spiliopoulos2015}. 
Closely related to our present work, dynamics involving two parameters $\delta
, \epsilon>0$, that represent time scale separation between slow and fast
variables and the noise intensity, were studied in \cite{ip-kostas1}. Therein
the author carried out a systematic analysis for dynamics within different
regimes that are expressed 
by the ratio $\frac{\epsilon}{\delta}$ as $\epsilon \rightarrow 0$,
where $\delta = \delta(\epsilon)$.
Importance sampling for systems in the regime when $\frac{\epsilon}{\delta} \rightarrow +\infty$ with
random environment was studied in \cite{ip-kostas3}.
A numerical scheme that leads to importance sampling estimators with vanishing relative error 
for diffusion processes in the small noise limit has been proposed in \cite{ip-eric}. 
On the other hand, while importance sampling is crucial in the small noise limit
when $\epsilon \rightarrow 0$, some recent
work~\cite{ip-kostas2,spiliopoulos2015} also considered the performance of
importance sampling estimators when $\epsilon$ is small but fixed
(pre-asymptotic regime), especially when systems' metastability is involved~\cite{ip-kostas2}.

Inspired by these previous studies, in the present work we consider importance
sampling for diffusions with both slow and fast time scales. 
See equation (\ref{averaging-dynamics}) in Section~\ref{sec-main}.
Instead of studying importance sampling estimators associated with general
subsolutions of the HJB equation as in \cite{ip-dupuis,rare_event_rough,ip-dupuis-multiscale,ip-kostas1,ip-kostas3}, we consider a specific control which can be constructed from the low-dimensional limiting dynamics.
The main contribution of the present work is Theorem~\ref{main_thm} in
Section~\ref{sec-main} which states that, under certain assumptions, 
the importance sampling estimator associated to this specific control is asymptotically optimal in
the time scale separation limit and an upper bound on the relative error of the corresponding estimator is obtained. 
To the best of our knowledge, this is the first result about the explicit dependence of
the relative error of the importance sampling estimator on the time-scale separation parameter. 
As a secondary contribution, since the proof is based on a careful study of the multiscale
process and the limiting process, several error estimates 
for the strong approximation of the original process by the limiting process are obtained as a
by-product. See Theorem~\ref{main-result-2}-\ref{main-result-4} in Section~\ref{sec-proof}.

Before concluding the introduction, we compare our results with the previous
work in more details and discuss some limitations. First of all, the two-scale dynamics (\ref{averaging-dynamics}) considered in the present work is a special case of the dynamics considered in \cite{ip-kostas1,ip-kostas3}
 (corresponding to coefficients $b=g=\tau_1=0$ there).
This specialization allows us to prove strong convergence of the dynamics towards the limit dynamics. 
Secondly, instead of considering asymptotic regime for both $\epsilon,
\delta \rightarrow 0$ as in \cite{ip-dupuis-multiscale,ip-kostas1,ip-kostas3}, here we only
consider the time-scale separation limit and assume the other parameter
$\beta$ in (\ref{averaging-dynamics}), which is related to system's
temperature, is fixed. (Roughly speaking, this corresponds to the case when $\delta \rightarrow 0$
with fixed $\epsilon$ in \cite{ip-kostas1,ip-kostas3}). As a consequence, the constant in
Theorem~\ref{main_thm} depends on $\beta$. Thirdly, we assume Lipschitz
conditions on system's coefficients, which
may be restrictive in many applications. 
Generalizing the theoretical results to non-Lipschitz case is possible but not trivial and
will be considered in future work. We refer to \cite{cerrai_nonlip} for a related
studies of reaction-diffusion equations. 

Nevertheless, the two-scale dynamics (\ref{averaging-dynamics}) is an interesting 
mathematical paradigm for many applications that involve both slow and fast
time scales 
(we refer to \cite{asymptotic_analysis,book_PS08} for general references about averaging and homogenization). 
And our results are of
different type comparing to the above mentioned literatures.
In applications, especially in climate sciences and molecular
dynamics~\cite{meta_simple_climate_model,majda_math_pers,msm},
systems may have a few degrees of freedom which evolves on a large time scale
and exhibits \emph{metastability} feature, while the other degrees of freedom are rapidly evolving.  
In this situation, due to the presence of metastability, standard
Monte Carlo sampling may become inefficient and shows large sample variance even for
moderate temperatures $\beta$ (also see \cite{ip-kostas2}). We expect our
results will be relevant for developing efficient importance sampling schemes in this situation.
A more detailed discussion based on an illustrative numerical example will be
presented in Section~\ref{sec-examples}.

\textbf{Organization of the article. }
This paper is organized as follows. In Section~\ref{sec-setup},
we briefly introduce the importance sampling method in the diffusion setting and discuss the variance of 
Monte Carlo estimators corresponding to a general
control force. Section~\ref{sec-main} states the assumptions and
our main result: an upper bound of the relative error 
for the importance sampling estimator based on suboptimal controls for the multiscale diffusions; the result is proved in
Section~\ref{sec-proof}, but we provide some heuristic arguments based on formal asymptotic 
expansions already in Section \ref{sec-main}. Section~\ref{sec-examples}
shows an illustrative numerical example that demonstrate the performance of the
importance sampling method. Appendix~\ref{app-1} and \ref{app-2} contain 
technical results that are used in the proof.

\section{Importance sampling of diffusions}
\label{sec-setup}

We consider the conditional expectation 
\begin{align}
I = \mathbf{E} \Big[\exp \Big(-\beta\int_t^T h(z_s) \,ds\Big)~\Big|~z_t = z\Big]
\label{exp-I}
\end{align}
on a finite time interval $[t,T]$, where $\beta > 0$, $h : \mathbb{R}^n
\rightarrow \mathbb{R}^+$, and $z_s \in \mathbb{R}^n$ satisfies the dynamics
\begin{align}
\begin{split}
  d z_s &= b(z_s) ds + \beta^{-1/2}\sigma(z_s) dw_s, \quad t \le s \le T \\
  z_t&=z
  \end{split}
\label{dynamics-1}
\end{align}
with $b : \mathbb{R}^n\rightarrow \mathbb{R}^n$, $\sigma :
\mathbb{R}^n\rightarrow \mathbb{R}^{n\times m}$, $w_s$ is a
standard $m$-dimensional Wiener process. 
Exponential expectations similar to (\ref{exp-I}) may arise either in connection with importance sampling 
\cite{ip-dupuis-multiscale,ip-kostas1,ip-kostas3,ip-eric}, or due to its
close relationship with certain optimal control problem \cite{var_rep1998,fleming2006}.
In recent years, it has also been exploited by physicists to study phase
transitions~\cite{Jack-1,Hedges06032009}.
\subsection{Importance sampling method}
\label{sub-sec-setup-1}
In this subsection we introduce the importance sampling
method to compute quantify (\ref{exp-I}). To simplify matters, we assume all the coefficients are
smooth and the controls satisfy the Novikov condition such that the
Girsanov theorem can be applied \cite{oksendalSDE}. Specific assumptions and
the concrete form of dynamics will be given in Section~\ref{sec-main}. 

It is known that dynamics (\ref{dynamics-1}) induces a probability measure $\mathbf{P}$ over the path ensembles $z_s,
t \le s \le T$ starting from $z$.
To apply the importance sampling method, we introduce
\begin{align}
  d\bar{w}_s = \beta^{1/2} u_s \,ds + dw_s, 
\label{girsanov-bs}
\end{align}
where $u_s \in \mathbb{R}^m$ will be referred to as the \emph{control force}. 
Then it follows from Girsanov theorem \cite{oksendalSDE} that $\bar{w}_s$ is a
standard $m$-dimensional Wiener process under
probability measure $\mathbf{\bar{P}}$, with Radon-Nikodym derivative 
\begin{align}
\frac{d\mathbf{\bar{P}}}{d\mathbf{P}} = Z_t = \exp
\Big(-\beta^{1/2} \int_t^T u_s \,dw_s -
\frac{\beta}{2}\int_t^T |u_s|^2 ds \Big).
\label{girsanov-zt}
\end{align}
In the following, we will omit the conditioning on the initial value at time $t$ . 
Letting $\mathbf{\bar{E}}$ denote the expectation under 
$\mathbf{\bar{P}}$, we have 
\begin{align}
I = \mathbf{E} \Big[\exp \Big(-\beta\int_t^T h(z_s) \,ds\Big)\Big] =
\mathbf{\bar{E}} \Big[\exp \Big(-\beta\int_t^T h(z^u_s) \,ds\Big) Z_t^{-1}\Big],
\label{target-exp}
\end{align}
with variance 
\begin{align}
  \mathrm{Var}_{u} I = \mathbf{\bar{E}}
\Big[\exp\Big(-2\beta\int_t^T h(z^u_s) \,ds\Big) (Z_t)^{-2}\Big]-I^2.
\label{variance}
\end{align}
Moreover, under $\mathbf{\bar{P}}$, we have 
\begin{align}
\begin{split}
  d z^u_s &= b(z^u_s) ds - \sigma(z^u_s) u_s\, ds + \beta^{-1/2}\sigma(z^u_s)
  d\bar{w}_s\,,   
  \quad t \le s \le T \\
  z^u_t &= z. \\
  \end{split}
\label{dynamics-2}
\end{align}

Now consider the calculation of (\ref{target-exp}) by a Monte Carlo sampling in path space, and suppose that $N$
independent trajectories $\{z^{u,i}_s, t \le s \le T\}$ of (\ref{dynamics-2}) have been generated
where $i = 1, 2, \cdots, N$. An unbiased estimator of (\ref{exp-I}) is now given by 
\begin{align}
I_N = \frac{1}{N}\sum_{i=1}^{N} \Big[\exp\Big(-\beta\int_t^T
h(z^{u,i}_s)\, ds\Big) (Z^{u,i}_t)^{-1} \Big]\,,
\label{ip_mc}
\end{align}
whose variance is 
\begin{align}
  \mathrm{Var}_u I_N = \frac{\mathrm{Var}_u I}{N} = \frac{1}{N} \Big[\mathbf{\bar{E}}
\Big(\exp\Big(-2\beta\int_t^T h(z^u_s) \,ds\Big) (Z_t)^{-2}\Big)-I^2\Big].
\label{variance_In}
\end{align}
Notice that $Z_t = 1$ when $u_s \equiv 0$, and we recover the standard Monte
Carlo method. In order to quantify the efficiency of the Monte Carlo method,
we introduce the \emph{relative error}~\cite{ip-dupuis,ip-eric}
\begin{align}
  \mbox{RE}_u(I) = \frac{\sqrt{\mathrm{Var}_u I}}{I}\,.
  \label{relative-error}
\end{align}
The advantage of introducing the control force $u_s$ is that we may
choose $u_s$ to reduce the relative error of the estimator (\ref{ip_mc}).
From (\ref{variance}) and (\ref{variance_In}), we can see that minimizing the
relative error of the new estimator is equivalent to choosing $u_s$ such that  
\begin{align}
  \frac{1}{I^2}\mathbf{\bar{E}} \Big[\exp\Big(-2\beta\int_t^T h(z^u_s) \,ds\Big) (Z_t)^{-2}\Big]
\label{var-exp}
\end{align}
is as close as possible to $1$.\\

\subsection{Dual optimal control problem and estimate of relative error}
\label{sub-sec-setup-2}
\noindent
To proceed, we make use of the following duality relation \cite{var_rep1998}:
\begin{align}
\ln\mathbf{E} \Big[\exp \Big(-\beta\int_t^T h(z_s) \,ds\Big)\Big] =
-\beta\inf_{u_s} \mathbf{\bar{E}} \Big\{\int_t^T h(z^u_s) \,ds + \frac{1}{2}\int_t^T
|u_s|^2 ds \Big\}\,,
\label{dual-relation}
\end{align}
where the infimum is over all processes $u_s$ which are progressively
measurable with respect to the augmented filtration generated by the Brownian motion.
See \cite{var_rep1998} for more discussions.
It is known that there is a feedback control $\hat{u}_s$ such that the infimum on the
right-hand side (RHS) of (\ref{dual-relation}) is attained (see \cite[Sec.~VI, Thm.~3.1]{fleming2006}).  
We will call $\hat{u}_s$ the \emph{optimal control force}. Accordingly we define
$\hat{w}_s, \hat{Z}_t, \mathbf{\hat{P}}$ to be the respective quantities in 
(\ref{girsanov-bs}) and (\ref{girsanov-zt}) with
$u_s$ replaced by $\hat{u}_s$, and we denote $\hat{z}_s=\hat{z}_{s}^{\hat{u}}$
as the solution of (\ref{dynamics-2}) with control force $\hat{u}_s$. 
Using Jensen's inequality one can show that (\ref{dual-relation}) implies 
\begin{align}
\exp \Big(-\beta\int_t^T h(\hat{z}_s) \,ds\Big) \hat{Z}^{-1}_t = I,
\qquad \mathbf{\hat{P}}-a.s. 
\label{as-const}
\end{align}
Combining the above equality with (\ref{variance_In}), it follows that the change of measure induced by $\hat{u}_s$ is optimal in the sense that the variance of the importance sampling estimator (\ref{ip_mc}) vanishes.  

It is helpful to note that the RHS of (\ref{dual-relation}) has an 
interpretation as the value function of the stochastic control problem:  
\begin{align}
U(t,z) = \inf\limits_{u_s} \mathbf{\bar{E}}\!\left(\int_t^T h(z^u_s) \,ds + \frac{1}{2}\int_t^T
|u_s|^2 ds ~\Big|~ z_t = z \right).
\label{value-fun}
\end{align}
From the dynamic programming principle \cite{fleming2006}, we know that
$U(t,z)$ satisfies the following \emph{Hamilton-Jacobi-Bellman} (HJB) or \emph{dynamic programming} equation:
\begin{align}
\begin{split}
  &\frac{\partial U}{\partial t} + \min\limits_{c \in \mathbb{R}^m} \Big\{h + \frac{1}{2}
|c|^2 + (b-\sigma c) \cdot \nabla U+
\frac{1}{2\beta}\sigma\sigma^T\colon\nabla^2 U\Big\} = 0 \\
&U(T, z) = 0\,.
\end{split}
\label{hjb}
\end{align}
The latter implies that the optimal control force $\hat{u}_s$ is of feedback form and satisfies  
\begin{align}
\hat{u}_s= \sigma^T(\hat{z}_{s}) \nabla U(s, \hat{z}_{s}).
\label{optimal-u}
\end{align}

Now we estimate (\ref{var-exp}) and thus the relative error (\ref{relative-error}) for
a general control $u_s$. To this end we suppose that the probability measures $\mathbf{\bar{P}}$ and
$\mathbf{\hat{P}}$ are mutually equivalent.
Then, using (\ref{as-const}),  
we can conclude that 
\begin{align}
\exp \Big(-\beta\int_t^T h(\hat{z}_s) \,ds\Big) \hat{Z}^{-1}_t = I,
\qquad \mathbf{\bar{P}}-a.s. 
\end{align}
and therefore
\begin{equation}\label{zt-ratio-exp}
\begin{aligned}
  &\frac{1}{I^2}\mathbf{\bar{E}} \Big[\exp\Big(-2\beta\int_t^T h(z^u_s) ds\Big)
(Z_t)^{-2}\Big]  \\
= & \frac{1}{I^2}\mathbf{\bar{E}} \Big[\exp\Big(-2\beta\int_t^T h(\hat{z}_s) ds\Big)
(\hat{Z}_t)^{-2}\Big(\frac{\hat{Z}_t}{Z_t}\Big)^2\Big] 
=  \mathbf{\bar{E}} \Big[\Big(\frac{\hat{Z}_t}{Z_t}\Big)^2\Big],
\end{aligned}
\end{equation}
where by Girsanov's theorem~(\ref{girsanov-zt}), we have
\begin{align}
\Big(\frac{\hat{Z}_t}{Z_t}\Big)^2 =& 
\exp \Big(-2\beta^{1/2}\int_t^T (\hat{u}_s - u_s) dw_s -
\beta \int_t^T (|\hat{u}_s|^2 - |u_s|^2) ds\Big).
\label{zt-ratio}
\end{align}
In order to simplify (\ref{zt-ratio-exp}), we follow
\cite{ip-dupuis-multiscale} and introduce another control force
$\tilde{\bar{u}}_s$ and change
the measure again. Specifically, we choose 
$\tilde{\bar{u}}_s = 2\hat{u}_s-u_s$ and define
$\tilde{\bar{w}}_t,\mathbf{\tilde{\bar{P}}}, \tilde{\bar{Z}}_t$ as
in (\ref{girsanov-bs})--(\ref{girsanov-zt}), with $u_s$ being replaced by 
$\tilde{\bar{u}}_s$. If we now let $\mathbf{\tilde{\bar{E}}}$ denote the expectation \wrt 
$\mathbf{\tilde{\bar{P}}}$ then, using equations (\ref{zt-ratio-exp}) and (\ref{zt-ratio}), we obtain 
\begin{align}
\mathbf{\bar{E}} \Big[\Big(\frac{\hat{Z}_t}{Z_t}\Big)^2\Big] =
\mathbf{\tilde{\bar{E}}} \Big[\Big(\frac{\hat{Z}_t}{Z_t}\Big)^2 \tilde{\bar{Z}}_t^{-1}Z_t\Big]
= 
\mathbf{\tilde{\bar{E}}} \Big[\exp\Big(\beta\int_t^T |\hat{u}_s -
u_s|^2 ds\Big)\Big].
\label{likelihood-exp}
\end{align}
Roughly speaking, the last equation indicates that the relative error
(\ref{relative-error}) 
of the importance sampling estimator associated to a general control $u$ depends on 
the difference between control $u$ and the optimal control $\hat{u}$. This
relation will be further used in Section~\ref{sec-proof} to prove the upper bound for
the relative error of importance sampling estimator. \\

\section{Importance sampling of multiscale diffusions}
\label{sec-main}

Our main result in this paper concerns dynamics with two time scales.
Specifically, we consider the case when the state variable $z \in
\mathbb{R}^n$ can be split into a slow variable $x \in \mathbb{R}^k$ and
a fast variable $y \in \mathbb{R}^l$, i.e. $z=(x,y), \,k + l = n$, and 
we assume that (\ref{dynamics-1}) is of the form 
\begin{align}
  \begin{split}
    dx_s &= f(x_s,y_s) ds + \beta^{-1/2} \alpha_1(x_s,y_s) dw_s^1\\
    dy_s &= \frac{1}{\epsilon} g(x_s,y_s) ds+
   \beta^{-1/2} \frac{1}{\sqrt{\epsilon}}\alpha_2(x_s,y_s) dw_s^2 
\end{split}
\label{averaging-dynamics}
\end{align}
where $f\colon\mathbb{R}^n\rightarrow \mathbb{R}^k$, $g\colon
\mathbb{R}^n\rightarrow \mathbb{R}^l$ are smooth vector fields, 
 $\alpha_1\colon \mathbb{R}^n\rightarrow
\bR^{k\times m_1}$, $\alpha_2\colon \mathbb{R}^n\rightarrow \bR^{l\times
m_2}$  
are smooth noise coefficients and $w_s^1\in\bR^{m_1}$, $w_s^2\in\bR^{m_2}$
 are independent Wiener processes with $m_1, m_2 > 0$. The parameter $\epsilon \ll 1$ describes the
time-scale separation between processes $x_s$ and $y_s$.

Let $x\in \mathbb{R}^k$ be given and suppose that the fast subsystem 
\begin{align}
    dy_s &= \frac{1}{\epsilon} g(x,y_s) ds+
 \beta^{-1/2} \frac{1}{\sqrt{\epsilon}}\alpha_2(x,y_s) dw_s^2 ,\qquad y_0 = y
 \in \mathbb{R}^l\,,
  \label{frozen-fast-dynamics}
\end{align}
 is ergodic with a unique invariant measure whose density \wrt 
 Lebesgue measure is denoted by  $\rho_x(y)$ (see Appendix~\ref{app-2} for more details). Then it is well known that when $\epsilon \rightarrow 0$, under some mild conditions on the coefficients, the slow component of  
(\ref{averaging-dynamics}) converges in probability to the averaged
dynamics \cite{freidlin2012random,khasminskii,book_PS08,liu2010}
\begin{align}
\begin{split}
  d \widetilde{x}_s &= \widetilde{f}(\widetilde{x}_s) ds +
  \beta^{-1/2}\widetilde\alpha(\widetilde{x}_s) dw_s ,
  \quad t \le s \le T \\
  \widetilde{x}_t &= x\,,
 \end{split}
\label{dynamics-averaged}
\end{align}
where for every $x \in \mathbb{R}^k$, we have  
\begin{align}
  \widetilde{f}(x) = \int_{\mathbb{R}^l} f(x,y) \rho_x(y) \,dy, \qquad
  \widetilde{\alpha}(x)\widetilde{\alpha}(x)^T = \int_{\mathbb{R}^l} \alpha_1(x,y)\alpha_1(x,y)^T
\rho_x(y) \,dy.
\label{averaging-coeff}
\end{align}
Further define 
\begin{align}
  \widetilde{h}(x) = \int_{\mathbb{R}^l} h(x,y) \rho_x(y) \,dy\,,
  \label{averaging-h}
\end{align}
and consider the averaged value function 
\begin{align}
U_0(t,x) = \inf\limits_{u} \mathbf{\bar{E}} \Big\{\int_t^T
  \widetilde{h}(\widetilde{x}^u_s) \,ds + \frac{1}{2}\int_t^T
|u_s|^2 ds \Big\},
\label{oc-reduced}
\end{align}
where $\widetilde{x}^u_s \in \mathbb{R}^k$ is the solution of 
\begin{align}
\begin{split}
  d \widetilde{x}^u_s &= \widetilde{f}(\widetilde{x}^u_s) ds -
  \widetilde{\alpha}(\widetilde{x}^u_s) u_s ds +
  \beta^{-1/2}\widetilde\alpha(\widetilde{x}^u_s) dw_s , \quad t \le s \le T \\
  \widetilde{x}^u_t &= x\,.
 \end{split}
\label{dynamics-averaged-controlled}
\end{align}

The idea of using suboptimal controls for importance sampling of multiscale systems such as (\ref{averaging-dynamics}) is to use the solution of the limiting control problem (\ref{oc-reduced})--(\ref{dynamics-averaged-controlled}) to construct an asymptotically optimal control of the form
 \begin{align}
\hat{u}^{0}_{s} = \left(\alpha_{1}^T(x^{u}_s,y^{u}_{s})\nabla_x
U_0(x^{u}_s), 0\right)\,,
\label{sub-control-average}
\end{align}
for the full system. Comparing (\ref{sub-control-average}) to the optimal control force (\ref{optimal-u}),
  this means that we construct the control for the slow variable by using the
  averaged value function $U_0$ in (\ref{oc-reduced}) and leave the fast variable uncontrolled. 
  Notice that control (\ref{sub-control-average}) has also been suggested in
  \cite{ip-kostas1} for more general dynamics with a general subsolution of
  the HJB equation.
\begin{remark}
Another variant of a suboptimal control would be 
 \begin{align}
\hat{u}^{0}_{s} = \left(\widetilde{\alpha}^T(x^{u}_s)\nabla_x U_0(x^{u}_s), 0\right),
\label{sub-control-average-2}
\end{align}
where the $x$-component is the optimal control of the averaged system
(\ref{oc-reduced})--(\ref{dynamics-averaged-controlled}). The advantage of
using (\ref{sub-control-average-2}) rather than (\ref{sub-control-average}) is
that the fast variables do not need to be explicitly known or observable in
order to control  the system. In the following we will assume that $\alpha_{1}$ is independent of $y$, in which case (\ref{sub-control-average}) and (\ref{sub-control-average-2}) coincide (see Assumption \ref{assumption-3}). 
\end{remark}

\subsection{Main result}
\label{sub-sec-main-1}

\noindent
Our main assumptions are as follows. 

\begin{assumption}
  $f, g, h, \alpha_1, \alpha_2$ are $C^2$ functions, with derivatives that are uniformly bounded by a 
  constant $C>0$. $\alpha_1, \alpha_2$ and $h$ are bounded. Furthermore, there
  exist constant $C_1>0$, such that 
  \begin{align*}
     \zeta^T \alpha_2(x,y)\alpha_2(x,y)^T \zeta \ge C_1 |\zeta|^2\,,
  \end{align*}
 $\forall x \in \mathbb{R}^k, \zeta,y \in \mathbb{R}^l$.
    \label{assumption-1}
\end{assumption}
\begin{assumption} $\exists \lambda > 0$, such that  $\forall x \in
  \mathbb{R}^k, y_1, y_2 \in \mathbb{R}^l$, we have 
  \begin{align}
    \langle g(x, y_1) - g(x, y_2), y_1 - y_2\rangle + \frac{3}{\beta}\|\alpha_2(x,y_1) -
    \alpha_2(x,y_2)\|^2 \le -\lambda |y_1 - y_2|^2, \quad 
  \end{align}
  where $\|\cdot\|$ denotes the Frobenius norm.
    \label{assumption-2}
\end{assumption}
\begin{assumption}
  $\alpha_1$ and $h$ do not depend on $y$.
  \label{assumption-3}
\end{assumption}

\begin{remark}
  \begin{enumerate}
    \item
      Assumption~\ref{assumption-1} implies the coefficients are Lipschitz
      functions. In particular, it holds that 
      $|f(x,y)| \le C(1 + |x| + |y|)$, $\forall x \in \mathbb{R}^k,
      y\in\mathbb{R}^l$ (similarly for the other coefficients).
    \item
      For $\widetilde{f}$ given by (\ref{averaging-coeff}), 
      Lemma~\ref{lemma-stationary} in
      Appendix~\ref{app-2} implies that $\widetilde{f}$ is Lipschitz
      continuous. Unlike in \cite{liu2010}, we do not assume that $f$ is bounded.
    \item
  Assumption~\ref{assumption-2} guarantees that the fast dynamics is
  exponentially mixing. As we study the asymptotic solution of (\ref{averaging-dynamics}) as $\epsilon \rightarrow 0$ at fixed noise intensity, the inverse temperature $\beta$ can be absorbed into the coefficients $\alpha_1$, $\alpha_2$ and $h$. In
  Section~\ref{sec-proof}, we will therefore assume $\beta = 1$, in which case
  Assumption~\ref{assumption-2} implies that 
  \begin{align}\label{mixing}
    \langle \nabla_{y}g\, \xi, \xi \rangle  + 3\|\nabla_{y}\alpha_2\, \xi\|^2 \le -\lambda
    |\xi|^2, \quad \forall y, \xi \in \mathbb{R}^l, x \in \mathbb{R}^k\,,
  \end{align}
  where $\nabla_y\alpha_2 \xi$ is an $l \times m_2$ matrix with components 
  \begin{align}
    \big(\nabla_y\alpha_2 \xi\big)_{ij} = \sum_{r=1}^l
    \frac{\partial (\alpha_2)_{ij}}{\partial y_r} \xi_r\,,\qquad 1 \le i \le
    l\,, \quad 1 \le j \le m_2\,.
  \end{align}
    Combining this with Assumption~\ref{assumption-1}, we have 
  \begin{align}
      & \langle g(x,y), y\rangle + \frac{3}{2}\|\alpha_2(x,y)\|^2 \notag \\
    \le &
    \langle g(x,y) - g(x,0), y\rangle + \langle g(x,0), y\rangle +
    3\|\alpha_2(x,y) - \alpha_2(x,0)\|^2 + 3\|\alpha_2(x,0)\|^2\notag \\
    \le & -\frac{\lambda}{2}|y|^2 + C(|x|^2 + 1)\,,\qquad \forall x \in
    \mathbb{R}^k, y \in \mathbb{R}^l\,. \label{mixing-1}
  \end{align}
  The constant $3$ in (\ref{mixing}) is not optimal, but it will simplify matters later on.
  \end{enumerate}
  \label{rmk-1}
\end{remark}

Now we are ready to state our main result, whose proof will be given in
Section~\ref{sec-proof}.
\begin{theorem}
  Suppose Assumptions \ref{assumption-1}--\ref{assumption-3} hold, and consider
  the importance sampling method for computing (\ref{exp-I}) with dynamics
  (\ref{averaging-dynamics}) and control $\hat{u}^0$ as given by
  (\ref{sub-control-average}). Then, for $\epsilon \ll 1$, the relative error
  (\ref{relative-error}) of the importance sampling 
  estimator satisfies
  \begin{align}
    \mbox{RE}_{\hat{u}^0}(I) \le C\epsilon^{\frac{1}{8}}, \notag
  \end{align}
  where the constant $C > 0$ is independent of $\epsilon$.
  \label{main_thm}
\end{theorem}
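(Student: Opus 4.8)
The starting point is the exact expression for the relative error already derived in Section~\ref{sec-setup}. Taking the control $u=\hat u^{0}$ in (\ref{zt-ratio-exp})--(\ref{likelihood-exp}) and using (\ref{variance_In})--(\ref{relative-error}) yields
\[
\mbox{RE}_{\hat u^{0}}(I)^{2}=\mathbf{\tilde{\bar E}}\Big[\exp\Big(\beta\int_{t}^{T}|\hat u_{s}-\hat u^{0}_{s}|^{2}\,ds\Big)\Big]-1\,,
\]
where $\hat u$ is the optimal feedback control (\ref{optimal-u}) attached to the value function $U$ of (\ref{value-fun})--(\ref{hjb}) for the full system (\ref{averaging-dynamics}). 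Since Assumption~\ref{assumption-1} lets us absorb $\beta$ into the coefficients, I set $\beta=1$ (this only changes the final constant). The whole theorem then reduces to the \emph{uniform} estimate
\[
\|\hat u-\hat u^{0}\|_{\infty}:=\sup_{s\in[t,T],\,z\in\mathbb{R}^{n}}|\hat u(s,z)-\hat u^{0}(s,z)|\le C\epsilon^{1/8}\,,
\]
because this forces $\int_{t}^{T}|\hat u_{s}-\hat u^{0}_{s}|^{2}ds\le C(T-t)\epsilon^{1/4}$ along \emph{every} trajectory, and $e^{x}-1\le 2x$ for small $x\ge 0$ then gives $\mbox{RE}_{\hat u^{0}}(I)\le C\epsilon^{1/8}$ (and, incidentally, makes any detailed analysis of the auxiliary measure $\mathbf{\tilde{\bar P}}$ unnecessary for the bound itself).

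By Assumption~\ref{assumption-3} and the block structure of the noise in (\ref{averaging-dynamics}), the optimal control splits as $\hat u=\big(\alpha_{1}^{T}(x)\nabla_{x}U,\ \epsilon^{-1/2}\alpha_{2}^{T}(x,y)\nabla_{y}U\big)$, so
\[
|\hat u-\hat u^{0}|^{2}=\big|\alpha_{1}^{T}(\nabla_{x}U-\nabla_{x}U_{0})\big|^{2}+\epsilon^{-1}\big|\alpha_{2}^{T}\nabla_{y}U\big|^{2}\,,
\]
and it suffices to establish, with all constants independent of $\epsilon$: (i) uniform a priori bounds $\|U\|_{\infty},\|\nabla_{x}U\|_{\infty},\|\nabla_{x}^{2}U\|_{\infty}\le C$ and likewise for $U_{0}$, from $0\le h$ bounded, the Lipschitz bounds of Assumption~\ref{assumption-1}, and coupling arguments / standard parabolic regularity for the HJB equations; (ii) the value function estimate $\|U-U_{0}\|_{\infty}\le C\epsilon^{1/4}$; (iii) the fast gradient estimate $\|\nabla_{y}U\|_{\infty}\le C\epsilon$. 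Indeed, (i), (ii) and the interpolation inequality $\|\nabla_{x}\phi\|_{\infty}\le C\|\phi\|_{\infty}^{1/2}\|\nabla_{x}^{2}\phi\|_{\infty}^{1/2}$, applied with $\phi(s,\cdot)=U(s,\cdot,y)-U_{0}(s,\cdot)$ for each fixed $y$, give $\|\nabla_{x}U-\nabla_{x}U_{0}\|_{\infty}\le C\epsilon^{1/8}$, while (iii) and the boundedness of $\alpha_{2}$ give $\epsilon^{-1}\|\alpha_{2}^{T}\nabla_{y}U\|_{\infty}^{2}\le C\epsilon$; together with the boundedness of $\alpha_{1}$ this is exactly the required bound.

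Step (iii) is the "easy" one and exhibits the mechanism cleanly: the $\mathcal{O}(\epsilon^{-1})$ structure of (\ref{hjb}) in the fast block, combined with the uniform ellipticity $\alpha_{2}\alpha_{2}^{T}\ge C_{1}I$ (Assumption~\ref{assumption-1}) and the strong dissipativity (\ref{mixing})--(\ref{mixing-1}) of the frozen fast generator (Assumption~\ref{assumption-2}), forces $U$ to depend only weakly on $y$. Concretely one couples two copies of the optimally controlled system (\ref{averaging-dynamics}) sharing the same slow initial datum $x$, the same Brownian paths and the same feedback, but starting the fast variable at $y_{1}$ and $y_{2}$: Assumption~\ref{assumption-2} makes the fast components contract at rate $\sim\lambda/\epsilon$, which by a Gronwall estimate keeps the slow components within $C\epsilon|y_{1}-y_{2}|$ of each other on $[t,T]$, and since $h=\widetilde h$ is $y$-independent and Lipschitz the corresponding optimal costs differ by at most $C\epsilon|y_{1}-y_{2}|$; hence $\|\nabla_{y}U\|_{\infty}\le C\epsilon$ (far more than the $o(\epsilon^{1/2})$ actually needed).

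\textbf{Main obstacle.} The crux is step (ii): the quantitative convergence of the value function of the multiscale control problem to that of the averaged problem (\ref{oc-reduced})--(\ref{dynamics-averaged-controlled}), with an explicit rate in $\epsilon$. The route is to compare the two problems through their variational representations --- inserting near-optimal controls of one into the other, and using again that $h=\widetilde h$ is $y$-independent and Lipschitz --- which reduces $\|U-U_{0}\|_{\infty}$ to a bound of the form $C\,\mathbf{E}\!\int_{t}^{T}|x^{u}_{s}-\widetilde{x}^{u}_{s}|\,ds$ for the two feedback controls in question, i.e. to a \emph{strong} (pathwise) approximation of the slow component of (\ref{averaging-dynamics}) by the averaged dynamics (\ref{dynamics-averaged-controlled}). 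This is precisely the content of the by-product Theorems~\ref{main-result-2}--\ref{main-result-4} in Section~\ref{sec-proof}, and it is where essentially all the work goes: such a strong estimate --- rather than the usual weak convergence --- is available only because of the restriction to the special form (\ref{averaging-dynamics}) together with Assumption~\ref{assumption-3}, which makes the slow diffusion coefficient $\alpha_{1}$ independent of the fast variable and thus removes the standard obstruction to strong convergence, while Assumption~\ref{assumption-2} supplies the exponential mixing of the frozen fast process (\ref{frozen-fast-dynamics}) that controls the ergodic averaging error, yielding $\mathbf{E}|x^{u}_{s}-\widetilde{x}^{u}_{s}|\le C\epsilon^{1/4}$ uniformly in $s\in[t,T]$. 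This $\epsilon^{1/4}$, degraded by one square root in the interpolation step that produces $\|\nabla_{x}U-\nabla_{x}U_{0}\|_{\infty}$ and by a further square root in passing from $\mbox{RE}_{\hat u^{0}}(I)^{2}$ to $\mbox{RE}_{\hat u^{0}}(I)$, is what turns into the final exponent $1/8$. Besides this strong-approximation rate, the other genuinely technical point is the uniform-in-$\epsilon$ a priori regularity of step (i) under the comparatively weak $C^{2}$-with-bounded-derivatives hypotheses of Assumption~\ref{assumption-1}.
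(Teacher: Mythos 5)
Your reduction of $\mbox{RE}_{\hat u^{0}}(I)$ to the likelihood-ratio expectation (\ref{likelihood-exp}) and your identification of the strong pathwise approximation of the slow variable as the crux are both correct and match the paper, but the proposal has two genuine gaps. The first is the claimed \emph{global} uniform bound $\|\hat u-\hat u^{0}\|_{\infty}\le C\epsilon^{1/8}$: this does not hold on all of $[t,T]\times\mathbb{R}^{n}$ under the paper's assumptions, because the constants in the strong-approximation estimates degrade as $|x_{0}|,|y_{0}|\to\infty$. This is explicit in the paper: Theorems~\ref{main-result-2}--\ref{main-result-4} assert uniformity only for initial data in a bounded set, and the stability bounds in Lemma~\ref{lemma-4th-stability} carry factors $(1+|x_{0}|^{4}+|y_{0}|^{4})$. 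So the step you dismiss as unnecessary --- further manipulation of the expectation under $\mathbf{\tilde{\bar P}}$ --- is in fact essential. The paper's proof of Theorem~\ref{main_thm} splits the exponent in (\ref{likelihood-exp}) according to whether the trajectory $(\tilde{\bar x}^{u}_{s},\tilde{\bar y}^{u}_{s})$ stays inside a ball of radius $R$, uses the local $\epsilon^{1/8}$-bound there and the global boundedness of both controls on the complement, and then controls the occupation time outside the ball by a Chebyshev estimate (Lemma~\ref{lemma-occupation-time}); choosing $\delta$ small and then $R$ large closes the argument. Without this localization your final inequality does not follow.

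The second gap is the interpolation step. The inequality $\|\nabla_{x}(U-U_{0})\|_{\infty}\lesssim\|U-U_{0}\|_{\infty}^{1/2}\|\nabla_{x}^{2}(U-U_{0})\|_{\infty}^{1/2}$ requires an $\epsilon$-uniform bound on $\|\nabla_{x}^{2}U^{\epsilon}\|_{\infty}$, which "standard parabolic regularity" cannot supply here: the HJB equation (\ref{hjb}) for (\ref{averaging-dynamics}) has $O(\epsilon^{-1})$ drift and diffusion coefficients in the fast block, so Schauder or $W^{2,p}$ estimates produce constants blowing up as $\epsilon\to0$. The paper avoids second derivatives altogether by differentiating the Feynman--Kac representation (formulas (\ref{derivative-formula})), expressing $\nabla_{x}\phi^{\epsilon}$ and $\nabla_{x}\phi_{0}$ through the first-variation processes $x_{s,x_{i}}$, $\widetilde x_{s,x_{i}}$, and then comparing these in $L^{2}$ (Theorem~\ref{main-result-3}); the exponent $1/8$ arises from $\bE|x_{s}-\widetilde x_{s}|^{4}\le C\epsilon^{1/2}$ and $\bE|x_{s,x_{i}}-\widetilde x_{s,x_{i}}|^{2}\le C\epsilon^{1/4}$ via Jensen, not from interpolating $\|U-U_{0}\|_{\infty}\le C\epsilon^{1/4}$ (a rate the paper's self-contained chain does not establish; locally it yields $\|U-U_{0}\|_{\infty}\le C\epsilon^{1/8}$). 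Your step (iii) is sound in spirit and essentially equivalent to Theorem~\ref{main-result-1}, though the paper obtains it through the derivative processes $x_{s,y_{i}},y_{s,y_{i}}$ rather than a coupling argument. If you want to pursue the interpolation route, you would need to prove $\epsilon$-uniform bounds on $\nabla_x^{2}U^{\epsilon}$ (e.g.\ by differentiating (\ref{derivative-formula}) once more and estimating second-variation processes), which is a substantial additional burden the paper deliberately sidesteps.
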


\subsection{Formal expansion by asymptotic analysis}
\label{sub-sec-main-2}

\noindent
The proof of Theorem~\ref{main_thm} in Section~\ref{sec-proof} is relatively long and technical, which is why we shall give a 
formal derivation of (\ref{sub-control-average}) first. The idea is to identify the suboptimal control $\hat{u}^{0}$ 
as the leading term of the optimal control using formal asymptotic expansions \cite{asymptotic_analysis,book_PS08}. 
To this end, let $U^{\epsilon}$ denote the solution of (\ref{hjb}), for which we seek an asymptotic expansion in 
powers of $\epsilon$. Further let $\phi^{\epsilon}(t,x,y) = \exp(-\beta U^{\epsilon})$. From the dual relation
(\ref{dual-relation}), we know that $\phi^{\epsilon}$ is the expectation (\ref{exp-I}) we want to
compute. By the Feynman-Kac formula, we have
\begin{align}
\begin{split}
&\frac{\partial \phi^{\epsilon}}{\partial t} + \cL
\phi^{\epsilon} - \beta h\phi^{\epsilon} = 0\,, \quad 0 \le t \le T
\\
&\phi^{\epsilon}(T,x,y) = 1\,,
\end{split}
\label{linear-compact-hjb}
\end{align}
where $\cL =\epsilon^{-1} \cL_0 + \cL_1$ is the infinitesimal generator of
process (\ref{averaging-dynamics}), with 
\begin{align}
  \begin{split}
   \cL_0 & = g \cdot \nabla_y + \frac{1}{2\beta} \alpha_2\alpha_2^T\colon \nabla_y^2\\
\cL_1 & = f \cdot \nabla_x + \frac{1}{2\beta}\alpha_1\alpha_1^T\colon\nabla^2_x\,.
\end{split}
\label{generator}
\end{align}

Now consider the expansion $\phi^{\epsilon} = \phi_0 +
\epsilon \phi_1 + \ldots$ of $\phi^{\epsilon}$ in powers of $\epsilon$. Plugging it into (\ref{linear-compact-hjb}) and
comparing different powers of $\epsilon$, we obtain : 
\begin{align}
&\frac{\partial \phi_0}{\partial t} + \cL_0 \phi_1 + \cL_1 \phi_0 - \beta h\phi_0 = 0, \label{coeff-1} \\
& \cL_0\phi_0 = 0\,. \label{coeff-2} 
\end{align}
By the assumption that the fast dynamics (\ref{frozen-fast-dynamics}) are ergodic for every $x \in \mathbb{R}^k$ with 
unique invariant density $\rho_x(y)$, it follows that $\rho_x(y) > 0$ is the unique solution to the linear equation 
$\cL_0^* \rho_x = 0$ with $\int_{\mathbb{R}^l} \rho_x(y) dy = 1$. Here
$\cL_0^*$ is the adjoint operator of $\cL_0$ with respect to the standard 
scalar product in the space $L^{2}(\mathbb{R}^l)$. Hence we can conclude from (\ref{coeff-2}) that $\phi_0
= \phi_0(t,x)$ is independent of $y$. Integrating both sides of (\ref{coeff-1}) against $\rho_x(y)$, we obtain a closed equation for $\phi_0$: 
\begin{align}
\frac{\partial \phi_0}{\partial t} + \widetilde{\cL} \phi_0 - \beta \widetilde{h}\phi_0 = 0 
\label{averaged-fk}
\end{align}
with
\begin{align}
\begin{split}
&\widetilde{\cL} = \widetilde{f}(x) \cdot \nabla_x +
\frac{\widetilde{\alpha}(x)\widetilde{\alpha}(x)^T}{2\beta}:\nabla^2_x\,,
\end{split}
\label{reduced-coeff}
\end{align}
and $\widetilde{h}, \widetilde{f}, \widetilde{\alpha}$ as given by  
(\ref{averaging-coeff}) and (\ref{averaging-h}).

Notice that $\widetilde{\cL}$ is the infinitesimal generator of the averaged 
dynamics (\ref{dynamics-averaged}). Again by the Feynman-Kac formula, the solution to (\ref{averaged-fk}) is 
recognized as the conditional expectation  
\begin{align}
\phi_0(t,x) = \mathbf{E} \Big[\exp \Big(-\beta\int_t^T
\widetilde{h}(\widetilde{x}_s)\,
ds\Big)~\Big|~\widetilde{x}_t = x\Big]\,
\label{expectation-averaged}
\end{align}
of the averaged path functional over all realizations of the averaged dynamics (\ref{dynamics-averaged}) starting at 
$\widetilde{x}_{t}=x$.  Recalling $U^\epsilon = -\beta^{-1}\ln \phi^\epsilon$, it follows that $U^{\epsilon}$ has the expansion 
\begin{align}
  U^\epsilon = -\beta^{-1} \ln (\phi_0 + \epsilon \phi_1 + o(\epsilon)) =
 -\beta^{-1} \ln \phi_0
 - \beta^{-1} \frac{\phi_1}{\phi_0}\epsilon + o(\epsilon).
 \label{u-expansion}
\end{align}
Combining (\ref{u-expansion}) with (\ref{expectation-averaged})
and the dual relation (\ref{dual-relation}), we conclude that $U_0$ in
(\ref{oc-reduced}) satisfies $U_0 = -\beta^{-1}\ln \phi_0$ and is the leading
term of $U^\epsilon$ in expansion (\ref{u-expansion}). Finding the corresponding expression for the optimal control is now straightforward: Setting $\hat{u}_{s} =
(\hat{u}_{s,1}, \hat{u}_{s,2})\in \mathbb{R}^{m_1} \times \mathbb{R}^{m_2}$, the relation (\ref{optimal-u}) between the
optimal feedback control and the value function yields  
\begin{align}
\begin{split}
  &\hat{u}_{s,1} =
  \alpha_1^T\nabla_x U_0 + \mathcal{O}(\epsilon) = -\beta^{-1}
  \frac{\alpha_1^T\nabla_x \phi_0}{\phi_0} + \mathcal{O}(\epsilon) , \\
&\hat{u}_{s,2} = \frac{\alpha_2^T}{\sqrt{\epsilon}} \nabla_y U^\epsilon =
  \mathcal{O}(\epsilon^{\frac{1}{2}})\,,
\end{split}
\label{lead-control}
\end{align}
where all functions are evaluated at $(s,x_{s}^{\hat{u}},y_{s}^{\hat{u}})$. 

The last equation shows that (\ref{sub-control-average}) appears to be the leading term of the 
optimal control force as $\epsilon \rightarrow 0$. Reiterating the argument given in
Section~\ref{sec-setup}, we expect (\ref{sub-control-average}) to be a 
reasonably good approximation of the exact control force that gives rise to sufficiently accurate importance sampling estimators of 
(\ref{exp-I}) in the asymptotic regime $\epsilon \ll 1$.

As for the corresponding numerical algorithm, our derivations suggest that one possible strategy for finding 
good control forces for importance sampling is to first compute $U_0$ from
(\ref{oc-reduced}) or (\ref{expectation-averaged}), which corresponds to a
low-dimensional stochastic optimal control problem, and then to construct the control
force as in (\ref{sub-control-average}) to perform importance sampling. The numerical strategy 
will be discussed in Section~\ref{sec-examples}, along with some details regarding the numerical implementation.

\begin{remark}
  A closely related variant of the slow-fast dynamics
  (\ref{averaging-dynamics}) is homogenization problems that exhibit more than two time scales~\cite{book_PS08}.
  Although a rigorous treatment of multiscale diffusions with three or more time scales is beyond the scope of this 
  work, we stress that the formal asymptotic argument carries over directly. 
  See \cite{ip-dupuis-multiscale,ip-kostas1,ip-kostas3} for large deviations
  and importance sampling studies of related dynamics.
  \label{rmk-homogenization}
\end{remark}

\section{Numerical example}
\label{sec-examples}

In this section, we study a numerical example and discuss some algorithmic issues 
related to the calculation of the suboptimal control force
(\ref{sub-control-average}) as proposed in Section~\ref{sec-main}. 
The dynamics we considered here is described by the two-dimensional SDE
\begin{align}
  \begin{split}
    dx_s &= -\frac{\partial V(x_s,y_s)}{\partial x} ds + \beta^{-1/2} dw^1_s \\
  dy_s &= -\frac{1}{\epsilon}\frac{\partial V(x_s,y_s)}{\partial y} ds +
    \beta^{-1/2} \frac{1}{\sqrt{\epsilon}} dw^2_s\,, 
\end{split}
\label{dynamics-ex1}
\end{align}
where $(x_s, y_s) \in \mathbb{R}^2$, $w_s = (w_s^1, w_s^2)$ is a two-dimensional
Wiener process and $\beta, \epsilon > 0$. The potential $V(x,y) = V_1(x) + V_2(x,y)$ is defined as
\begin{align}
  \begin{split}
  V_1(x) =& \frac{1}{2}\big(1 - \eta(x) - \eta(-x)\big) \cos\Big(\frac{4\pi
  x}{5}\Big) + 3 \eta(x) (x-1)^2 + 3\eta(-x) (x+1)^2, \\
  V_2(x,y) =& \frac{1}{2} (x - y)^2 \,,
  \end{split}
  \label{pot-def}
\end{align}
with $\eta(x) = e^{-\frac{1}{x}}$ if $x>0$, and $\eta(x) = 0$ otherwise.
The function $V_1(x)$ is a smooth bistable potential that has two ``wells'' centered around $x=-1$ and
$x=1$. 
As in (\ref{exp-I}), we aim at computing the expectation
\begin{align}
  I = \mathbf{E}\!\left[\exp\left(-\beta\int_0^T h(x_s) ds\right)~\bigg|~ x_0 =
  -1,\,
  y_0 = 0 \right],
  \label{target-ex1}
\end{align}
where 
\begin{align}
h(x) = 
\eta\Big(\frac{x+2}{w}\Big) \eta\Big(\frac{4-x}{w}\Big) (x-1)^2 + 10\Big[2 -
\eta\Big(\frac{x+2}{w}\Big) -\eta\Big(\frac{4-x}{w}\Big)\Big] \,,
\label{fun-h-def}
\end{align}
with parameter $w=0.02$.
The graphs of the functions $\eta, V_1$ and $h$ are shown in Figure~\ref{fig-ex1-data}.
Notice that the auxiliary function $\eta$ is introduced in (\ref{pot-def}) and
(\ref{fun-h-def}) in order to guarantee that
Assumption~\ref{assumption-1}-\ref{assumption-3} of Theorem~\ref{main_thm} in Section~\ref{sec-main} are satisfied. More discussions on these
assumptions can be found in the section of Introduction and Conclusions.

Using the specific form of potential $V$, we can explicitly compute the invariant measure of the fast dynamics $y_s$ in (\ref{dynamics-ex1}), which for each fixed $x \in \mathbb{R}$ has the Lebesgue density 
\begin{align}
  \rho_x(y) \propto e^{-\beta(x-y)^2}\,.
\end{align}
Recalling the discussion in Section~\ref{sec-main}, especially (\ref{dynamics-averaged}) and (\ref{averaging-coeff}), 
we conclude that the averaged dynamics is a one-dimensional diffusion in a double well
potential 
\begin{align}
  d\widetilde{x}_s &= -V'_1(\widetilde{x}_s) ds + \beta^{-1/2} dw_s\,,
  \label{ex1-reduced}
\end{align} 
where the potential $V_1$ is given in (\ref{pot-def}) and $w_s$ is a one-dimensional Wiener process.

Before we proceed, we shall briefly discuss the potential difficulties to
compute (\ref{target-ex1}) with the standard Monte Carlo method, which is
mainly due to the inherent metastability of the system, even for moderate
values of $\beta$. To this end, notice that, in the path space, the exponential integrand in (\ref{target-ex1})
is peaked around trajectories which spend a large portion of time at the minimum
of $h$, which is located around $x = 1$ (Figure~\ref{fig-ex1-h}). 
But in order to get close to the state $x = 1$, trajectories starting from $x_0 = -1$ need to cross the energy barrier
$\Delta V_1(\approx V_1(0) - V_1(-1))$ of $V_1$ (Figure~\ref{fig-ex1-pot}). 
The probability of these barrier-crossing trajectories is roughly of order $\exp(-\beta
\Delta V_{1})$ when $\beta \Delta V_1$ is large. Combining these facts, we 
expect that the rare barrier crossing events play an important role for
computing (\ref{target-ex1}). And standard Monte Carlo method will be inefficient in such a situation due to
insufficient sampling of these rare events (cf.~the discussion in Section~\ref{sec-intro}).\\

\textbf{Computation of the suboptimal estimator based on the averaged equation. }
Now let us consider the method outlined in Subsection \ref{sub-sec-main-1}. 
In accordance with (\ref{averaged-fk}), the conditional expectation $\phi_0$ solves the linear backward evolution equation
\begin{align}
  \begin{split}
  &\frac{\partial \phi_0}{\partial t} + \widetilde{\mathcal{L}} \phi_0 - \beta
    \widetilde{h} \phi_0 = 0 \\
  &\phi_0(T,x) = 1,
\end{split}
\label{averaged-fk-ex1}
\end{align}
with 
\begin{align}
  \widetilde{\cL} = - V_1' \frac{\partial}{\partial x} +  \frac{1}{2\beta}
  \frac{\partial^2}{\partial^2 x}, \quad
\widetilde{h}(x) = h(x)\,.
\end{align}
The equation for $\phi_{0}$ is one-dimensional (in space), and can be solved
by standard grid-based method. For instance, using Rothe's method, we can first discretize (\ref{averaged-fk-ex1}) in time, which yields 
\begin{align}
  \Big(\frac{1}{\Delta t} - \widetilde{\mathcal{L}}\Big) \phi_0^{j} = \Big(\frac{1}{\Delta t} -
  \beta h\Big) \phi_0^{j + 1}\,,\quad j = 0, 1,\cdots, m - 1
  \label{discrete-fk-ex1}
\end{align}
where $\phi_0^{j}$ denotes the approximation of $\phi_0$ at time $t_j =
j\Delta t$, $j = 0, 1, \cdots, m$ with time step size $\Delta t = T/m$. Equation 
(\ref{discrete-fk-ex1}) is then further discretized in space using the  
structure-preserving finite volume method described in \cite{Latorre2010}.
Starting from $\phi_0^m\equiv 1$, we can obtain all $\phi_0^j$ for $j=m-1,
m-2, \cdots, 1$ by solving (\ref{discrete-fk-ex1}) backwardly.

After obtaining $\phi_0$, we can compute the feedback control force
(\ref{sub-control-average}) as 
\begin{align}
  \hat{u}^0_{s} = \left(-\beta^{-1}\frac{\partial_x
  \phi_0(s,x^u_s)}{\phi_0(s,x^u_s)}, 0\right),
  \label{control-ex1}
\end{align}
when system's state is at $(x_s^u, y_s^u)$ at time $s$.
Plugging the last expression into (\ref{dynamics-ex1}) then yields the
controlled dynamics (also see (\ref{dynamics-2}))
\begin{align}
  \begin{split}
    dx^u_s &= -\frac{\partial V(x^u_s,y^u_s)}{\partial x} ds 
    +\beta^{-1} \frac{\partial_x \phi_0(s,x^u_s)}{\phi_0(s,x^u_s)} ds + \beta^{-1/2} dw^1_s \\
  dy^u_s &= -\frac{1}{\epsilon}\frac{\partial V(x^u_s,y^u_s)}{\partial y} ds + \beta^{-1/2} \frac{1}{\sqrt{\epsilon}} dw^2_s, 
\end{split}
\label{dynamics-ex1-changed}
\end{align}
which will be employed to sample (\ref{target-ex1}) using the reweighted estimator (\ref{ip_mc}).

\begin{figure}
\centering
\begin{tabular}{ccc}
  \subfigure[]{\includegraphics[width=4.5cm]{./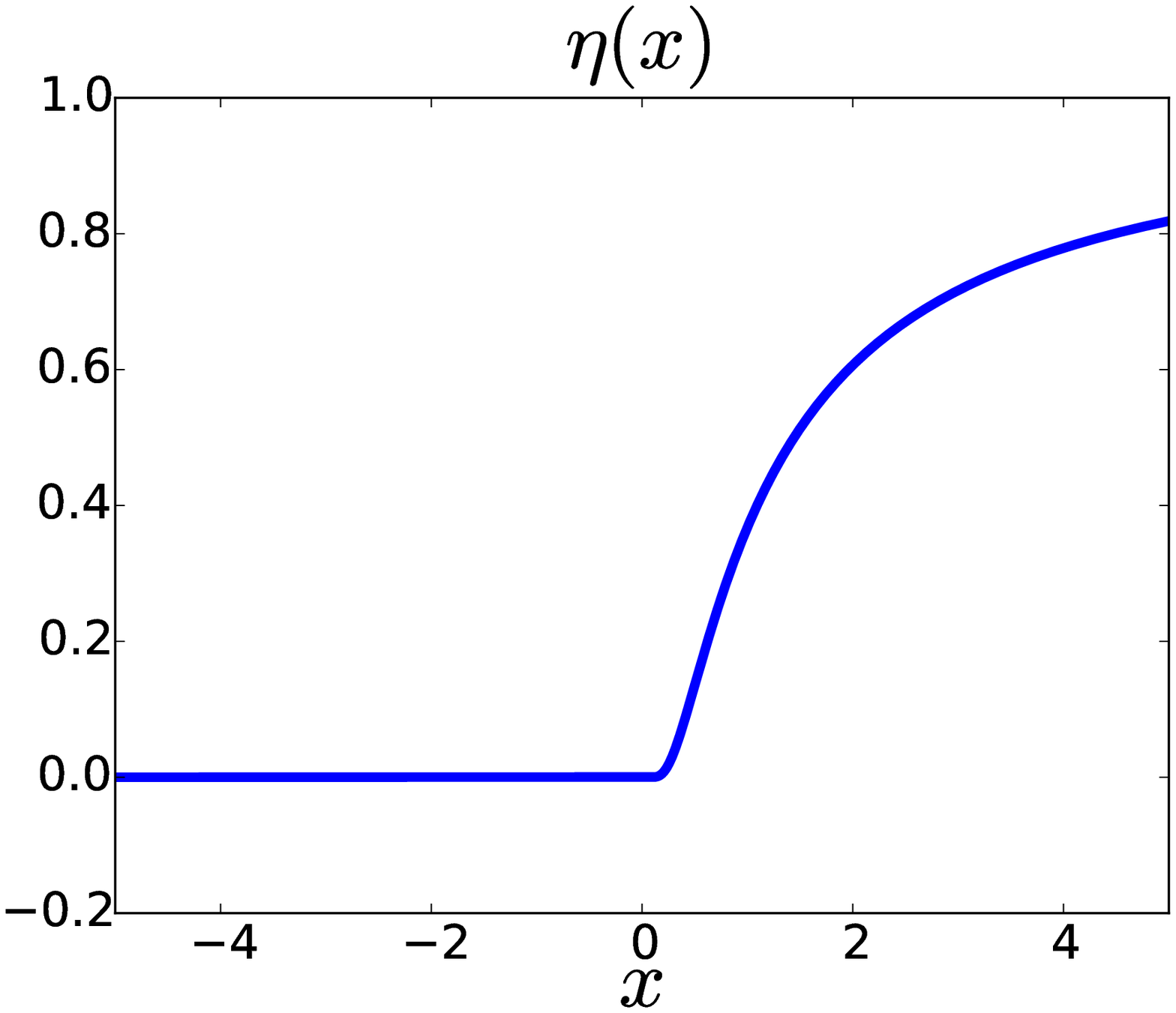}\label{fig-ex1-eta}}
  \subfigure[]{\includegraphics[width=4.5cm]{./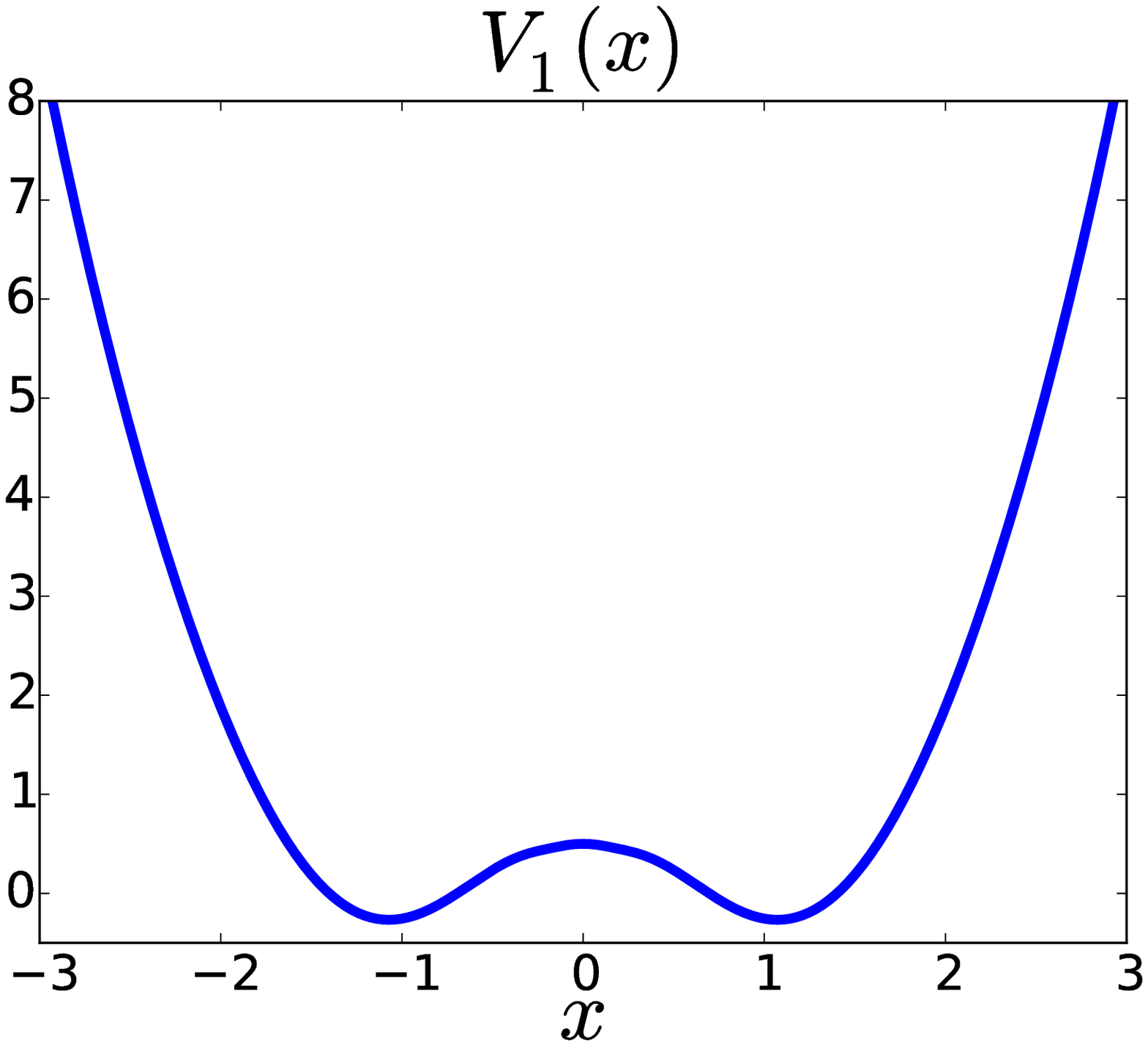}\label{fig-ex1-pot}}
  &
  \subfigure[]{\includegraphics[width=4.5cm]{./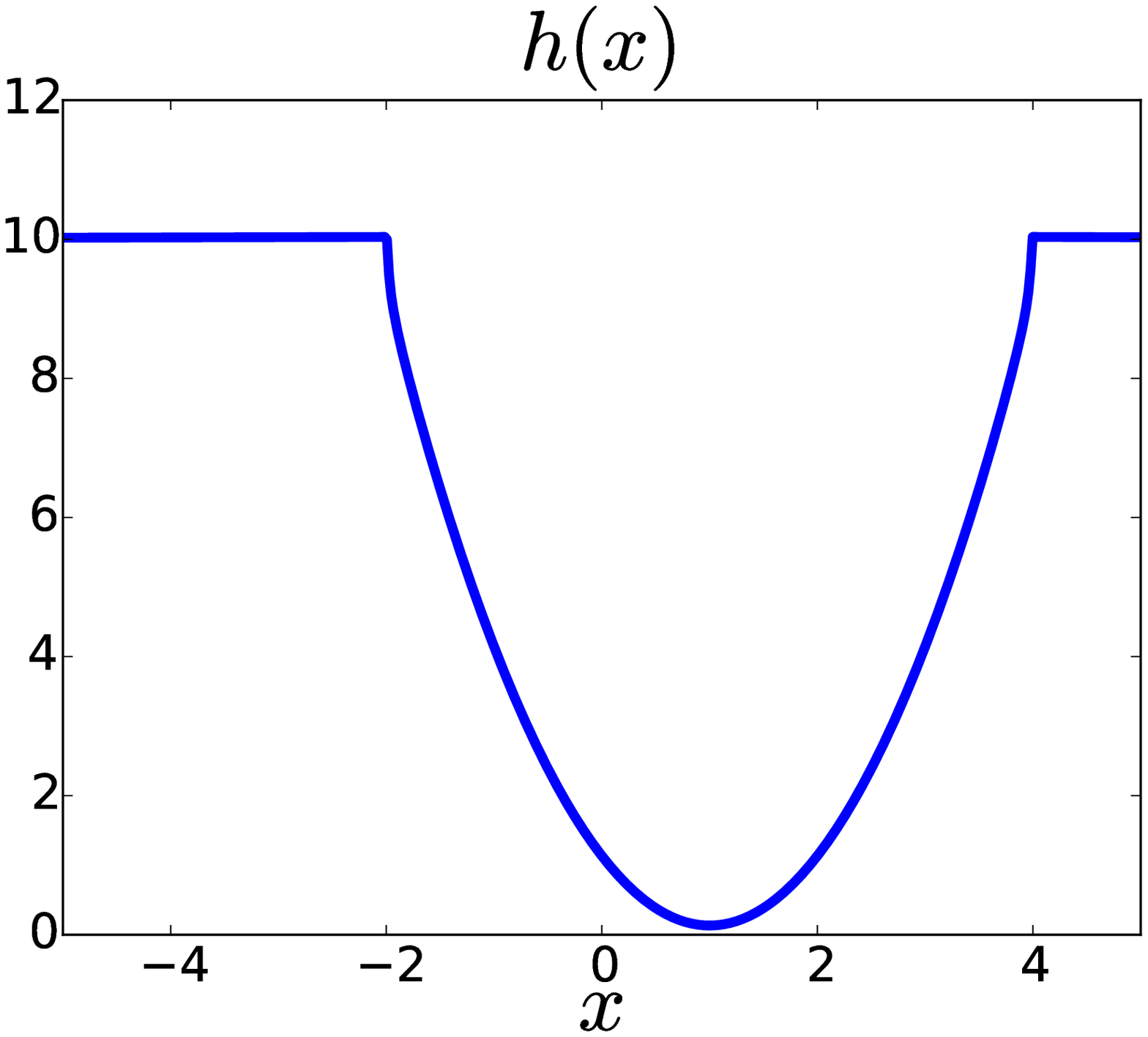}\label{fig-ex1-h}}
\end{tabular}
\caption{(a) Function $\eta(x)$ used to define potential $V_1$. (b) Double
  well potential $V_1(x)$. (c) Function $h$ in (\ref{target-ex1}).
\label{fig-ex1-data}
}
\end{figure}

\textbf{Numerical results. }
Now we turn to the numerical results.
Table \ref{tab-1} shows the numerical results of the Monte Carlo method with the above 
importance sampling strategy, i.e. (\ref{dynamics-ex1-changed}), which should be compared to Table \ref{tab-3} that shows the result of standard 
Monte Carlo method. For both the weighted and unweighted estimates, the sample size was
set to $N = 10^4$ trajectories of length $T=1$ with time step $\Delta t\le
10^{-7}$ that is chosen small enough to remove discretization bias. The
control (\ref{control-ex1}) was obtained by computing $\phi_0$ from
(\ref{discrete-fk-ex1}) on a grid of size $n_x$. For comparison, we have
computed a reference importance sampling Monte-Carlo solution (``exact'' mean value) based on
$N=10^5$ independent realizations that is displayed in Table \ref{tab-1} in
the column with label ``$I$''. 
The performance of the Monte Carlo methods can be evaluated based on the
variance (\ref{variance}) and the relative error (\ref{relative-error}). In
our numerical study, they are estimated from the sampled trajectories as 
\begin{align}
  \begin{split}
  \mathrm{Var}_{u} I =& \frac{1}{N}\sum_{i=1}^{N}
  \Big[\Big(\exp\Big(-\beta\int_0^T h(x^{u,i}_s) \,ds\Big) (Z^{u,i}_t)^{-1}
  \Big) - I_N\Big]^2\,, \\
  \mbox{RE}_u(I) =& \frac{\sqrt{\mathrm{Var}_{u} I}}{I_N}, 
\end{split}
  \label{sv-re}
\end{align}
where $x_s^{u,i}$ is the $i$-th trajectories, $1\le i \le N$, $I_N$ is the
estimator (\ref{ip_mc}) of $I$, and $u$ denotes the control force. See
Section~\ref{sec-setup} for details.
Furthermore, in order to illustrate the actual effect of the control force, we monitor the barrier crossing events with $x_s \ge 0$ for some $0 <
s \le T=1$ and let $R_c$ record the ratio of trajectories which cross the barrier among all the trajectories. 

In Table~\ref{tab-1}, for different values of $\beta$, we can see that 
the relative error of the importance sampling estimator becomes smaller as $\epsilon$ decreases from $0.1$ to $0.001$. 
This indicates that the importance sampling estimator performs better and
better when $\epsilon$ deceases and therefore is accordance with the conclusion of
Theorem~\ref{main_thm} in Section~\ref{sec-main}.

It is also worth making a comparison of both the importance sampling estimator and the standard Monte Carlo estimator. 
For the importance sampling estimator (Table~\ref{tab-1}), we observe that both the mean values
and the variances, estimated with $N=10^4$ trajectories, are stable after we ran several times and are close to the results estimated with $N=10^5$
trajectories, which we take as the ``exact'' mean value. For the standard Monte Carlo method (Table~\ref{tab-3}),
at $\beta = 1$, while it gives acceptable mean values, the sample variances
(and the relative errors) are larger compared to the importance sampling estimator. 
For $\beta=5,\, 8$, the results of standard Monte Carlo method drift 
away from the ``exact'' mean values and show a significant bias. These results indicate that the standard Monte Carlo method is inefficient or useless in this situation.

The above results can be better understood if we record the
barrier-crossing events during time $[0,1]$. These events are
related to the metastability of the system and become rare for $\beta=5$
and $\beta=8$.  In the ``$R_c$'' column of Table~\ref{tab-3}, we see that
very few trajectories can cross the energy barrier when $\beta = 5$, 
and it becomes even rarer when $\beta$ is further increased to $\beta = 8$, at which no barrier-crossing
trajectories are sampled with $N=10^4$ trajectories. 
This observation reveals the fact that 
crossing the energy barrier is a rare event (in the uncontrolled system) due to system's metastability at moderate
temperature. And it also explains why the estimations of the mean values are
largely underestimated by the standard Monte Carlo method (compare
Table~\ref{tab-1} and Table~\ref{tab-3}). On the other hand, as shown in
``$R_c$'' column of Table~\ref{tab-1}, the barrier-crossing events are much
better sampled by the importance sampling estimator. 
Figure \ref{fig-2} shows the control force (\ref{control-ex1}) as a
function of $x$ and time $s$ for various values of $\beta$. We clearly 
observe that the control acts against the energy barrier (blue region) and
assists the slow variable $x_s$ of the system to transit from $x=-1$ to $x=1$. 

We conclude this section with a couple of comments on numerical issues. 
\begin{remark}
  \begin{enumerate}
    \item
      It is necessary to solve the averaged equation (\ref{oc-reduced}) for
      $U_0$, or equivalently (\ref{averaged-fk}) for $\phi_0$, in order to compute control (\ref{sub-control-average}). 
      Solving $\phi_0$ from (\ref{averaged-fk}) may be relatively easy because the equation is linear. 
      Furthermore, since equation (\ref{averaged-fk}) doesn't involve the small
      parameter $\epsilon$ any more, it can be solved on a coarser grid and
      the numerical computation is not expensive.
    \item
      In our example, the probability density $\rho_x(y)$ can be solved
      analytically and used to obtain averaged dynamics
      (\ref{dynamics-averaged}) or  (\ref{ex1-reduced}). In general, the coefficients
      (\ref{averaging-coeff}) of the averaged dynamics
      (\ref{dynamics-averaged}) could be numerically computed from the time
      integration of the fast subsystem (\ref{frozen-fast-dynamics}). See
      Chapter~$10$\,-$11$ of \cite{book_PS08} and also \cite{analysis_SDE} for more details.
    \item
      In principle, the method described above for solving linear PDE (\ref{averaged-fk-ex1}) 
      is computationally applicable when the dimension $k$ of system's slow variables $x$ is smaller or equal to $3$. 
In certain cases, however, the slow dynamics may still be higher dimensional,
and alternatives to the direct numerical discretization are needed. We refer to
the Conclusions for further discussions of this issue.
  \end{enumerate}
\end{remark}

\begin{table}[h]
  \centering
  \caption{Numerical results for importance sampling Monte Carlo method with $T=1.0$.
  Columns $I$ and $I_N$ are the mean values computed with $N=10^5$ (``exact'')
  and $N=10^4$ trajectories, respectively. Columns $\mathrm{Var}_u I, \mbox{RE}_u(I)$ 
  display the variance and the relative error defined in 
  (\ref{variance}) and (\ref{relative-error}) estimated from trajectories as in (\ref{sv-re}). Column $R_{c}$ shows 
  the ratio of the trajectories that have crossed the potential barrier.\label{tab-1}}
  \begin{tabular}{|c|c|c|c|c|c|c|c|c|}
    \hline
    $\beta$ & $\epsilon$ & $n_x$ & $\Delta t$ &$I$ & $I_N$
    &$\mathrm{Var}_u I$ & $\mbox{RE}_u(I)$ & $R_{c}$\\
    \hline
    \multirow{3}{*}{$1.0$} & $0.1$ & \multirow{3}{*}{$2000$} & $1.0 \times
					      10^{-7}$ &$3.52 \times 10^{-2}$& $3.54\times 10^{-2}$ & $1.5 \times 10^{-4}$ & $0.35$ & $6.5 \times 10^{-1}$  \\
						& $0.01$ & &$1.0 \times
			10^{-7}$  &$3.12 \times 10^{-2}$& $3.12\times 10^{-2}$ & $1.5 \times 10^{-5}$ & $0.12$ & $6.3 \times 10^{-1}$ \\
					       &$0.001$ & &$1.0 \times
			       10^{-8}$ &$3.09 \times 10^{-2}$ & $3.09 \times 10^{-2}$ & $1.5 \times 10^{-6}$ & $0.04$ & $6.2 \times 10^{-1}$  \\
    \hline
    \multirow{3}{*}{$5.0$} & $0.1$ & \multirow{3}{*}{$5000$} & $1.0 \times
					       10^{-7}$&$3.82 \times 10^{-8}$ &$3.81 \times 10^{-8}$ & $3.5 \times 10^{-15}$ &$1.55$ & $8.1 \times 10^{-1}$  \\
				  & $0.01$ & & $1.0 \times
		   10^{-7}$&$1.60\times 10^{-8}$ & $1.62\times 10^{-8}$ & $4.9 \times 10^{-17}$ & $0.43$ & $7.6 \times 10^{-1}$   \\
					 &$0.001$ &&$1.0 \times 10^{-8}$&$1.47
			   \times 10^{-8}$& $1.47 \times 10^{-8}$ & $3.7 \times 10^{-18}$ & $0.13$ & $7.6 \times 10^{-1}$  \\
    \hline
    \multirow{3}{*}{$8.0$} & $0.1$ & \multirow{3}{*}{$8000$} & $1.0 \times 10^{-7}$&$1.59\times 10^{-12}$&$1.47 \times 10^{-12}$ & $1.1 \times 10^{-23}$& $2.26$ & $8.9 \times 10^{-1}$ \\ 
						 & $0.01$ & &$5.0 \times
				  10^{-8}$&$3.68 \times 10^{-13}$ & $3.68 \times 10^{-13}$ & $4.9 \times 10^{-26} $&$0.60$ & $8.7 \times 10^{-1}$  \\
	      & $0.001$ &&$1.0 \times 10^{-8}$&$3.18 \times 10^{-13}$ &
	      $3.18\times 10^{-13}$ & $3.2 \times 10^{-27} $&$0.18$ & $8.7 \times 10^{-1}$  \\
    \hline
  \end{tabular}
\end{table}

\begin{table}[h]
  \centering
  \caption{Numerical results for standard Monte Carlo method ($u=0$). The
  labels have the same meaning as in Table \ref{tab-1}. \label{tab-3}}
  \begin{tabular}{|c|c|c|c|c|c|c|c|}
    \hline
    $\beta$ & $\epsilon$ & $\Delta t$ & $I_N$ & $\mathrm{Var}_u I$ & $\mbox{RE}_u(I)$ & $R_{c}$\\
    \hline
    \multirow{3}{*}{$1.0$} & $0.1$ & $1.0 \times 10^{-7}$ & $3.58 \times
				 10^{-2}$& $4.3 \times 10^{-3}$ & $1.83$ & $1.9 \times 10^{-1}$ \\
		    &  $0.01$ & $1.0 \times 10^{-7}$& $3.27\times 10^{-2}$ &
    $3.9 \times 10^{-3}$ & $1.91$ & $1.8 \times 10^{-1}$ \\
      & $0.001$ & $1.0 \times 10^{-8}$&  $3.14\times 10^{-2}$ & $3.4 \times 10^{-3}$ & $1.86$ & $1.8 \times 10^{-1}$\\
    \hline
    \multirow{3}{*}{$5.0$} & $0.1$ & $1.0 \times 10^{-7}$& $2.27\times
				10^{-8}$ & $6.3 \times 10^{-13}$ & $34.97$ & $3.0 \times 10^{-4}$\\
		     & $0.01$ & $1.0 \times 10^{-7}$ & $2.98\times 10^{-9}$&
		     $6.4 \times 10^{-16}$ & $8.49$ & $0$\\
				   & $0.001$ & $1.0 \times 10^{-8}$& $3.61 \times 10^{-9}$& $6.8 \times 10^{-15}$ & $22.84$ & $1.0 \times 10^{-4}$ \\
    \hline
    \multirow{3}{*}{$8.0$} & $0.1$ & $1.0 \times 10^{-7}$& $3.68 \times
				10^{-14}$ & $1.1 \times 10^{-24}$ & $28.50$ & $0$\\
						& $0.01$ & $5.0 \times
					10^{-8}$& $1.87 \times 10^{-14}$ &$3.8
		 \times 10^{-25}$ & $32.96$ & $0$\\
      & $0.001$ &$1.0 \times 10^{-8}$ & $2.01 \times 10^{-14}$&$4.4 \times
      10^{-25}$ & $33.00$ & $0$ \\
    \hline
  \end{tabular}
\end{table}

\begin{figure}
\centering
\includegraphics[width=12cm]{./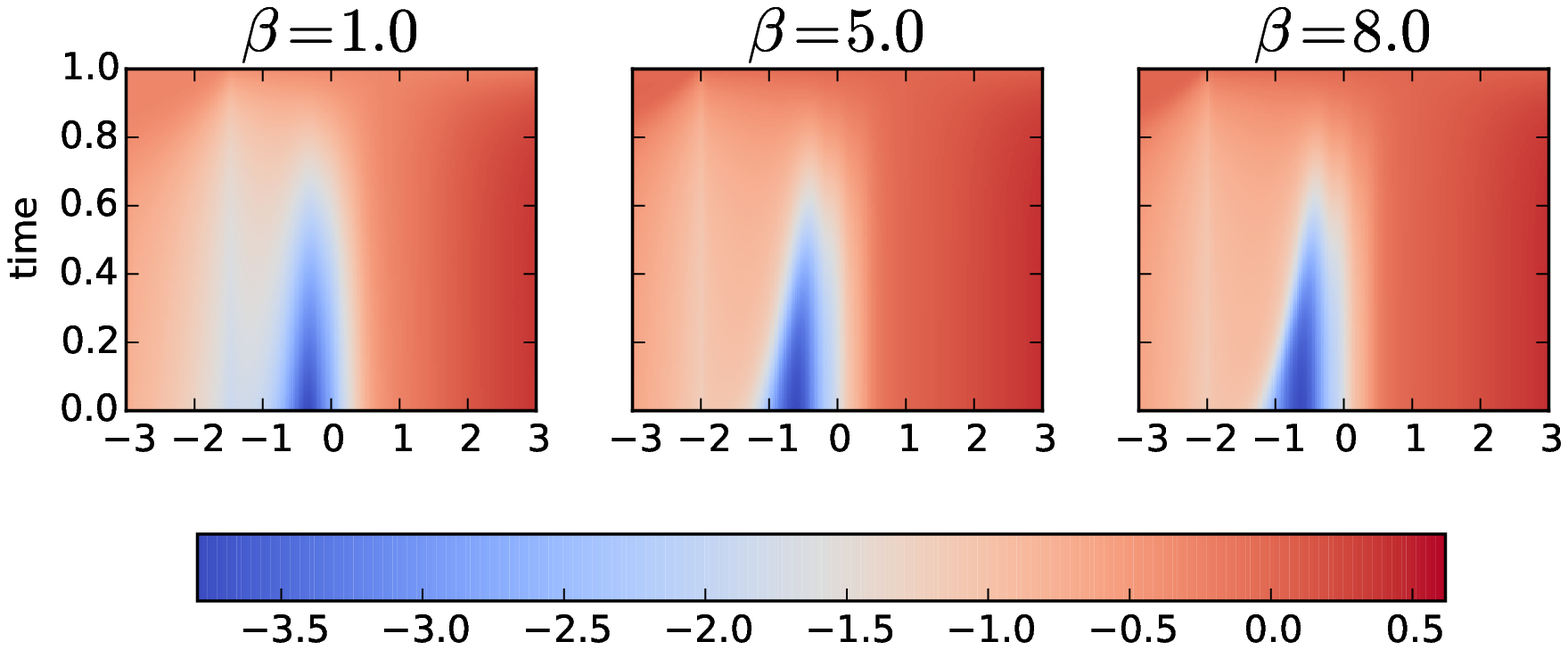}
\vspace{-1cm}
\caption{$x$-component of control force $\hat{u}^0_s$ defined in
  (\ref{control-ex1}) for different $\beta$ as a function of $x$ and $s$. 
\label{fig-2}}
\end{figure}

\section{Proof of the main result}
\label{sec-proof}

In this section, we prove our main result, Theorem~\ref{main_thm} in Section~\ref{sub-sec-main-1}.
Since the parameter $\beta$ is fixed, it can be absorbed into coefficients  $\alpha_1$ and $\alpha_2,
h$, and we can assume $\beta = 1$ without loss of generality.
Also recall that $\|\cdot\|$ denotes the Frobenius norm of matrices and
$|\cdot|$ is the Euclidean norm of vectors or the absolute value of a scalar. 

Our analysis is based on the solution $\phi^\epsilon$ of the linear backward evolution equation 
(\ref{linear-compact-hjb}) and the solution $\phi_0$ of (\ref{averaged-fk}) where,
by the Feynman-Kac formula, both $\phi^\epsilon$ and $\phi_0$ can be expressed in terms of conditional 
expectations like (\ref{expectation-averaged}).\\

\textbf{Idea of the proof. }
Under Assumption~\ref{assumption-1}, it is well known that both
$\phi^\epsilon$ and $\phi_0$ are $C^1$ functions
\cite{da2002second,cerrai2001second,friedman1964partial} and that, 
using the probabilistic representation (\ref{expectation-averaged}), their derivatives have explicit 
expressions in terms of conditional expectations : 
\begin{align}
  \begin{split}
    \partial_{x_i} \phi^\epsilon =&- \bE^{x,y}\Big[
   e^{-\int_t^T h(x_s) ds}\int_t^T \nabla_x
 h(x_s) \cdot x_{s,x_i}  \,ds\Big],\quad 1 \le i \le k \\
   \partial_{y_i} \phi^\epsilon =&- \bE^{x,y}\Big[
   e^{-\int_t^T h(x_s) ds}\int_t^T \nabla_x
 h(x_s) \cdot x_{s,y_i}  \,ds\Big], \quad 1 \le i \le l\\
 \partial_{x_i} \phi_0 =&- \bE^{x}\Big[
 e^{-\int_t^T h(\widetilde{x}_s) ds}\int_t^T \nabla_x
 h(\widetilde{x}_s) \cdot \widetilde{x}_{s,x_i}  \,ds\Big],\quad 1 \le i \le
 k\,. 
  \end{split}
  \label{derivative-formula}
\end{align}
That is, the derivatives can be put inside the expectation, see
Section 1.3 of \cite{cerrai2001second} and Section~2.7-2.8 of \cite{krylow_controlled1980}. Here, we have
used Assumption~\ref{assumption-3} that the running cost $h$ depends only on $x$, and that 
the dynamics $x_s, y_s$ and $\widetilde{x}_s$ satisfy (\ref{averaging-dynamics}) and
(\ref{dynamics-averaged}). Moreover, we have introduced the shorthand $\bE^{x,y}$ to denote the expectation
conditioned on $x_t=x,y_t=y$ and similarly for $\bE^{x}$. \\ 

The processes $x_{s,x_i} \in \mathbb{R}^k, y_{s,x_i} \in \mathbb{R}^l$ in
(\ref{derivative-formula}) describe the partial derivatives of processes $x_s$ and $y_s$
with respect to the initial conditions and satisfy the equations
\begin{align}
  \begin{split}
  &dx_{s,x_i} = (\nabla_{x} f\, x_{s,x_i} + \nabla_{y} f\, y_{s, x_i}) ds + (\nabla_{x} \alpha_1\, x_{s,x_i} + \nabla_{y} \alpha_1\, y_{s,x_i}) dw_s^1 \\
  &dy_{s,x_i} = \frac{1}{\epsilon} (\nabla_{x} g\, x_{s,x_i} + \nabla_{y} g\, y_{s, x_i}) ds +
  \frac{1}{\sqrt{\epsilon}} (\nabla_{x} \alpha_2\, x_{s,x_i} + \nabla_{y}
  \alpha_2\, y_{s,x_i}) dw_s^2\,,
\end{split}
\quad 1 \le i \le k
\label{1st-variation-x}
\end{align}
with $x^{j}_{t,x_i} = \delta_{ij}, 1 \le j \le k$, $y_{t,x_i} = 0 \in
\mathbb{R}^l$. 
Here $\nabla_x \alpha_1 x_{s,x_i}$ denotes the $k \times m_1$ matrix
whose components are 
\begin{align}
  (\nabla_x \alpha_1 x_{s,x_i})_{j_1j_2} = \sum_{r=1}^k\frac{\partial
  (\alpha_1)_{j_1j_2}}{\partial
  x_r} x^{r}_{s,x_i}\,, \qquad 1 \le j_1\le k\,, \quad 1 \le j_2 \le m_1\,.
  \label{notation-explain}
\end{align}
The other terms in (\ref{1st-variation-x}) are analogously defined.
Similarly, the processes $x_{s,y_i} \in \mathbb{R}^k$ and $y_{s,y_i} \in
\mathbb{R}^l$ satisfy
\begin{align}
  \begin{split}
  &dx_{s,y_i} = (\nabla_{x} f\, x_{s,y_i} + \nabla_{y} f\, y_{s, y_i}) ds + (\nabla_{x} \alpha_1\,
    x_{s,y_i} + \nabla_{y} \alpha_1\, y_{s,y_i}) dw_s^1 \\
  &dy_{s,y_i} = \frac{1}{\epsilon} (\nabla_{x} g \,x_{s,y_i} + \nabla_{y} g \,y_{s, y_i}) ds +
    \frac{1}{\sqrt{\epsilon}} (\nabla_{x} \alpha_2\, x_{s,y_i} + \nabla_{y}
    \alpha_2\, y_{s,y_i}) dw_s^2 \,,
\end{split}
\quad 1 \le i \le l
\label{1st-variation-y}
\end{align}
with $x_{t,y_i} = 0 \in \mathbb{R}^k$, $y^j_{t,y_i} = \delta_{ij} \in
\mathbb{R}^l, 1 \le j \le l$
(Notice that the above equations also hold when the coefficient $\alpha_1$ depends on both
$x,y$, so terms involving $\nabla_{y}\alpha_1$ are kept there). 
The above formulas (\ref{derivative-formula})--(\ref{1st-variation-y}) allow us to compare the 
dynamics $x_s, y_s$, $\widetilde{x}_s$, the controlled dynamics and the resulting importance sampling
estimators. For simplicity, we
consider the dynamics on $[0,T]$ that entails similar estimates for the case $s\in [t,T]$. 
We therefore suppose that the initial values of $x_s, \widetilde{x}_s$ are
$x_0 \in \mathbb{R}^k$ and the initial value of $y_s$ is $y_0 \in
\mathbb{R}^l$. The notation $\mathbf{E}$ below will always refer to the expectation
conditioned on these initial values.\\

To prove Theorem \ref{main_thm}, we will adapt some estimates used in
\cite{liu2010}. See also
\cite{da1996ergodicity,cerrai2001second,liu2014_nnsa,Givon_jump_diffusion2007}
for similar techniques. We follow \cite{liu2010} and define a partition of the interval $[0,T]$ by $[0, \Delta]$,
$[\Delta,2 \Delta]$, $\cdots$, $[(M-1)\Delta, M\Delta]$ with $\Delta =T/M$, $M > 0$, and consider the auxiliary process 
\begin{align}
  \begin{split}
  d\hat{x}_s &= f(x_{j\Delta},\hat{y}_s) ds + \alpha_1(x_s) dw_s^1 \\
  d\hat{y}_s &= \frac{1}{\epsilon} g(x_{j\Delta},\hat{y}_s) ds +
  \frac{1}{\sqrt{\epsilon}}\alpha_2(x_{j\Delta},\hat{y}_s) dw_s^2 
\end{split}
\label{auxiliary}
\end{align}
for $s\in[j\Delta, (j+1)\Delta)$, $0\le j \le (M-1)$, with the continuity
  condition 
  \begin{align*}
    \hat{x}_{(j+1) \Delta} = \lim_{s \rightarrow (j+1)\Delta^-}
    \hat{x}_s,\quad 
    \hat{y}_{(j+1) \Delta} = \lim_{s \rightarrow (j+1)\Delta^-}
    \hat{y}_s\,,
  \end{align*}
  and initial conditions $\hat{x}_0 = x_0$, $\hat{y}_0 = y_0$.
Without loss of generality, we can suppose that  $\Delta \le 1$. This auxiliary process
will serve as a bridge between (\ref{averaging-dynamics}) and
(\ref{dynamics-averaged}). In contrast to \cite{liu2010} and due to the fact that we consider controlled dynamics, 
estimates for $4$th-order moments as well as for the processes (\ref{1st-variation-x})
and (\ref{1st-variation-y}) will be needed in order to prove the theorem.

Before entering the details of the various estimates,
we first summarize our main technical results, the proofs of which will be given in the
following subsections. 

For the derivative processes satisfying (\ref{1st-variation-x}) and
(\ref{1st-variation-y}), we have (see Theorem~\ref{y-derivative-bound-2} and
Lemma~\ref{x-derivative-bound} below): 
\begin{theorem}
  Let Assumptions \ref{assumption-1}--\ref{assumption-3} hold.
  Then $\exists C>0$, independent of $\epsilon$, $x_0$ and $y_0$, such that 
  \begin{align*}
    & \max_{0\le s \le T} \bE|x_{s,x_i}|^2 \le C, \qquad  \max_{0\le s \le T}
    \bE|y_{s,x_i}|^2 \le C, \quad \quad 1 \le i \le k. \\
    & \max_{0\le s \le T} \bE|x_{s,y_i}|^2 \le C\epsilon^2, \qquad  \bE|y_{t,y_i}|^2 \le
    e^{-\frac{\lambda t}{\epsilon}} + C\epsilon^2, \quad t \in [0, T]\,, \quad
    1 \le i \le l.
  \end{align*}
  \label{main-result-1}
\end{theorem}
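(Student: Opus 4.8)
The plan is to reduce, for each of the four derivative processes, the estimate to a coupled pair of scalar differential inequalities for the second moments $a(s)=\bE|x_{s,\cdot}|^{2}$ and $b(s)=\bE|y_{s,\cdot}|^{2}$, and to read off the bounds by Gronwall-type arguments in which the dissipativity of the fast block produces the powers of $\epsilon$. Throughout $\beta=1$; by Assumptions~\ref{assumption-1}--\ref{assumption-3} all Jacobians appearing in (\ref{1st-variation-x})--(\ref{1st-variation-y}) are bounded by a constant $C$, $\nabla_{y}\alpha_{1}=0$ since $\alpha_{1}$ does not depend on $y$, and the mixing inequality (\ref{mixing}) is available. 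A preliminary localization argument shows that for each fixed $\epsilon$ the relevant moments are finite on $[0,T]$, which justifies the computations below and makes the stochastic integrals true martingales.

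An It\^o computation for $|y_{s,\cdot}|^{2}$, in which the quadratic-variation term is split so that (\ref{mixing}) controls the $\nabla_{y}g$, $\nabla_{y}\alpha_{2}$ contributions (this is where the generous constant $3$ is used) while Young's inequality absorbs those involving $\nabla_{x}g$, $\nabla_{x}\alpha_{2}$, yields
\[
\frac{d}{ds}b(s)\le-\frac{\lambda}{\epsilon}\,b(s)+\frac{C}{\epsilon}\,a(s),\qquad\text{hence}\qquad b(s)\le e^{-\lambda s/\epsilon}b(0)+\frac{C}{\epsilon}\int_{0}^{s}e^{-\lambda(s-r)/\epsilon}a(r)\,dr .
\]
The corresponding computation for $|x_{s,\cdot}|^{2}$, using boundedness of the Jacobians and $\nabla_{y}\alpha_{1}=0$, gives
\[
\frac{d}{ds}a(s)\le C\,a(s)+2\,\bE\langle x_{s,\cdot},\nabla_{y}f\,y_{s,\cdot}\rangle\le C\big(a(s)+b(s)\big).
\]
For the slow-variable derivatives, with $(a(0),b(0))=(1,0)$, the Duhamel estimate gives $b(s)\le\tfrac{C}{\lambda}\sup_{r\le s}a(r)$; substituting into the $a$-inequality and applying Gronwall on $[0,T]$ proves $\bE|x_{s,x_{i}}|^{2}\le C$ and then $\bE|y_{s,x_{i}}|^{2}\le C$. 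For the fast-variable derivatives, with $(a(0),b(0))=(0,1)$, the Duhamel estimate keeps the term $e^{-\lambda s/\epsilon}$, and a first, crude Gronwall already yields $\bE|x_{s,y_{i}}|^{2}\le C\epsilon$ and $\bE|y_{s,y_{i}}|^{2}\le e^{-\lambda s/\epsilon}+C\epsilon$.

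The step I expect to be the main obstacle is upgrading $\bE|x_{s,y_{i}}|^{2}$ from $O(\epsilon)$ to the claimed $O(\epsilon^{2})$; once this is done, feeding it back into the Duhamel bound immediately gives $\bE|y_{t,y_{i}}|^{2}\le e^{-\lambda t/\epsilon}+C\epsilon^{2}$. A single bootstrap is insufficient: estimating the cross term in the $a$-inequality by $a(s)^{1/2}b(s)^{1/2}$ and inserting the $e^{-\lambda s/\epsilon}$-part of $b$ still costs a factor $\epsilon^{1/2}$ and only reproduces $O(\epsilon)$. Instead I would run a self-consistent (Bihari-type) argument on $P(s):=\epsilon^{-2}\sup_{r\le s}\bE|x_{r,y_{i}}|^{2}$, which is finite for each fixed $\epsilon$ by the crude bound. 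Writing the $a$-estimate in integrated form, substituting $b(r)\le e^{-\lambda r/\epsilon}+\tfrac{C}{\lambda}\epsilon^{2}P(r)$ and $a(r)^{1/2}\le\epsilon\,P(r)^{1/2}$, and using $\int_{0}^{s}e^{-\lambda r/2\epsilon}\,dr\le 2\epsilon/\lambda$, the powers of $\epsilon^{-1}$ and $\epsilon^{2}$ cancel and one is left with an inequality of the shape
\[
P(s)\le C_{1}\int_{0}^{s}P(r)\,dr+C_{2}\,P(s)^{1/2},
\]
where the linear Volterra term (carrying the $\nabla_{x}f$, $\nabla_{x}\alpha_{1}$ contributions) cannot be dropped for general $T$. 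I would close this by the elementary dichotomy ``either $P(s)^{1/2}\le 2C_{2}$, or $C_{2}P(s)^{1/2}\le\tfrac12 P(s)$'', which turns it into $P(s)\le 2C_{1}\int_{0}^{s}P(r)\,dr+4C_{2}^{2}$ and hence, by Gronwall, $P(s)\le 4C_{2}^{2}e^{2C_{1}T}$ --- a bound independent of $\epsilon$, so $\bE|x_{s,y_{i}}|^{2}\le C\epsilon^{2}$. Finally, because the right-hand sides of (\ref{1st-variation-x})--(\ref{1st-variation-y}) involve only the uniformly bounded Jacobians, all constants produced along the way depend solely on $C$, $\lambda$, $T$ and the dimensions, hence not on $\epsilon$, $x_{0}$ or $y_{0}$.
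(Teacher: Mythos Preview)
Your derivation of the coupled differential inequalities for $a(s)=\bE|x_{s,\cdot}|^{2}$ and $b(s)=\bE|y_{s,\cdot}|^{2}$, the Gronwall argument for the $x_{i}$-derivatives, and the crude $O(\epsilon)$ bound for the $y_{i}$-derivatives all coincide with the paper (Lemma~\ref{y-derivative-bound} and Lemma~\ref{x-derivative-bound}). The one substantive difference is the step from $O(\epsilon)$ to $O(\epsilon^{2})$ for $\bE|x_{s,y_{i}}|^{2}$.

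The paper does \emph{not} bootstrap. Instead, in Theorem~\ref{y-derivative-bound-2} it applies Young's inequality to the cross term $\bE\langle x_{s,y_{i}},\nabla_{y}f\,y_{s,y_{i}}\rangle$ with a \emph{time-dependent} weight $\epsilon^{\pm\gamma(s)}$, where $\gamma$ decreases linearly from $1$ to $0$ on the initial layer $[0,t_{1}]$, $t_{1}=-2\epsilon\ln\epsilon/\lambda$, and vanishes afterwards. This produces the system $a'\le C(1+\epsilon^{-\gamma})a+C\epsilon^{\gamma}b$, $b'\le-\tfrac{\lambda}{\epsilon}b+\tfrac{C}{\epsilon}a$, and the $\epsilon^{2}$ rate is then read off from an elementary two-phase Gronwall argument (Claim~\ref{claim-3}): on $[0,t_{1}]$ the factor $\epsilon^{\gamma(s)}$ compensates the not-yet-decayed $b(s)$, while on $[t_{1},T]$ one starts from data already of size $O(\epsilon^{2})$.

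Your route---setting $P(s)=\epsilon^{-2}\sup_{r\le s}a(r)$, using Cauchy--Schwarz on the cross term, inserting the Duhamel bound for $b$, and closing via the dichotomy $P^{1/2}\le 2C_{2}$ or $C_{2}P^{1/2}\le\tfrac12 P$ followed by Gronwall---is correct and genuinely different. It avoids introducing the auxiliary weight $\gamma$ and treats the initial layer implicitly through the estimate $\int_{0}^{s}e^{-\lambda r/(2\epsilon)}dr\le 2\epsilon/\lambda$; the price is that the mechanism (cancellation of the $\epsilon$-powers in the Bihari inequality) is somewhat less transparent than the paper's explicit layer analysis. Either argument yields constants depending only on $C,\lambda,T$ and the dimensions, hence independent of $\epsilon,x_{0},y_{0}$.
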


For the approximation results, we have (see Theorem~\ref{thm-average-limit-xs}
and Theorem~\ref{thm-average-limit-xs_dx} below): 
\begin{theorem}
  Let Assumptions \ref{assumption-1}--\ref{assumption-3} hold.
  Then $\exists C>0$, independent of $\epsilon$
  and can be chosen uniformly for $x_0$, $y_0$ which are contained in some
  bounded domain of $\mathbb{R}^k \times \mathbb{R}^l$, such that 
    \begin{align}
      \max_{0 \le s \le T} \bE|x_s - \widetilde{x}_s|^4 \le C \epsilon^{\frac{1}{2}}\,.  \notag
    \end{align}
  \label{main-result-2}
\end{theorem}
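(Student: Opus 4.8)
As announced at the start of the section we set $\beta=1$ and compare $x_s$ with $\widetilde x_s$ through the auxiliary process $(\hat x_s,\hat y_s)$ of (\ref{auxiliary}), which carries the free block parameter $\Delta=T/M\le 1$; the plan is to split
\begin{align*}
  x_s-\widetilde x_s=(x_s-\hat x_s)+(\hat x_s-\widetilde x_s),
\end{align*}
estimate the two differences separately, and then optimize over $\Delta$. The key structural simplification comes from Assumption~\ref{assumption-3}: since $\alpha_1$ does not depend on $y$, (\ref{averaging-coeff}) gives $\widetilde\alpha\widetilde\alpha^T=\alpha_1\alpha_1^T$, so we may take $\widetilde\alpha=\alpha_1$ and drive (\ref{dynamics-averaged}) by the same Wiener process $w^1$. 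All three processes then carry a diffusion coefficient of the form $\alpha_1(\cdot)$, which removes the genuine diffusion--coefficient averaging that would obstruct a pathwise ($L^4$) comparison and, in addition, makes $x_s-\hat x_s$ a process of bounded variation. Throughout, I would use the a priori fourth--moment bounds for $x_s,y_s,\hat x_s,\hat y_s$ that hold uniformly in $\epsilon$ on $[0,T]$; these follow from the dissipativity estimate (\ref{mixing-1}) and are the source of the dependence of $C$ on the bounded set containing $(x_0,y_0)$.

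\textbf{Block error.} Since the $dw^1_s$ parts of $x_s$ and $\hat x_s$ coincide, $d(x_s-\hat x_s)=[f(x_s,y_s)-f(x_{j\Delta},\hat y_s)]\,ds$, and the Lipschitz bound from Assumption~\ref{assumption-1} gives $\bE|x_s-\hat x_s|^4\le C\int_0^s(\bE|x_r-x_{j(r)\Delta}|^4+\bE|y_r-\hat y_r|^4)\,dr$. The increment estimate $\bE|x_r-x_{j(r)\Delta}|^4\le C\Delta^2$ follows from the moment bounds and boundedness of $\alpha_1$. For $\bE|y_r-\hat y_r|^4$ I would apply It\^o's formula to $|y_r-\hat y_r|^4$: the dissipativity inequality (\ref{mixing}) --- whose constant $3$ is exactly what dominates the fourth--order It\^o corrections coming from the $\epsilon^{-1/2}$ noise --- produces a damping term $-c\,\epsilon^{-1}\,\bE|y_r-\hat y_r|^4$, whereas the frozen-slow perturbation contributes at most $C\,\epsilon^{-1}\,\bE|x_r-x_{j\Delta}|^4\le C\,\epsilon^{-1}\Delta^2$; the factors $\epsilon^{-1}$ cancel and Gronwall's lemma yields $\bE|y_r-\hat y_r|^4\le C\Delta^2$. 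Hence $\bE|x_s-\hat x_s|^4\le C\Delta^2$.

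\textbf{Averaging error.} Here $d(\hat x_s-\widetilde x_s)=[f(x_{j\Delta},\hat y_s)-\widetilde f(\widetilde x_s)]\,ds+[\alpha_1(x_s)-\alpha_1(\widetilde x_s)]\,dw^1_s$, and I would split the drift as $(\widetilde f(x_{j\Delta})-\widetilde f(\widetilde x_s))+F(x_{j\Delta},\hat y_s)$ with $F(x,y):=f(x,y)-\widetilde f(x)$. The first summand is Lipschitz in its arguments (using that $\widetilde f$ is Lipschitz by Lemma~\ref{lemma-stationary}) and, together with the Lipschitz martingale term handled via the Burkholder--Davis--Gundy inequality, is absorbed by Gronwall once $|x_s-x_{j\Delta}|$ is controlled as in the previous step. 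The real work is the fluctuation $\sum_j\int_{j\Delta}^{\cdot}F(x_{j\Delta},\hat y_r)\,dr$: on each block $\hat y_r$ is the frozen fast diffusion, which by Assumption~\ref{assumption-2} and the ergodicity results of Appendix~\ref{app-2} mixes exponentially fast on the time scale $\epsilon/\lambda$, so $F(x_{j\Delta},\hat y_r)$ decorrelates on scale $O(\epsilon)$ and each block integral is a small, almost centered increment. A Khasminskii--type estimate --- decorrelation controlled by exponential mixing and the uniform moment bounds, together with the near--martingale structure of the partial sums over the $T/\Delta$ blocks so that the mixing gain survives the summation --- bounds the $L^4$ size of the full fluctuation by $C(\epsilon/\Delta)^{b}$ for a suitable $b>0$. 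With Gronwall this gives $\bE|\hat x_s-\widetilde x_s|^4\le C(\Delta^{2}+(\epsilon/\Delta)^{b})$.

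\textbf{Conclusion and main difficulty.} Adding the two estimates, $\max_{0\le s\le T}\bE|x_s-\widetilde x_s|^4\le C(\Delta^{2}+(\epsilon/\Delta)^{b})$; choosing $\Delta$ as the power of $\epsilon$ that balances the two terms yields the asserted bound $C\epsilon^{1/2}$. I expect the averaging step to be the main obstacle: carrying out the Khasminskii estimate at the level of fourth moments, with $x_{j\Delta}$ itself random and correlated with the noise driving $\hat y$, and accounting correctly for the accumulation of block errors over $\sim T/\Delta$ blocks without losing the exponential--mixing gain. Establishing the uniform--in--$\epsilon$ fourth moment a priori bounds for $x_s,y_s,\hat x_s,\hat y_s$ from (\ref{mixing-1}) is the other preliminary that must be in place first.
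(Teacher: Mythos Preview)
Your strategy matches the paper's: split through the auxiliary process (\ref{auxiliary}), estimate the block error and the averaging error separately, then optimize $\Delta$. The block-error step is exactly Lemma~\ref{auxiliary-error}, and your observation that Assumption~\ref{assumption-3} forces $\widetilde\alpha=\alpha_1$ and makes $x_s-\hat x_s$ a process of bounded variation is correct and is precisely what the paper exploits.

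The execution of the averaging step differs. You propose an integral/BDG approach and then a direct fourth-moment Khasminskii-type estimate on the fluctuation sum $\sum_j\int_{j\Delta}^{(j+1)\Delta}F(x_{j\Delta},\hat y_r)\,dr$. The paper instead applies It\^o's formula to $|\hat x_s-\widetilde x_s|^4$ (Lemma~\ref{lemma-hatx-tildex}): the fluctuation then enters only \emph{linearly}, paired against the cubic multiplier $|\hat x_r-\widetilde x_r|^2(\hat x_r-\widetilde x_r)$. Freezing this multiplier at the block endpoints (which introduces an extra $O(\Delta)$ error, the term $I_2$) lets one condition on $\mathcal F_{j\Delta}$ and use only the exponential decay of the \emph{conditional mean} of $F$ (Lemma~\ref{exp-converge}), avoiding any higher-moment mixing estimate. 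This yields the explicit inhomogeneity $C(\epsilon/(\lambda\Delta)+\Delta)$ after Gronwall, and the choice $\Delta=\epsilon^{1/2}$ gives $C\epsilon^{1/2}$. Your route is viable but heavier: a genuine fourth-moment bound on the fluctuation sum requires separating bias and martingale parts and tracking the unbounded factors $1+|x_{j\Delta}|+|\hat y_{j\Delta}|$ across blocks. Note also that your final balance ``$\Delta^2+(\epsilon/\Delta)^b$ gives $\epsilon^{1/2}$'' is imprecise: in the paper the averaging step itself already produces an $O(\Delta)$ (not $\Delta^2$) inhomogeneous term from the freezing, so the correct trade-off is $\epsilon/\Delta+\Delta$, which is what fixes $\Delta=\epsilon^{1/2}$.
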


\begin{theorem}
  Let Assumptions \ref{assumption-1}--\ref{assumption-3} hold.
  Then $\exists C>0$, independent of $\epsilon$ 
  and can be chosen uniformly for $x_0$, $y_0$ which are contained in some
  bounded domain of $\mathbb{R}^k \times \mathbb{R}^l$ , such that 
    \begin{align}
      \max_{0 \le s \le T} \bE|x_{s,x_i} - \widetilde{x}_{s,x_i}|^2 \le C \epsilon^{\frac{1}{4}}\,. \notag
    \end{align}
  \label{main-result-3}
\end{theorem}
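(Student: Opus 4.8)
The plan is to derive a linear SDE for the difference process $\eta_s := x_{s,x_i} - \widetilde{x}_{s,x_i}$ and to show that its second moment is controlled by (a) the strong approximation errors already available from Theorems~\ref{main-result-1} and~\ref{main-result-2} and (b) a single ``fast fluctuation'' term that averages out. By Assumption~\ref{assumption-3} one has $\widetilde{\alpha}\widetilde{\alpha}^T = \alpha_1\alpha_1^T$, so we may take $\widetilde{\alpha}=\alpha_1$ and realise $\widetilde{x}_s$ on the same probability space, driven by $w^1$; subtracting the derivative equation of (\ref{dynamics-averaged}) from (\ref{1st-variation-x}) then gives, using $\eta_0=0$,
\begin{align*}
  d\eta_s &= \nabla_x\widetilde{f}(\widetilde{x}_s)\,\eta_s\,ds + G_s\,ds + R^{(1)}_s\,ds + \big(\nabla_x\alpha_1(x_s)\,\eta_s + R^{(2)}_s\big)\,dw_s^1, \\
  G_s &= \big(\nabla_x f(x_s,y_s) - \nabla_x\widetilde{f}(x_s)\big)x_{s,x_i} + \nabla_y f(x_s,y_s)\,y_{s,x_i},
\end{align*}
with $R^{(1)}_s = \big(\nabla_x\widetilde{f}(x_s) - \nabla_x\widetilde{f}(\widetilde{x}_s)\big)x_{s,x_i}$ and $R^{(2)}_s = \big(\nabla_x\alpha_1(x_s) - \nabla_x\alpha_1(\widetilde{x}_s)\big)\widetilde{x}_{s,x_i}$; here I use that $\nabla_x\alpha_1$ is Lipschitz (Assumption~\ref{assumption-1}) and that $\widetilde{f}$ is $C^1$ with Lipschitz gradient, which is part of the regularity theory for $\widetilde{f}$ developed in Appendix~\ref{app-2}. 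Since $\nabla_x\widetilde{f}$ and $\nabla_x\alpha_1$ are bounded, the It\^o isometry, H\"older's inequality and Gronwall's lemma reduce the theorem to the two estimates
\[
  \sup_{0\le s\le T}\Big(\bE|R^{(1)}_s|^2 + \bE|R^{(2)}_s|^2\Big) \le C\epsilon^{\frac14}, \qquad
  \sup_{0\le s\le T}\bE\Big|\int_0^s G_r\,dr\Big|^2 \le C\epsilon^{\frac14}.
\]
The first one is immediate from Theorem~\ref{main-result-2} ($\bE|x_s-\widetilde{x}_s|^4\le C\epsilon^{1/2}$) together with fourth moment bounds for $x_{s,x_i}$ and $\widetilde{x}_{s,x_i}$, the fourth-moment counterparts of Theorem~\ref{main-result-1}.

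\textbf{Block freezing.} For the second estimate I would follow the block-averaging scheme already set up around the auxiliary process (\ref{auxiliary}). Partition $[0,T]$ into intervals $[t_j,t_{j+1})$ of length $\Delta=\Delta(\epsilon)$ and, on the $j$-th block, replace $x_r$ and $x_{r,x_i}$ in $G_r$ by their frozen values $x_{t_j}$, $x_{t_j,x_i}$, and replace $y_r$, $y_{r,x_i}$ by $\hat{y}_r$ from (\ref{auxiliary}) and by $\hat{y}_{r,x_i}$, the solution of the linearisation of the frozen fast equation forced by the constant $x_{t_j,x_i}$. Writing $\hat{G}_r$ for the resulting frozen analogue of $G_r$, the replacement error is controlled by the Lipschitz continuity of the coefficients together with the increment bounds $\bE|x_r-x_{t_j}|^4 + \bE|x_{r,x_i}-x_{t_j,x_i}|^4 \le C\Delta^2$ and the dissipativity estimates $\sup_r\big(\bE|y_r-\hat{y}_r|^4 + \bE|y_{r,x_i}-\hat{y}_{r,x_i}|^4\big) \le C\Delta^2$; the last two follow from Assumption~\ref{assumption-2}, in the form (\ref{mixing}), by the same Gronwall-over-blocks estimate used for $y_s$ itself in the proof of Theorem~\ref{main-result-2}, the linearised fast equation inheriting the contractivity because the constant $3$ in (\ref{mixing}) leaves room to absorb its quadratic-variation term. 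This yields $\bE\big|\int_0^s(G_r-\hat{G}_r)\,dr\big|^2 \le C\Delta$.

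\textbf{Ergodic averaging.} It remains to bound $\bE\big|\int_0^s\hat{G}_r\,dr\big|^2$. Conditionally on $\mathcal{F}_{t_j}$ the pair $(\hat{y}_r,\hat{y}_{r,x_i})_{r\in[t_j,t_{j+1})}$ is, after the time change $r\mapsto r/\epsilon$, a time-homogeneous diffusion with frozen parameters $(x_{t_j},x_{t_j,x_i})$, which by Assumption~\ref{assumption-2} and the Lyapunov bound (\ref{mixing-1}) is exponentially ergodic with rate $\lambda/\epsilon$; its invariant law has $\hat{y}$-marginal $\rho_{x_{t_j}}$ and conditional mean of the second component linear in $x_{t_j,x_i}$, of the form $\chi_{x_{t_j}}(\hat{y})\,x_{t_j,x_i}$. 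The crucial point is that the equilibrium average of $\hat{G}$ vanishes, i.e.
\[
  \int_{\mathbb{R}^l}\big[\nabla_x f(x,y) + \nabla_y f(x,y)\,\chi_x(y)\big]\,\rho_x(y)\,dy = \nabla_x\widetilde{f}(x), \qquad x\in\mathbb{R}^k,
\]
which is precisely the chain rule for $\widetilde{f}(x) = \int f(x,y)\rho_x(y)\,dy$ once the $x$-dependence of $\rho_x$ is accounted for --- equivalently, it is obtained by differentiating in $x$ the ergodic-average representation $\widetilde{f}(x)=\lim_{t\to\infty}t^{-1}\int_0^t\bE[f(x,Y^x_s)]\,ds$ of the averaged drift, or read off as the centring condition of the Poisson equation for the frozen fast generator, and it is the place where the genuinely $O(1)$ term $\nabla_y f\,y_{s,x_i}$ is ``used up''. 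Granting this identity, exponential ergodicity gives $\big|\bE[\hat{G}_r\mid\mathcal{F}_{t_j}]\big| \le C\,\omega_{t_j}\,e^{-\lambda(r-t_j)/\epsilon}$ with $\sup_j\bE\omega_{t_j}^2 \le C$. Splitting $\int_{t_j}^{t_{j+1}}\hat{G}_r\,dr$ into its $\mathcal{F}_{t_j}$-conditional mean and the complementary martingale increment and using $\int_{t_j}^{t_{j+1}}e^{-\lambda(r-t_j)/\epsilon}\,dr \le \epsilon/\lambda$: the conditional means sum to an $O(T\epsilon/\Delta)$ term (whence $O(\epsilon^2/\Delta^2)$ in the second moment), while the martingale increments are orthogonal and contribute $O(\Delta)$. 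Altogether $\bE\big|\int_0^s G_r\,dr\big|^2 \le C(\Delta + \epsilon^2/\Delta^2)$, and the choice $\Delta=\epsilon^{2/3}$ gives an $O(\epsilon^{2/3})$ bound, dominated by the $O(\epsilon^{1/4})$ coming from $R^{(1)}$ and $R^{(2)}$; Gronwall's lemma then closes the argument, with all constants uniform over initial data in a fixed bounded domain.

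\textbf{Main obstacle.} The hard part is the averaging identity for $\nabla_x\widetilde{f}$ above. Unlike in Theorem~\ref{main-result-2}, where the averaged drift equals $\int f\,\rho_x\,dy$ essentially by definition, one must here show that the $O(1)$ contribution $\nabla_y f(x_s,y_s)\,y_{s,x_i}$ of the fast derivative process conspires with $\nabla_x f$ to reproduce exactly the Jacobian of the averaged drift. Making this rigorous requires enough regularity of the map $x\mapsto\rho_x$, with integrable sensitivity against the at-most-linearly-growing $f$ --- which is where the uniform ellipticity of $\alpha_2\alpha_2^T$ in Assumption~\ref{assumption-1} and the Lyapunov estimate (\ref{mixing-1}) enter --- together with a linear-response / Poisson-equation computation. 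Everything else is the by-now-standard block-freezing and exponential-mixing machinery adapted from \cite{liu2010}, the only new ingredients being the fourth moments and the variation-process bounds of Theorem~\ref{main-result-1}.
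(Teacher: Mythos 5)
Your proof is correct and hits the same key ingredients as the paper's: block-freezing, exponential mixing of the frozen fast subsystem and its linearisation (Lemma~\ref{exp-converge}), the chain-rule identity $\nabla_x\widetilde f(x) = \bE^\xi\big[\nabla_x f(x,\xi^x_t) + \nabla_y f(x,\xi^x_t)\,\xi^x_{t,x}\big]$ --- which is exactly (\ref{chain-rule-wilde-f}) and is supplied by the stationary derivative process of Appendix~\ref{app-2}, so the ``main obstacle'' you flag is not actually a gap --- and a Gronwall closure. Where you depart from the paper is in the organisation of the error. The paper compares $x_{s,x_i}$ with $\widetilde x_{s,x_i}$ in two stages through the auxiliary derivative process $\hat x_{s,x_i}$ of (\ref{auxiliary-derivative}) (Lemmas~\ref{average-auxiliary-xy} and \ref{auxiliary-limit-dx}), which means a single $\Delta$ must simultaneously control the within-block increment (contributing $\Delta^{1/2}$), the mixing remainder ($\epsilon/\Delta$) and the auxiliary-trajectory error $(\bE|\hat x_s-\widetilde x_s|^4)^{1/2}\sim(\epsilon/\Delta+\Delta)^{1/2}$; the balance forces $\Delta=\epsilon^{1/2}$ and all three sources then contribute $\epsilon^{1/4}$. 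You instead subtract the equations for $x_{s,x_i}$ and $\widetilde x_{s,x_i}$ directly, isolating the genuinely oscillatory drift $G_s$ --- the only place block-averaging is needed --- from the Lipschitz corrections $R^{(1)}_s,R^{(2)}_s$, whose size $O(\epsilon^{1/4})$ follows from Theorem~\ref{main-result-2} alone, independently of $\Delta$. This decouples the choice of $\Delta$: you can push the $G$-contribution down to $O(\epsilon^{2/3})$ with $\Delta=\epsilon^{2/3}$, and the final $\epsilon^{1/4}$ is then cleanly attributed to the single bottleneck $\bE|x_s-\widetilde x_s|^4\sim\epsilon^{1/2}$. The exponent is the same, but your decomposition makes its origin more transparent. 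A couple of small supporting facts would need to be spelled out --- the fourth-moment bound for $\widetilde x_{s,x_i}$ (the averaged counterpart of Lemma~\ref{d-xi-4}) and the Lipschitz continuity of $\nabla_x\widetilde f$ (which follows from parts (2) and (3) of Lemma~\ref{lemma-stationary}, exactly as the paper uses it in the $I_{1,3}$ and $I_{2,3}$ estimates of Lemma~\ref{auxiliary-limit-dx}) --- but both are routine and fully covered by the machinery already in place.
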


From these results that will be proved in the remainder of this section, we then obtain: 
\begin{theorem}
  Let Assumptions \ref{assumption-1}--\ref{assumption-3} hold.
  Then $\exists C>0$, independent of $\epsilon$ 
  and can be chosen uniformly for $x$, $y$ which are contained in some bounded domain
  of $\mathbb{R}^k \times \mathbb{R}^l$, such that 
  \begin{enumerate}
    \item
    $|\nabla_y \phi^\epsilon| \le C\epsilon, \quad |\nabla_x \phi^\epsilon -
    \nabla_x \phi_0| \le C\epsilon^{\frac{1}{8}}$.
\item For $U^\epsilon = -\ln \phi^\epsilon$, $U_0= - \ln \phi_0$, we have  
  \begin{align}
    |\nabla_y U^\epsilon| \le C\epsilon, \qquad |\nabla_x U^\epsilon -
    \nabla_x U_0| \le C\epsilon^{\frac{1}{8}}\,.
  \end{align}
  \end{enumerate}
  \label{main-result-4}
\end{theorem}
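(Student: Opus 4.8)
The plan is to read the gradient estimates straight off the probabilistic representations~(\ref{derivative-formula}), feeding in the moment bounds of Theorem~\ref{main-result-1} and the strong approximation estimates of Theorems~\ref{main-result-2}--\ref{main-result-3}, together with the boundedness and Lipschitz properties granted by Assumption~\ref{assumption-1} and the fact (Assumption~\ref{assumption-3}) that $h$ depends on $x$ only. A first observation is an elementary two-sided bound on the value functions themselves: since $0\le h\le\|h\|_\infty$, the Feynman--Kac formula gives $e^{-\|h\|_\infty T}\le\phi^\epsilon,\phi_0\le 1$, uniformly in $\epsilon$ and in the initial data. This is exactly what lets part~2 be deduced from part~1: writing $\nabla U^\epsilon=-\nabla\phi^\epsilon/\phi^\epsilon$ and using the identity $\nabla_x U^\epsilon-\nabla_x U_0=-(\nabla_x\phi^\epsilon-\nabla_x\phi_0)/\phi^\epsilon+\nabla_x\phi_0\,(\phi^\epsilon-\phi_0)/(\phi^\epsilon\phi_0)$, the bounds from part~1 together with $|\nabla_x\phi_0|\le C$ (again from~(\ref{derivative-formula}), Assumption~\ref{assumption-1}, and Theorem~\ref{main-result-1}) and the crude estimate $|\phi^\epsilon-\phi_0|\le C\,\bE\int_0^T|x_s-\widetilde{x}_s|\,ds\le C\epsilon^{1/8}$ coming from Theorem~\ref{main-result-2} yield $|\nabla_y U^\epsilon|\le C\epsilon$ and $|\nabla_x U^\epsilon-\nabla_x U_0|\le C\epsilon^{1/8}$.

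For the bound $|\nabla_y\phi^\epsilon|\le C\epsilon$ I would use the middle line of~(\ref{derivative-formula}): since $h\ge0$ so that $e^{-\int h}\le1$, and $|\nabla_x h|\le C$ by Assumption~\ref{assumption-1}, Cauchy--Schwarz in $\Omega$ and in time give $|\partial_{y_i}\phi^\epsilon|\le C\int_0^T(\bE|x_{s,y_i}|^2)^{1/2}\,ds$, and the estimate $\bE|x_{s,y_i}|^2\le C\epsilon^2$ of Theorem~\ref{main-result-1} finishes it; the same computation bounds $|\nabla_y U^\epsilon|$ after dividing by $\phi^\epsilon\ge e^{-\|h\|_\infty T}$.

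The substantive step is $|\nabla_x\phi^\epsilon-\nabla_x\phi_0|\le C\epsilon^{1/8}$. I would couple $x_s$ (started at $(x,y)$) and $\widetilde{x}_s$ (started at $x$) through the same Brownian motion $w^1_s$, so the first-variation processes $x_{s,x_i}$ and $\widetilde{x}_{s,x_i}$ are driven consistently, and then write the difference of the two representations in~(\ref{derivative-formula}) as a sum of three telescoping terms. Term (i), $\bE[e^{-\int h(x_s)}\int_0^T\nabla_x h(x_s)\cdot(x_{s,x_i}-\widetilde{x}_{s,x_i})\,ds]$, is controlled by $C\int_0^T(\bE|x_{s,x_i}-\widetilde{x}_{s,x_i}|^2)^{1/2}\,ds\le C\epsilon^{1/8}$ using Theorem~\ref{main-result-3}. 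Term (ii), $\bE[e^{-\int h(x_s)}\int_0^T(\nabla_x h(x_s)-\nabla_x h(\widetilde{x}_s))\cdot\widetilde{x}_{s,x_i}\,ds]$, is bounded via the Lipschitz continuity of $\nabla_x h$ and Cauchy--Schwarz by $C\int_0^T(\bE|x_s-\widetilde{x}_s|^2)^{1/2}(\bE|\widetilde{x}_{s,x_i}|^2)^{1/2}\,ds\le C\epsilon^{1/8}$, where $\bE|x_s-\widetilde{x}_s|^2\le(\bE|x_s-\widetilde{x}_s|^4)^{1/2}\le C\epsilon^{1/4}$ by Theorem~\ref{main-result-2} and $\bE|\widetilde{x}_{s,x_i}|^2\le C$ by Theorems~\ref{main-result-1} and~\ref{main-result-3}. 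Term (iii), involving the difference $e^{-\int h(x_s)}-e^{-\int h(\widetilde{x}_s)}$, is handled with $|e^{-a}-e^{-b}|\le|a-b|$ for $a,b\ge0$, the Lipschitz continuity of $h$, and Cauchy--Schwarz in $\Omega$ and in time, giving a bound $C(\int_0^T\bE|x_s-\widetilde{x}_s|^2\,ds)^{1/2}(\int_0^T\bE|\widetilde{x}_{s,x_i}|^2\,ds)^{1/2}\le C\epsilon^{1/8}$. Summing the three yields the claim.

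The hard part will not be in the present argument, which is a fairly mechanical assembly, but in the ingredients it consumes: the exponent $\tfrac18$ is forced by taking square roots of the $L^2$-rate $\epsilon^{1/4}$ for $x_{s,x_i}-\widetilde{x}_{s,x_i}$ in Theorem~\ref{main-result-3} (equivalently the $L^4$-rate $\epsilon^{1/2}$ for $x_s-\widetilde{x}_s$ in Theorem~\ref{main-result-2}), and proving those two estimates — in particular controlling the first variation of the slow variable through the piecewise-frozen auxiliary process~(\ref{auxiliary}) and the exponential mixing of the fast dynamics from Assumption~\ref{assumption-2} — is where the real work of the section lies. One further point of care is that every constant must stay uniform over initial data in a fixed bounded domain; this is legitimate because Theorems~\ref{main-result-2}--\ref{main-result-3} are stated with exactly that uniformity, while Theorem~\ref{main-result-1} and the lower bound $\phi^\epsilon\ge e^{-\|h\|_\infty T}$ are uniform over all initial data.
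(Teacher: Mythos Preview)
Your proof is correct and follows essentially the same route as the paper: the probabilistic representations~(\ref{derivative-formula}) are combined with Theorems~\ref{main-result-1}--\ref{main-result-3} via Cauchy--Schwarz, and the uniform two-sided bound $e^{-\|h\|_\infty T}\le\phi^\epsilon,\phi_0\le 1$ is used to pass from $\phi$ to $U$. The only cosmetic differences are the order of the telescoping (the paper writes $(\nabla_x h(x_s)-\nabla_x h(\widetilde x_s))\cdot x_{s,x_i}+\nabla_x h(\widetilde x_s)\cdot(x_{s,x_i}-\widetilde x_{s,x_i})$, you swap the roles of $x_{s,x_i}$ and $\widetilde x_{s,x_i}$) and your slightly cleaner use of $|e^{-a}-e^{-b}|\le|a-b|$ in place of the paper's integral interpolation; both are equally valid.
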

\begin{proof}
  We use the representation formulas (\ref{derivative-formula}).
  For $\nabla_y \phi^\epsilon$, using Assumption~\ref{assumption-1} and
  Theorem~\ref{main-result-1}, we have 
  \begin{align*}
    |\partial_{y_i} \phi^\epsilon| \le & \bE\Big(e^{-\int_t^T h(x_s)
  ds}\int_t^T |\nabla_x h(x_s)| |x_{s,y_i}| ds \Big) \notag \\
  \le & C
  \bE\int_t^T |x_{s,y_i}| ds \le  C 
  \int_t^T (\bE|x_{s,y_i}|^2)^{\frac{1}{2}} ds
\le C\epsilon \notag\,.
  \end{align*}
  To compare $\nabla_x \phi^\epsilon$ with $\nabla_x \phi_0$, we compute that 
  \begin{align*}
    & |\partial_{x_i} \phi^\epsilon - \partial_{x_i}\phi_0| \\
    \le &
     \Big|\bE\Big[
  e^{-\int_t^T h(x_s) ds}\Big(\int_t^T \big(\nabla_x h(x_s) \cdot x_{s,x_i}-
\nabla_x h(\widetilde{x}_s) \cdot \widetilde{x}_{s,x_i}\big)
\,ds\Big)\Big]\Big| \\
&+  \Big|\bE\Big[
\Big(e^{-\int_t^T h(x_s) ds} - e^{-\int_t^T h(\widetilde{x}_s)
ds}\Big)
\Big(\int_t^T
\nabla_x h(\widetilde{x}_s) \cdot \widetilde{x}_{s,x_i}  \,ds\Big) \Big]\Big| \\
=& I_1 + I_2 \,.
\end{align*}
For $I_1$, using Assumption~\ref{assumption-1}, Theorem~\ref{main-result-2}
and Theorem~\ref{main-result-3}, it follows that 
\begin{align*}
  I_1 \le& \Big|\bE
  \Big(\int_t^T \big(\nabla_x h(x_s) \cdot x_{s,x_i}-
\nabla_x h(\widetilde{x}_s) \cdot \widetilde{x}_{s,x_i}\big)
\,ds\Big)\Big| \\
=& \Big|\bE
\Big(\int_t^T \big[\big(\nabla_x h(x_s) - \nabla_x h(\widetilde{x}_s)\big) \cdot x_{s,x_i}+
\nabla_x h(\widetilde{x}_s) \cdot (x_{s,x_i} - \widetilde{x}_{s,x_i}) \big]
\,ds\Big)\Big| \\
  \le & C \bE\Big[
   \int_t^T 
   \Big(|x_s - \widetilde{x}_s||x_{s,x_i}| + |x_{s,x_i}-
   \widetilde{x}_{s,x_i}|\Big)  \,ds\Big] \\
   \le & 
C 
\int_t^T \Big[\big(\bE|x_s - \widetilde{x}_s|^2\big)^{\frac{1}{2}}
  \big(\bE|x_{s,x_i}|^2\big)^{\frac{1}{2}}
+ \big(\bE|x_{s,x_i}- \widetilde{x}_{s,x_i}|^2\big)^{\frac{1}{2}}\Big]  \,ds
\le C\epsilon^{\frac{1}{8}}\,.
  \end{align*}
For $I_2$, we have
\begin{align*}
  I_2 \le&  \Big[\bE
  \Big(e^{-\int_t^T h(x_s) ds} - e^{-\int_t^T h(\widetilde{x}_s)
ds}\Big)^2\Big]^{\frac{1}{2}} \Big[\bE\Big(\int_t^T
\nabla_x h(\widetilde{x}_s) \cdot \widetilde{x}_{s,x_i}  \,ds\Big)^2\Big]^{\frac{1}{2}}\\
\le & C \Big\{\bE \Big[\int_0^1 e^{-\int_t^T (1-r) h(x_s) + rh(\widetilde{x}_s) ds} \Big(\int_t^T
|h(\widetilde{x}_s) - h(x_s)| ds\Big) dr\Big]^2\Big\}^{\frac{1}{2}}\Big(\bE\int_t^T
 |\widetilde{x}_{s,x_i}|^2  \,ds\Big)^{\frac{1}{2}}\\
 \le & C\Big(\bE\int_t^T |\widetilde{x}_s - x_s|^2
 ds\Big)^{\frac{1}{2}} \le C\epsilon^{\frac{1}{8}} \,,
 \end{align*}
which then entails the estimates for the derivatives of $\phi^\epsilon$.
Meanwhile, using a similar argument,  
\begin{align*}
  |\phi^\epsilon - \phi_0| =&
 \Big|\bE\Big(e^{-\int_t^T h(x_s) ds} - e^{-\int_t^T h(\widetilde{x}_s) ds}\Big)\Big| \\
\le & \bE \Big[\int_0^1 e^{-\int_t^T (1-r) h(x_s) + rh(\widetilde{x}_s) ds} \Big(\int_t^T
|h(\widetilde{x}_s) - h(x_s)| ds\Big) dr\Big] \\
\le & C\bE \Big(\int_t^T |h(\widetilde{x}_s) - h(x_s)| ds\Big) \\
\le & C \int_t^T \big(\bE|\widetilde{x}_s - x_s|^4 \big)^\frac{1}{4} ds\le C
\epsilon^{\frac{1}{8}}\,.
\end{align*}

Since $h$ is bounded by Assumption~\ref{assumption-1}, we have that 
$e^{-C(T-t)} \le \phi^\epsilon \le e^{C(T-t)}$ is uniformly bounded (and
bounded away from zero) for all $\epsilon>0$.
The conclusion concerning $|\nabla_y U^\epsilon|$ and $|\nabla_x U^\epsilon -
\nabla_x U_0|$ follows directly from the above estimates.
\qed
\end{proof}

Recall from Section~\ref{sec-setup} and Subsection~\ref{sub-sec-main-1} that
$\hat{u}$ denotes the optimal control as given by (\ref{optimal-u}) and that the
control $\hat{u}^0$ 
defined in (\ref{sub-control-average})  is a candidate for the suboptimal control which is used for estimating (\ref{exp-I}) with nearly optimal variance. Theorem~\ref{main_thm} that is entailed by the above
results expresses this fact, and we restate it for the readers' convenience: 
\begin{theorem}
  Let Assumptions \ref{assumption-1}--\ref{assumption-3}
  hold, and consider the importance sampling method for computing (\ref{exp-I}) under
  the dynamics (\ref{averaging-dynamics}). When the control $\hat{u}^0$ as given
  in (\ref{sub-control-average}) is used 
  to perform the importance sampling, the relative error
  (\ref{relative-error}) of the Monte Carlo
  estimator satisfies
  \begin{align}
    \mbox{RE}_{\hat{u}^0}(I) \le C\epsilon^{\frac{1}{8}} \notag
  \end{align}
  for $\epsilon \ll 1$ where $C > 0$ is a constant independent of $\epsilon$.
  \label{main_thm_repeat}
\end{theorem}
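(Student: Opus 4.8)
The plan is to reduce $\mbox{RE}_{\hat{u}^0}(I)$ to a single likelihood-ratio moment and then bound that moment with the gradient estimates of Theorem~\ref{main-result-4}; the real work is hidden in that theorem (equivalently in Theorems~\ref{main-result-1}--\ref{main-result-3}). Throughout I set $\beta=1$, which is permissible as explained at the start of Section~\ref{sec-proof}.

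\textbf{Step 1 (reduction).} Following Subsection~\ref{sub-sec-setup-2} with the concrete choice $u=\hat{u}^0$, I would introduce the auxiliary control $\tilde{\bar{u}}_s=2\hat{u}_s-\hat{u}^0_s$ together with its measure $\mathbf{\tilde{\bar{P}}}$, and chain the identities (\ref{variance}), (\ref{zt-ratio-exp}), (\ref{zt-ratio}) and (\ref{likelihood-exp}) to get
\begin{align*}
  \mbox{RE}_{\hat{u}^0}(I)^2 \;=\; \frac{\mathrm{Var}_{\hat{u}^0} I}{I^2} \;=\; \mathbf{\bar{E}}\Big[\Big(\tfrac{\hat{Z}_t}{Z_t}\Big)^{2}\Big]-1 \;=\; \mathbf{\tilde{\bar{E}}}\Big[\exp\Big(\int_t^T|\hat{u}_s-\hat{u}^0_s|^2\,ds\Big)\Big]-1 .
\end{align*}
The three measures $\mathbf{\bar{P}},\mathbf{\hat{P}},\mathbf{\tilde{\bar{P}}}$ are mutually equivalent because all controls involved are bounded (see Step~2), so Novikov's condition holds; under $\mathbf{\tilde{\bar{P}}}$ the process $(x_s,y_s)$ solves the controlled version of (\ref{averaging-dynamics}) driven by $\tilde{\bar{u}}_s$.

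\textbf{Step 2 (controlling $\hat{u}-\hat{u}^0$).} By the feedback relation (\ref{optimal-u}) and the block structure of the noise in (\ref{averaging-dynamics}) (using Assumption~\ref{assumption-3}, that $\alpha_1$ is independent of $y$), the optimal control splits as $\hat{u}_s=\big(\alpha_1^{T}\nabla_x U^\epsilon,\ \epsilon^{-1/2}\alpha_2^{T}\nabla_y U^\epsilon\big)$ and the candidate as $\hat{u}^0_s=\big(\alpha_1^{T}\nabla_x U_0,\ 0\big)$, both evaluated at the running state, so
\begin{align*}
  |\hat{u}_s-\hat{u}^0_s|^2 \;=\; \big|\alpha_1^{T}\big(\nabla_x U^\epsilon-\nabla_x U_0\big)\big|^2 \;+\; \tfrac{1}{\epsilon}\,\big|\alpha_2^{T}\nabla_y U^\epsilon\big|^2 .
\end{align*}
Since $\alpha_1,\alpha_2$ are bounded (Assumption~\ref{assumption-1}), Theorem~\ref{main-result-4} gives $\epsilon^{-1}|\alpha_2^{T}\nabla_y U^\epsilon|^2\le C\epsilon$ for the fast part and $|\alpha_1^{T}(\nabla_x U^\epsilon-\nabla_x U_0)|^2\le C\epsilon^{1/4}$ for the slow part, hence $|\hat{u}_s-\hat{u}^0_s|^2\le C\epsilon^{1/4}$. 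Two details need care: the gradient bound of Theorem~\ref{main-result-4} is uniform only on bounded spatial sets, so I would read it off at the current position $z_s$ (which is exactly the initial datum in the probabilistic representation (\ref{derivative-formula})), getting a bound of the form $C(1+|z_s|)^{q}\epsilon^{1/4}$, and combine it with the crude global bound $|\hat{u}_s-\hat{u}^0_s|\le C$ (from $|\nabla_x U^\epsilon|,|\nabla_x U_0|\le C$, which is immediate from (\ref{derivative-formula}) and Theorem~\ref{main-result-1}); and I would use the uniform-in-$\epsilon$ moment bound $\mathbf{\tilde{\bar{E}}}|z_s|^p\le C_p$, which follows from the linear growth of the coefficients and the dissipativity estimate (\ref{mixing-1}) for the fast variable. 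Together these yield $\mathbf{\tilde{\bar{E}}}\,|\hat{u}_s-\hat{u}^0_s|^2\le C\epsilon^{1/4}$ for every $s\le T$.

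\textbf{Step 3 (conclusion and main obstacle).} Since $|\hat{u}_s-\hat{u}^0_s|\le C$, the exponent $\int_t^T|\hat{u}_s-\hat{u}^0_s|^2\,ds$ is bounded by $C^2(T-t)$, so $e^{x}-1\le x\,e^{C^2(T-t)}$ on the relevant range and, by Steps~1--2,
\begin{align*}
  \mbox{RE}_{\hat{u}^0}(I)^2 \;=\; \mathbf{\tilde{\bar{E}}}\Big[\exp\Big(\int_t^T|\hat{u}_s-\hat{u}^0_s|^2\,ds\Big)-1\Big] \;\le\; e^{C^2(T-t)}\int_t^T \mathbf{\tilde{\bar{E}}}\,|\hat{u}_s-\hat{u}^0_s|^2\,ds \;\le\; C'\epsilon^{1/4},
\end{align*}
and taking square roots gives $\mbox{RE}_{\hat{u}^0}(I)\le C''\epsilon^{1/8}$. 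The hard part is not this bookkeeping but the input Theorem~\ref{main-result-4}, hence Theorems~\ref{main-result-1}--\ref{main-result-3}: making quantitative, with explicit $\epsilon$-rates, the strong approximation of the slow component of (\ref{averaging-dynamics}) and of its first-variation processes (\ref{1st-variation-x})--(\ref{1st-variation-y}) by the averaged system (\ref{dynamics-averaged}). This is where one must insert the frozen-fast auxiliary process (\ref{auxiliary}), exploit the exponential mixing of the fast dynamics from Assumption~\ref{assumption-2}, and propagate fourth-moment and derivative estimates through Gronwall-type arguments uniformly in $\epsilon$; the fractional powers of $\epsilon$ cascade from the $\epsilon^{1/2}$ strong-convergence rate of Theorem~\ref{main-result-2} down through Theorems~\ref{main-result-3}--\ref{main-result-4} to the final exponent $\tfrac{1}{8}$, and the spatial non-uniformity of those estimates has to be absorbed by the moment bounds as in Step~2.
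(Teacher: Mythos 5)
Your Step 1 (reducing $\mbox{RE}_{\hat{u}^0}(I)^2$ to $\mathbf{\tilde{\bar{E}}}\big[\exp\int_t^T|\hat u_s-\hat u^0_s|^2\,ds\big]-1$ via the auxiliary control $\tilde{\bar u}=2\hat u-\hat u^0$) matches the paper's reduction exactly, and the block split of $\hat u_s$ into $(\alpha_1^T\nabla_x U^\epsilon,\ \epsilon^{-1/2}\alpha_2^T\nabla_y U^\epsilon)$ is correct and in fact makes the source of the two rates ($\epsilon^{1/4}$ from the slow gradient, $\epsilon$ from the fast gradient after the $\epsilon^{-1}$ penalty) more transparent than the paper does. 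Where you diverge is in how the spatial non-uniformity of Theorem~\ref{main-result-4} is dealt with. The paper splits the running integral by a spatial cutoff $\chi_R$, uses boundedness of the controls off $\{|x|,|y|\le R\}$, and controls the off-$R$ contribution with the occupation-time / Chebyshev estimate of Lemma~\ref{lemma-occupation-time}, which only requires the fourth-moment bounds (Lemma~\ref{lemma-4th-stability}) that the paper actually proves. You instead propose to (i) upgrade the "uniform on bounded sets" statement of Theorem~\ref{main-result-4} to a global polynomial bound $C(1+|z_s|)^q\epsilon^{1/4}$, (ii) invoke uniform-in-$\epsilon$ moment bounds $\mathbf{\tilde{\bar{E}}}|z_s|^p\le C_p$, and (iii) conclude $\mathbf{\tilde{\bar{E}}}|\hat u_s-\hat u^0_s|^2\le C\epsilon^{1/4}$ before applying $e^x-1\le xe^M$. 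That is a genuinely different route and is arguably cleaner if the ingredients hold, but as written it relies on two things the paper does not establish: the polynomial (rather than merely locally uniform) spatial growth of the constant in Theorems~\ref{main-result-2}--\ref{main-result-4}, and moments of order beyond four for the controlled dynamics. If one only has a local bound together with fourth moments and optimizes the cutoff radius, the exponent degrades below $1/4$; recovering the full $\epsilon^{1/4}$ needs either a truly global polynomial bound or moments of all orders, and you should flag that these require strengthening the supporting lemmas. Your Step 3 bookkeeping ($\int |\hat u-\hat u^0|^2\,ds$ a.s. bounded, hence $e^x-1\le xe^M$ applies) is fine. Also note, as a presentational point, that the paper's final display $\mathbf{\tilde{\bar{E}}}[\exp(\cdot)]\le 2e^{C(T-t)\epsilon^{1/4}}$ is itself loosely stated — for the claimed rate the bracket $e^{C\delta}+CT(1+|x|^4+|y|^4)/(\delta R^4)$ must in fact be driven to $1+O(\epsilon^{1/4})$ with $\delta,R$ chosen $\epsilon$-dependently while keeping $C_R\epsilon^{1/4}$ small — so neither route is fully airtight in its last step without being explicit about how the cutoff parameters scale with $\epsilon$.
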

\begin{proof}
  In the following we will regard the optimal control $\hat{u}$ and
  control $\hat{u}^0$ as functions of $t,x$ and $y$. 
Using (\ref{optimal-u}) and (\ref{sub-control-average}), 
we see that Theorem~\ref{main-result-4} implies that $|\hat{u}_s-\hat{u}_s^0| \le C \epsilon^{\frac{1}{8}}$ uniformly on
  $[0,T] \times D$ where $D$ is any bounded domain of $\mathbb{R}^k \times
  \mathbb{R}^l$ and constant $C$ depends on domain $D$. Furthermore, both of them are uniformly bounded on $[0,T] \times \mathbb{R}^k \times
  \mathbb{R}^l$ from the boundedness of $\phi^\epsilon, \alpha_1, \alpha_2$ and formula (\ref{derivative-formula}).
  
  Now call $\tilde{\bar{x}}^u_s,\tilde{\bar{y}}^u_s$ the controlled dynamics 
  of (\ref{averaging-dynamics}) corresponding to the control $\tilde{\bar{u}}_s = 2 \hat{u}_s
  - \hat{u}^0_s$. Specifically, using (\ref{optimal-u}) and
  (\ref{sub-control-average}) again, we have (for $\beta = 1$ and assume
  Assumption~\ref{assumption-3})
  \begin{align}
  \begin{split}
    d\tilde{\bar{x}}^u_s &= f(\tilde{\bar{x}}^u_s,\tilde{\bar{y}}^u_s) ds -
    \alpha_1(\tilde{\bar{x}}^u_s)\alpha_1^T(\tilde{\bar{x}}^u_s)
    \big(2\nabla_x U^\epsilon(\tilde{\bar{x}}^u_s,\tilde{\bar{y}}^u_s) -
    \nabla_x U_0(\tilde{\bar{x}}^u_s)\big)\,ds + \alpha_1(\tilde{\bar{x}}^u_s) dw_s^1\\
    d\tilde{\bar{y}}^u_s &= \frac{1}{\epsilon} g(\tilde{\bar{x}}^u_s,\tilde{\bar{y}}^u_s) ds
    -\frac{2}{\epsilon}
    \alpha_2(\tilde{\bar{x}}^u_s,
    \tilde{\bar{y}}^u_s)\alpha^T_2(\tilde{\bar{x}}^u_s,\tilde{\bar{y}}^u_s)
    \nabla_y U^\epsilon(\tilde{\bar{x}}^u_s,\tilde{\bar{y}}^u_s)\,ds
    +
    \frac{1}{\sqrt{\epsilon}}\alpha_2(\tilde{\bar{x}}^u_s,\tilde{\bar{y}}^u_s)
    dw_s^2 \,,
\end{split}
\label{controlled-averaging-dynamics}
\end{align}
and control $\tilde{\bar{u}}_s$ is bounded on $[0,T] \times
\mathbb{R}^k\times \mathbb{R}^l$ uniformly for $\epsilon$.
This especially implies that Lemma~\ref{lemma-4th-stability} and Lemma~\ref{lemma-occupation-time} in 
Subsection~\ref{sub-sec-stability} also hold for dynamics
$\tilde{\bar{x}}^u_s, \tilde{\bar{y}}^u_s$ (see
Remark~\ref{rmk-occupation-time}).

  Let $R > 0$ and for $y \in \mathbb{R}^l$, we define $\chi_R(y) = 1$, if $|y| \le R$, and $\chi_R(y) = 0$
otherwise. Similarly, for $x \in \mathbb{R}^k, y\in \mathbb{R}^l$, we define
$\chi_R(x,y) = 1$, if both $|x|, |y| \le R$, and 
$\chi_R(x,y) = 0$ otherwise. Then applying the uniform approximation
$|\hat{u}_s-\hat{u}_s^0| \le C_R \epsilon^{\frac{1}{8}}$ on bounded domain
defined by $\chi_R(x,y)$ and using the boundedness of both controls, we can recast (\ref{likelihood-exp}) as 
\begin{align}
  & \mathbf{\tilde{\bar{E}}} \Big[\exp\Big(\int_t^T |\hat{u}_s - \hat{u}^0_s|^2
\chi_R(\tilde{\bar{x}}^u_s, \tilde{\bar{y}}^u_s) ds + \int_t^T |\hat{u}_s
- \hat{u}^0_s|^2
\big(1 - \chi_R(\tilde{\bar{x}}^u_s, \tilde{\bar{y}}^u_s)\big) ds \Big)\Big] \notag \\
\le & e^{C_R (T - t) \epsilon^{\frac{1}{4}}}
\mathbf{\tilde{\bar{E}}} \Big[\exp\Big(\int_t^T |\hat{u}_s
- \hat{u}^0_s|^2
\big(1 - \chi_R(\tilde{\bar{x}}^u_s, \tilde{\bar{y}}^u_s)\big) ds \Big)\Big] \notag \\
\le & e^{C_R (T - t) \epsilon^{\frac{1}{4}}}
\mathbf{\tilde{\bar{E}}} \Big[\exp\Big(C\int_t^T \big(1 -
\chi_R(\tilde{\bar{x}}^u_s, \tilde{\bar{y}}^u_s)\big) ds \Big)\Big] \notag \\
\le &  e^{C_R (T - t) \epsilon^{\frac{1}{4}}}
\Big[e^{C\delta} + e^{C T}
\mathbf{P} \Big(\int_t^T \big(1 - \chi_R(\tilde{\bar{x}}^u_s,
\tilde{\bar{y}}^u_s)\big) ds
\ge \delta\Big)
\Big] \label{bound-tmp}
\end{align}
where $\delta > 0$ and $C_R$ is a constant that depends on $R>0$. 
In the last inequality we have split the expectation according to the event
$\big\{\int_t^T \big(1 - \chi_R(\tilde{\bar{x}}^u_s, \tilde{\bar{y}}^u_s)\big) ds
\ge \delta\big\}$ and its complement.
Therefore, applying the conclusion of Lemma~\ref{lemma-occupation-time} to
processes $\tilde{\bar{x}}^u_s, \tilde{\bar{y}}^u_s$, we
can bound the above quantity (\ref{bound-tmp}) by
\begin{align}
e^{C_R (T - t) \epsilon^{\frac{1}{4}}}
\Big[e^{C\delta} + e^{C T}
  \frac{CT(1 + |x|^4 + |y|^4)}{\delta R^4} \Big] \notag \,.
\end{align}
Now we can first choose a small $\delta$ and then a large $R$ such that  
\begin{align}
\mathbf{\tilde{\bar{E}}} \Big[\exp\Big(\int_t^T |\hat{u}_s -
\hat{u}^0_s|^2 ds\Big)\Big]
\le 2e^{C (T - t) \epsilon^{\frac{1}{4}}} \notag
\end{align}
where the constant $C>0$ is independent of $\epsilon$. Combining this with
(\ref{variance}) and  (\ref{relative-error}), (\ref{zt-ratio-exp}), (\ref{likelihood-exp}), we conclude that 
\begin{align}
  \mbox{RE}_{\hat{u}^0}(I) \le C\epsilon^{\frac{1}{8}} \notag
\end{align}
whenever $\epsilon$ is sufficiently small.
\qed
\end{proof}

\subsection{Estimates for processes $x_{s,y_i}$ and $y_{s,y_i}$}

\noindent
We first consider the processes $x_{s,y_i}$ and $y_{s,y_i}$ in (\ref{1st-variation-y}), since the arguments are simpler and
largely unrelated to the rest of the proof. In the following and throughout
this section, we denote by $C$ a generic constant that is independent of
$\epsilon$ and whose value may change from line to line. 
Also recall H\"older and Young's inequalities : Given two random variables $X, Y$, and $p, q > 0$ with $\frac{1}{p} + \frac{1}{q} = 1$, it holds that 
\begin{align}
\bE |XY| \le \big(\bE|X|^p\big)^{\frac{1}{p}}\big(\bE|Y|^q\big)^{\frac{1}{q}} \le 
\frac{\bE|X|^p}{p} + \frac{\bE|Y|^q}{q}\,. 
\label{hodler-young-ineq}
\end{align}

\begin{lemma}
  Under Assumptions \ref{assumption-1}--\ref{assumption-2}, there exists $C>0$,
  independent of $\epsilon$, $x_0$ and $y_0$, such that 
  \begin{align}
    \max_{0\le s \le T} \bE|x_{s,y_i}|^2 \le C\epsilon, \quad  \bE|y_{t,y_i}|^2 \le
    e^{-\frac{\lambda t}{\epsilon}} + C\epsilon, \quad t \in [0, T], \quad 1 \le i \le l.
  \end{align}
  \label{y-derivative-bound}
\end{lemma}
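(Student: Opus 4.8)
The plan is to reduce everything to two coupled scalar Gronwall inequalities for $a(s):=\bE|x_{s,y_i}|^2$ and $b(s):=\bE|y_{s,y_i}|^2$, and then to close them with a suitably weighted Lyapunov function followed by a bootstrap.

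First I would apply It\^o's formula to $|y_{s,y_i}|^2$ along the linear SDE (\ref{1st-variation-y}). Its drift equals $\frac{2}{\epsilon}\langle y_{s,y_i},\,\nabla_x g\,x_{s,y_i}+\nabla_y g\,y_{s,y_i}\rangle+\frac{1}{\epsilon}\|\nabla_x\alpha_2\,x_{s,y_i}+\nabla_y\alpha_2\,y_{s,y_i}\|^2$ plus a local martingale. Separating the cross terms, using $\|\nabla_x\alpha_2\,x_{s,y_i}+\nabla_y\alpha_2\,y_{s,y_i}\|^2\le 2\|\nabla_x\alpha_2\,x_{s,y_i}\|^2+2\|\nabla_y\alpha_2\,y_{s,y_i}\|^2$ and the dissipativity bound (\ref{mixing}) (the harmless constant $3$ in (\ref{mixing}) is exactly what absorbs this factor-$2$ loss on the noise cross term), the ``pure $y$'' part of the drift is $\le-2\lambda|y_{s,y_i}|^2$, while the remaining $x_{s,y_i}$-terms are controlled by Young's inequality together with the boundedness of $\nabla_x g,\nabla_x\alpha_2$ from Assumption~\ref{assumption-1}. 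Since the coefficients of (\ref{1st-variation-y}) are bounded, a standard localization removes the martingale after taking expectations, leaving $b'(s)\le\frac{1}{\epsilon}\big(-\lambda\, b(s)+C\, a(s)\big)$ with $b(0)=|y_{0,y_i}|^2=1$. The analogous but easier computation for $|x_{s,y_i}|^2$, where \emph{all} first derivatives of the coefficients are bounded by Assumption~\ref{assumption-1}, gives $a'(s)\le C\big(a(s)+b(s)\big)$ with $a(0)=|x_{0,y_i}|^2=0$. All constants here depend only on $T$ and on the bounds in Assumptions~\ref{assumption-1}--\ref{assumption-2}, hence are independent of $\epsilon$, $x_0$, $y_0$, the derivative processes starting from fixed deterministic data.

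To close this system I would set $V(s):=\kappa\, a(s)+\epsilon\, b(s)$ and choose $\kappa:=\min\{1,\lambda/(2C)\}$, so that the coefficient $\kappa C-\lambda$ multiplying $b(s)$ in $V'(s)\le(\kappa C+C)\,a(s)+(\kappa C-\lambda)\,b(s)$ is $\le-\lambda/2<0$ and may be discarded. Using $\kappa\, a(s)\le V(s)$ this yields $V'(s)\le\frac{(\kappa+1)C}{\kappa}V(s)$, and Gronwall's inequality with $V(0)=\epsilon$ gives $V(s)\le\epsilon\,\exp\!\big(\frac{(\kappa+1)CT}{\kappa}\big)=:C'\epsilon$ on $[0,T]$. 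In particular $a(s)=\bE|x_{s,y_i}|^2\le C'\epsilon/\kappa\le C\epsilon$, which is the first claimed bound. The Lyapunov estimate by itself only gives $\epsilon\, b(s)\le V(s)\le C'\epsilon$, i.e. $b=O(1)$; to recover the sharp transient decay I would bootstrap: inserting $a(s)\le C\epsilon$ into the inequality for $b$ turns it into $b'(s)\le-\frac{\lambda}{\epsilon}b(s)+C$, and multiplying by $e^{\lambda s/\epsilon}$ and integrating from $0$ to $t$ gives $b(t)\le e^{-\lambda t/\epsilon}b(0)+\frac{C\epsilon}{\lambda}\big(1-e^{-\lambda t/\epsilon}\big)\le e^{-\lambda t/\epsilon}+C\epsilon$, which is the second claimed bound.

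The computations are otherwise routine; the one point requiring genuine care is the structure of the final argument, namely the weight $\epsilon$ in front of $b$ in $V$ (it balances the $O(1/\epsilon)$ dissipation of $b$ against the $O(1)$ growth of $a$ and keeps $V(0)=O(\epsilon)$) together with the subsequent bootstrap, which is what upgrades the crude $O(1)$ bound on $b$ to the exponentially-decaying-plus-$O(\epsilon)$ estimate.
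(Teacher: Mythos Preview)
Your proof is correct and reaches exactly the same pair of differential inequalities as the paper does: after It\^o's formula, Young's inequality, and the dissipativity bound~(\ref{mixing}), both you and the paper arrive at
\[
a'(s)\le C\big(a(s)+b(s)\big),\qquad b'(s)\le -\tfrac{\lambda}{\epsilon}\,b(s)+\tfrac{C}{\epsilon}\,a(s),
\]
with $a(0)=0$, $b(0)=1$. Where you diverge from the paper is in how you close this system. The paper invokes a separate elementary result (Claim~\ref{claim-2} in Appendix~\ref{app-1}): it first applies Gronwall to $b$ to get $b(t)\le e^{-\lambda t/\epsilon}+\tfrac{C}{\lambda}\max_{r\le t}a(r)$, substitutes this into the Gronwall representation for $a$, uses monotonicity of the right-hand side to pass to $\max_{s\le t}a(s)$, and then applies the integral Gronwall inequality to $t\mapsto\max_{s\le t}a(s)$. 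Your route is instead a weighted Lyapunov function $V=\kappa a+\epsilon b$ with $\kappa$ chosen so the $b$-coefficient in $V'$ is negative, yielding $V(s)\le V(0)e^{CT}=C'\epsilon$ directly, followed by a bootstrap for the sharp transient in $b$. Both arguments are standard for slow--fast linear systems; yours is arguably cleaner for the $a$-bound (one Gronwall step instead of two nested ones), while the paper's approach isolates the ODE lemma once and reuses it verbatim in later proofs (Lemma~\ref{x-derivative-bound}, Lemma~\ref{lemma-4th-stability}). One small quibble: your parenthetical remark that the constant $3$ in (\ref{mixing}) ``is exactly what absorbs this factor-$2$ loss'' overstates things---after your split the pure-$y$ noise coefficient is $2$, not $3$, so in fact (\ref{mixing}) with any constant $\ge 1$ would already suffice; the slack is harmless.
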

\begin{proof}
Recall the notation in (\ref{notation-explain}) 
and apply Ito's formula to $|x_{s,y_i}|^2$ and $|y_{s,y_i}|^2$. 
After taking expectations, equation (\ref{1st-variation-y}) yields   
\begin{align}
  \begin{split}
  &d \bE|x_{s,y_i}|^2 = 2\bE\langle \nabla_{x} f\,x_{s,y_i}, x_{s,y_i}\rangle ds +
    2\bE\langle \nabla_{y} f\,y_{s,y_i}, x_{s,y_i}\rangle  ds + \bE\|\nabla_{x}
    \alpha_1\, x_{s,y_i} + \nabla_{y} \alpha_1 \,y_{s,y_i}\|^2
  ds \\
  &d \bE|y_{s,y_i}|^2 = \frac{2}{\epsilon} \bE\langle \nabla_{x} g\,x_{s,y_i},
  y_{s,y_i}\rangle ds + \frac{2}{\epsilon} \bE\langle \nabla_{y} g\,y_{s,y_i},
  y_{s,y_i}\rangle ds + \frac{1}{\epsilon} \bE\|\nabla_{x} \alpha_2\,x_{s,y_i} + \nabla_{y}
  \alpha_2\, y_{s,y_i}\|^2
  ds \,,
\end{split}
\label{square-exp-y}
\end{align}
where $\|\cdot\|$ denotes the Frobenius norm of a matrix.
Then, using the Cauchy-Schwarz inequality, Lipschitz continuity of the
coefficients (Assumption~\ref{assumption-1}) and inequality (\ref{mixing}) in Remark~\ref{rmk-1}, it follows that  
\begin{align}
  \begin{split}
  &\frac{d\bE|x_{s,y_i}|^2}{ds} \le C \big(\bE|x_{s,y_i}|^2 +
    \bE|y_{s,y_i}|^2\big) \\
  &\frac{d \bE|y_{s,y_i}|^2}{ds} \le -\frac{\lambda}{\epsilon}
    \bE|y_{s,y_i}|^2 + \frac{C}{\epsilon} \bE|x_{s,y_i}|^2
\end{split}
\label{y-derivative-inq}
\end{align}
with $\bE|x_{0,y_i}|^2 = 0$, $\bE|y_{0,y_i}|^2 = 1$. The conclusion then follows
from Claim~\ref{claim-2} in Appendix~\ref{app-1}.
\qed
\end{proof}

The above result can be improved if we additionally impose Assumption~\ref{assumption-3} and if we
treat the initial layer near $t=0$ more carefully.
\begin{theorem}
  Let Assumptions \ref{assumption-1}--\ref{assumption-3} hold.
  Then $\exists C>0$, independent of $\epsilon$, $x_0$ and $y_0$, such that 
  \begin{align}
    \max_{0\le s \le T} \bE|x_{s,y_i}|^2 \le C\epsilon^2, \qquad  \bE|y_{t,y_i}|^2 \le
    e^{-\frac{\lambda t}{\epsilon}} + C\epsilon^2, \quad t \in [0, T]\,, \quad
    1 \le i \le l\,. \notag
  \end{align}
  \label{y-derivative-bound-2}
\end{theorem}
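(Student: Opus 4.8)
The plan is to sharpen the proof of Lemma~\ref{y-derivative-bound} by using Assumption~\ref{assumption-3} to remove the single term that obstructs an $O(\epsilon^2)$ bound, and then to close the estimate by a self-improving (Gronwall-type) argument in the sup-norm that automatically accounts for the initial layer near $t=0$.

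First I would repeat the It\^o computation for $|x_{s,y_i}|^2$ and $|y_{s,y_i}|^2$ as in (\ref{square-exp-y}), now using that $\nabla_y\alpha_1=0$ by Assumption~\ref{assumption-3}. Writing $a_s=\bE|x_{s,y_i}|^2$ and $b_s=\bE|y_{s,y_i}|^2$, the diffusion term of the $x_{s,y_i}$-equation then contributes only $\bE\|\nabla_x\alpha_1\,x_{s,y_i}\|^2\le C a_s$, rather than the extra $Cb_s$ that appeared in Lemma~\ref{y-derivative-bound}. Together with Assumption~\ref{assumption-1}, the Cauchy--Schwarz inequality and (\ref{mixing}), this yields
\begin{align}
  \frac{d a_s}{ds}\le C a_s + C\sqrt{a_s b_s}\,,\qquad \frac{d b_s}{ds}\le -\frac{\lambda}{\epsilon} b_s + \frac{C}{\epsilon} a_s\,,\qquad a_0=0,\ b_0=1\,. \notag
\end{align}
The disappearance of the $Cb_s$ term is the decisive point: were it present, integrating the associated $C\int_0^s e^{-\lambda r/\epsilon}\,dr$ would cap the rate at $O(\epsilon)$.

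Next, solving the $b$-inequality by variation of constants and introducing $A_s:=\sup_{0\le r\le s}a_r$ (finite, e.g.\ by Lemma~\ref{y-derivative-bound}), one gets $b_s\le e^{-\lambda s/\epsilon}+\frac{C}{\lambda}A_s$, hence $\sqrt{b_s}\le e^{-\lambda s/(2\epsilon)}+\sqrt{C/\lambda}\,\sqrt{A_s}$. Substituting into the $a$-inequality and using $\sqrt{a_s}\le\sqrt{A_s}$ gives $\frac{d a_s}{ds}\le C''A_s + C e^{-\lambda s/(2\epsilon)}\sqrt{A_s}$ with $C''=C+C\sqrt{C/\lambda}$; integrating from $0$ and using $\int_0^s e^{-\lambda r/(2\epsilon)}\,dr\le 2\epsilon/\lambda$ together with the monotonicity of $A$ yields
\begin{align}
  A_s\le C''\int_0^s A_r\,dr + \frac{2C\epsilon}{\lambda}\sqrt{A_s}\,,\qquad 0\le s\le T\,. \notag
\end{align}
Setting $m:=\sup_{0\le s\le T}\sqrt{A_s}<\infty$, Gronwall's inequality applied to $A_s\le C''\int_0^s A_r\,dr+\frac{2C\epsilon}{\lambda}m$ gives $A_s\le \frac{2C\epsilon}{\lambda}m\,e^{C''T}$; taking the supremum over $s$ forces $m\le \frac{2C}{\lambda}e^{C''T}\epsilon$, so that $\max_{0\le s\le T}\bE|x_{s,y_i}|^2=m^2\le C\epsilon^2$ with an $\epsilon$-independent constant, and then $\bE|y_{t,y_i}|^2=b_t\le e^{-\lambda t/\epsilon}+\frac{C}{\lambda}A_t\le e^{-\lambda t/\epsilon}+C\epsilon^2$, which is the assertion.

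The main obstacle I expect is gaining the \emph{second} power of $\epsilon$: a plain Gronwall estimate, or a single bootstrap starting from the bound $a_s\le C\epsilon$ of Lemma~\ref{y-derivative-bound}, only recovers $O(\epsilon)$, because the initial layer enters the $a$-estimate through the term $\epsilon\sqrt{A_s}$, which is merely $O(\epsilon^{3/2})$ under the a priori bound. The fix is the self-referential inequality for $A_s=\sup_{r\le s}a_r$ above, in which the $\sqrt{A_s}$ on the right is absorbed into the left after passing to the global supremum. One should also verify that the Gronwall constant $e^{C''T}$ is independent of $\epsilon$ (it is, since $C''$ involves only the constants from Assumptions~\ref{assumption-1}--\ref{assumption-2}) and that the expectations in the It\^o formulas are legitimate, which holds exactly as in Lemma~\ref{y-derivative-bound}.
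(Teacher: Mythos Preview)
Your argument is correct and takes a genuinely different route from the paper's proof. Both proofs start from the same It\^o identities and both exploit that Assumption~\ref{assumption-3} kills the $\nabla_y\alpha_1$ contribution in the $a_s$-equation; the divergence is in how the remaining cross term $2\bE\langle\nabla_y f\,y_{s,y_i},x_{s,y_i}\rangle$ is handled. The paper splits it via a \emph{time-dependent} Young inequality with weight $\epsilon^{\pm\gamma(s)}$, where $\gamma$ decreases linearly from $1$ to $0$ on the initial layer $[0,t_1]$ with $t_1=-2\epsilon\ln\epsilon/\lambda$, and then invokes a tailored ODE comparison (Claim~\ref{claim-3}) to deal with the resulting system. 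You instead keep the cross term as $C\sqrt{a_s b_s}$ by Cauchy--Schwarz, feed in the variation-of-constants bound for $b_s$, and close by a self-referential Gronwall argument on the running supremum $A_s$, absorbing the $\epsilon\sqrt{A_s}$ term into the left side at the global maximum. Your route is more elementary---it avoids the explicit layer cutoff and the auxiliary Claim~\ref{claim-3}---and the closing step ($m^2\le C\epsilon\,m\Rightarrow m\le C\epsilon$) makes transparent why exactly the second power of $\epsilon$ is gained. The paper's approach, on the other hand, stays within the linear-differential-inequality framework of Claim~\ref{claim-2}/\ref{claim-3}, which may be more convenient if one wants to push the same machinery to higher moments or different couplings.
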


\begin{proof}
  Applying Ito's formula in the same way as in Lemma~\ref{y-derivative-bound}
  and noticing that the coefficient $\alpha_1$ is independent of $y$, we can obtain 
\begin{align}
  \begin{split}
  &d \bE|x_{s,y_i}|^2 = 2\bE\langle \nabla_{x} f\,x_{s,y_i}, x_{s,y_i}\rangle ds +
    2\bE\langle \nabla_{y} f\,y_{s,y_i}, x_{s,y_i}\rangle  ds + \bE\|\nabla_{x}
    \alpha_1\, x_{s,y_i}\|^2
  ds \\
  &d \bE|y_{s,y_i}|^2 = \frac{2}{\epsilon} \bE\langle \nabla_{x} g\,x_{s,y_i},
  y_{s,y_i}\rangle ds + \frac{2}{\epsilon} \bE\langle \nabla_{y} g\,y_{s,y_i},
  y_{s,y_i}\rangle ds + \frac{1}{\epsilon} \bE\|\nabla_{x} \alpha_2\,x_{s,y_i} + \nabla_{y}
  \alpha_2\, y_{s,y_i}\|^2
  ds \,.
\end{split}
\label{square-exp-y-assump3}
\end{align}

Now set $t_1 = -\frac{2\epsilon\ln \epsilon}{\lambda}$ 
and introduce the function $\gamma \colon [0,T] \rightarrow [0, 1]$ by
\begin{align}
  \gamma(t) =\left\{
  \begin{array}{cl}
    1- \frac{t}{t_1} &\qquad 0 \le t \le t_1 	\\
    0 &\qquad  t_1 < t \le T
  \end{array}
  \right.
\end{align}
Then using the Cauchy-Schwarz inequality and the Lipschitz condition in
Assumption~\ref{assumption-1}, we have 
\begin{align}
  &  \bE\langle \nabla_{y} f\, y_{s,y_i}, x_{s,y_i}\rangle  \le
  C\Big(\epsilon^{-\gamma(s)}
  \frac{\bE|x_{s,y_i}|^2}{2}+ \epsilon^{\gamma(s)} \frac{\bE|y_{s,y_i}|^2}{2}\Big) \notag \\
  &\bE\langle \nabla_{y} g\, x_{s,y_i}, y_{s,y_i}\rangle  \le \frac{C^2}{\lambda}
  \frac{\bE|x_{s,y_i}|^2}{2}+ \lambda \frac{\bE|y_{s,y_i}|^2}{2}\,.  \notag
\end{align}
Substituting them into (\ref{square-exp-y-assump3}) and applying inequality
(\ref{mixing}) in Remark~\ref{rmk-1}, we find
\begin{align}
  & \frac{d\bE|x_{s,y_i}|^2}{ds} \le C(1+\epsilon^{-\gamma(s)})
  \bE|x_{s,y_i}|^2 + C\epsilon^{\gamma(s)} \bE|y_{s,y_i}|^2 \notag \\
  &\frac{d \bE|y_{s,y_i}|^2}{ds} \le -\frac{\lambda}{\epsilon} \bE|y_{s,y_i}|^2 + \frac{C}{\epsilon}
\bE|x_{s,y_i}|^2\,, \notag
\end{align}
with $\bE|x_{0,y_i}|^2 = 0$, $\bE|y_{0,y_i}|^2 = 1$. The conclusion follows from
Claim~\ref{claim-3} in Appendix~\ref{app-1}. 
\qed
\end{proof}

\subsection{Stability estimates}
\label{sub-sec-stability}

\noindent
We start with some basic facts related to the stability of the dynamics
(\ref{averaging-dynamics}), (\ref{dynamics-averaged}), (\ref{1st-variation-x}) and (\ref{auxiliary}).
Bear in mind that $\beta = 1$ throughout this section. For processes $x_s, y_s$ satisfying (\ref{averaging-dynamics}), we have:

\begin{lemma}
  Under Assumption \ref{assumption-1}, \ref{assumption-2}, there exists $C>0$,
  independent of $\epsilon$, $x_0$ and $y_0$, such that 
  \begin{align}
      \max_{0\le s \le T} \bE|x_s|^4 \le  C \big(|x_0|^4 + |y_0|^4 + 1\big), \qquad 
   \max_{0\le s \le T} \bE|y_s|^4 \le  C\big(|y_0|^4 + |x_0|^4 + 1\big).
  \end{align}
  \label{lemma-4th-stability}
\end{lemma}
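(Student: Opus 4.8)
The plan is to apply It\^o's formula to $|x_s|^4$ and $|y_s|^4$, take expectations, and derive a closed coupled system of Gr\"onwall-type inequalities for $u(s):=\bE|x_s|^4$ and $v(s):=\bE|y_s|^4$, with the singular $\epsilon^{-1}$ terms from the fast equation controlled via the dissipativity bound (\ref{mixing-1}). Assumption~\ref{assumption-1} makes all coefficients Lipschitz with linear growth, so (\ref{averaging-dynamics}) has a unique strong solution whose fourth moments are finite on $[0,T]$ (a priori not uniformly in $\epsilon$); this finiteness is all that is needed to justify the computations once the stochastic integrals are removed by the usual localization (stopping times together with dominated convergence, the integrands being dominated by $C(|x_r|^4+|y_r|^4+1)$).

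\textbf{Fast variable.} Using $\nabla^2(|y|^4)=4|y|^2 I+8yy^T$ and $|\alpha_2^Ty|^2\le\|\alpha_2\|^2|y|^2$, It\^o's formula gives, after localization and taking expectations,
\begin{align}
\bE|y_s|^4 \le |y_0|^4 + \frac{4}{\epsilon}\int_0^s \bE\Big[|y_r|^2\Big(\langle g(x_r,y_r),y_r\rangle+\tfrac{3}{2}\|\alpha_2(x_r,y_r)\|^2\Big)\Big]\,dr\,. \notag
\end{align}
Inserting (\ref{mixing-1}) and then applying Young's inequality to $|y_r|^2(|x_r|^2+1)$ with a parameter small enough to keep a strictly negative multiple of $|y_r|^4$, one obtains (after relabelling the constants $\lambda,C$)
\begin{align}
v(s)\le |y_0|^4-\frac{\lambda}{\epsilon}\int_0^s v(r)\,dr+\frac{C}{\epsilon}\int_0^s\big(u(r)+1\big)\,dr\,. \notag
\end{align}
A standard Gr\"onwall-type comparison (equivalently, the variation-of-constants estimate, the kernel $\tfrac1\epsilon e^{-\lambda(s-r)/\epsilon}$ having integral $\le\tfrac1\lambda$) then removes the $\epsilon^{-1}$ and yields
\begin{align}
v(s)\le |y_0|^4+\frac{C}{\lambda}\Big(\sup_{r\le s}u(r)+1\Big),\qquad 0\le s\le T\,. \notag
\end{align}

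\textbf{Slow variable and closing the loop.} It\^o's formula for $|x_s|^4$, together with the linear growth of $f$, the boundedness of $\alpha_1$, and Young's inequality, gives
\begin{align}
u(s)\le |x_0|^4+C\int_0^s\big(u(r)+v(r)+1\big)\,dr\,. \notag
\end{align}
Substituting the bound for $v$ and bounding $u(r)$ by its running maximum $U(s):=\sup_{r\le s}u(r)$ turns this into
\begin{align}
U(s)\le |x_0|^4+C\big(|y_0|^4+1\big)+C\int_0^s U(r)\,dr\,, \notag
\end{align}
so Gr\"onwall's lemma gives $U(T)\le C(|x_0|^4+|y_0|^4+1)$, which is the first estimate of the lemma; feeding this back into the inequality for $v$ gives $\sup_{0\le s\le T}v(s)\le C(|x_0|^4+|y_0|^4+1)$, which is the second.

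\textbf{Main obstacle.} The one delicate point is the singular prefactor: in the $|y_s|^4$ estimate the cross term $\tfrac{C}{\epsilon}\bE[|y_r|^2(|x_r|^2+1)]$ is of the same order in $\epsilon$ as the dissipative term, so the argument relies on (\ref{mixing-1}) --- itself a consequence of Assumption~\ref{assumption-2}, whose factor-$3$ slack is precisely what the fourth-power computation consumes --- together with a Young split tuned so that a negative multiple of $|y_r|^4$ survives; the subsequent convolution/Gr\"onwall step then absorbs the $\epsilon^{-1}$. Everything else (the localization, the linear-growth and boundedness bounds for the slow equation, and the final Gr\"onwall) is routine, and the same computation will later be reused verbatim for the controlled dynamics (\ref{controlled-averaging-dynamics}) since the extra drift terms there are bounded uniformly in $\epsilon$.
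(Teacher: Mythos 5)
Your proof is correct and follows essentially the same route as the paper: apply It\^o's formula to $|x_s|^4$ and $|y_s|^4$, use the Lipschitz/linear-growth bounds and the dissipativity estimate (\ref{mixing-1}) (with Young's inequality) to arrive at the coupled differential inequalities $\frac{d}{ds}\bE|x_s|^4 \le C(\bE|x_s|^4+\bE|y_s|^4+1)$ and $\frac{d}{ds}\bE|y_s|^4 \le -\frac{\lambda}{\epsilon}\bE|y_s|^4 + \frac{C}{\epsilon}(\bE|x_s|^4+1)$, and then close with a Gr\"onwall-type argument that exploits the convolution kernel $\frac{1}{\epsilon}e^{-\lambda(s-r)/\epsilon}$ having $\epsilon$-independent $L^1$-norm. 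The only cosmetic difference is that the paper dispatches the final step by citing an argument ``similar to'' its Claim~\ref{claim-2}, whereas you spell out the variation-of-constants bound and the running-maximum Gr\"onwall step explicitly; your remark on the factor-$3$ slack in Assumption~\ref{assumption-2} being exactly consumed by the fourth-power It\^o terms is accurate.
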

\begin{proof}
  Applying Ito's formula to $|x_s|^4$ and taking expectation, we can obtain
  \begin{align}
    \frac{d\bE|x_s|^4}{ds} 
    = & 4\bE\Big(|x_s|^2\langle f(x_s, y_s), x_s\rangle\Big) + 2
    \bE\Big(|x_s|^2\|\alpha_1(x_s, y_s)\|^2\Big) + 4\bE\Big(|\alpha^T_1(x_s,
    y_s) x_s|^2\Big)\notag \\
    \le& 4\bE\Big(|x_s|^2\langle f(x_s, y_s), x_s\rangle\Big) + 6
    \bE\Big(|x_s|^2\|\alpha_1(x_s, y_s)\|^2\Big) \notag \,,
  \end{align}
  and similarly for $|y_s|^4$, 
  \begin{align}
    \frac{d\bE|y_s|^4}{ds} \le& \frac{4}{\epsilon} \bE\Big(|y_s|^2\langle g(x_s, y_s), y_s\rangle\Big) + 
    \frac{6}{\epsilon} \bE\Big(|y_s|^2\|\alpha_2(x_s, y_s)\|^2\Big). \notag 
  \end{align}
  By Assumption \ref{assumption-1}, $f$ is Lipschitz and $\alpha_1$ is bounded. 
  We also know from Remark~\ref{rmk-1} that $|f(x_s, y_s)| \le C(1 + |x_s| +
  |y_s|)$ and inequality (\ref{mixing-1}) holds.
  Together with Young's inequality, we obtain  
  \begin{align}
    \frac{d\bE|x_s|^4}{ds} \le& C \Big(\bE|x_s|^4 + \bE|y_s|^4 + 1\Big) \notag \\
    \frac{d\bE|y_s|^4}{ds} \le& -\frac{\lambda}{\epsilon} \bE|y_s|^4 +
    \frac{C}{\epsilon} \Big(\bE|x_s|^4 + 1\Big) \notag \,.
  \end{align}
  An argument similar to the one in Claim~\ref{claim-2} of Appendix~\ref{app-1} provides us with the desired estimates.
\qed
\end{proof}
\begin{remark}
  Reiterating the above argument, we can prove that the solutions of (\ref{auxiliary}) and (\ref{dynamics-averaged}) satisfy 
  \begin{align}
    \max_{0\le s \le T} \bE|\hat{x}_s|^4 \le  C \big(|x_0|^4 + |y_0|^4 + 1\big), \qquad 
    \max_{0\le s \le T} \bE|\hat{y}_s|^4 \le  C \big(|y_0|^4 + |x_0|^4 +
    1\big)\,,
  \end{align}
  and 
  \begin{align}
    \max_{0\le s \le T} \bE|\widetilde{x}_s|^4 \le  C \big(|x_0|^4 + 1\big)\,,
  \end{align}
  since $\widetilde{f}$ is Lipschitz as well (Remark~\ref{rmk-1}).
  \label{rmk-2}
\end{remark}

The above results entail estimates for the supremum of the solution $x_{s}$ of SDE
(\ref{averaging-dynamics}), as well as for the occupation time of $y_s$ on finite time intervals:  
\begin{lemma}
  Letting Assumptions \ref{assumption-1}--\ref{assumption-2} hold, there exists $C>0$,
  independent of $\epsilon$, $x_0$ and $y_0$, such that
  \begin{align}
\mathbf{E}\big(\sup_{0 \le s \le T} |x_s|^4\big) \le C\big(1 + |x_0|^4 + |y_0|^4\big) \notag\,.
  \end{align}
  Moreover, for all $\delta, R > 0$, it holds  
\begin{align}
  \mathbf{P} \Big(\int_0^T \big(1 - \chi_R(y_s)\big) ds \ge \delta \Big) \le
  \frac{C\big(1 + |x_0|^4 + |y_0|^4\big)}{\delta
  R^4}\,, \notag \\
  \mathbf{P} \Big(\int_0^T \big(1 - \chi_R(x_s, y_s)\big) ds \ge \delta \Big) \le
  \frac{C\big(1 + |x_0|^4 + |y_0|^4\big)}{\delta R^4}\,, \notag
\end{align}
where the characteristic functions are defined in the proof of Theorem~\ref{main_thm_repeat}.
\label{lemma-occupation-time}
\end{lemma}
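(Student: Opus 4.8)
The plan is to obtain the supremum estimate from Ito's formula combined with the Burkholder--Davis--Gundy (BDG) inequality and a localization argument, and then to deduce the two occupation-time bounds from a twofold application of Markov's inequality together with the fourth-moment estimates of Lemma~\ref{lemma-4th-stability}.

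For the first claim, I would introduce the stopping times $\tau_n = \inf\{s \ge 0 : |x_s| \ge n\}$ in order to justify the computations and then pass to the limit $n \to \infty$ by Fatou's lemma. Applying Ito's formula to $|x_s|^4$ exactly as in the proof of Lemma~\ref{lemma-4th-stability}, taking the supremum over $s \in [0, T \wedge \tau_n]$ and then expectations, the drift terms are handled with the linear growth bound $|f(x,y)| \le C(1+|x|+|y|)$ and the boundedness of $\alpha_1$ (Assumption~\ref{assumption-1} and Remark~\ref{rmk-1}), contributing a term of order $\int_0^T \mathbf{E}(1 + |x_r|^4 + |y_r|^4)\,dr$. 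For the stochastic integral $\int_0^s 4|x_r|^2\, x_r^T \alpha_1(x_r,y_r)\,dw_r^1$, BDG gives
\[
\mathbf{E}\sup_{0 \le s \le T \wedge \tau_n}\Big|\int_0^s 4|x_r|^2 x_r^T \alpha_1\,dw_r^1\Big|
\le C\,\mathbf{E}\Big(\int_0^{T\wedge\tau_n}|x_r|^6\,dr\Big)^{1/2}
\le C\,\mathbf{E}\Big(\sup_{0\le s\le T\wedge\tau_n}|x_s|^2 \int_0^{T\wedge\tau_n}|x_r|^4\,dr\Big)^{1/2},
\]
and Young's inequality then lets me absorb $\tfrac{1}{2}\mathbf{E}\sup_{0\le s\le T\wedge\tau_n}|x_s|^4$ into the left-hand side, leaving a remainder of order $\int_0^T \mathbf{E}|x_r|^4\,dr$. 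Bounding $\int_0^T \mathbf{E}(|x_r|^4 + |y_r|^4)\,dr \le CT(1 + |x_0|^4 + |y_0|^4)$ via Lemma~\ref{lemma-4th-stability} and letting $n \to \infty$ yields the supremum estimate.

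For the occupation-time bounds, observe that $1 - \chi_R(y_s)$ is the indicator of $\{|y_s| > R\}$. By Markov's inequality applied to the nonnegative random variable $\int_0^T (1-\chi_R(y_s))\,ds$, then Tonelli's theorem, and then Markov's inequality again applied to $|y_s|^4$,
\[
\mathbf{P}\Big(\int_0^T (1-\chi_R(y_s))\,ds \ge \delta\Big)
\le \frac{1}{\delta}\int_0^T \mathbf{P}(|y_s| > R)\,ds
\le \frac{1}{\delta R^4}\int_0^T \mathbf{E}|y_s|^4\,ds
\le \frac{CT}{\delta R^4}\max_{0\le s\le T}\mathbf{E}|y_s|^4,
\]
and Lemma~\ref{lemma-4th-stability} bounds the last factor by $C(1 + |x_0|^4 + |y_0|^4)$. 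For the joint statement one uses $1 - \chi_R(x_s,y_s) \le \big(1 - \chi_R(x_s)\big) + \big(1 - \chi_R(y_s)\big)$, so that the event $\{\int_0^T (1-\chi_R(x_s,y_s))\,ds \ge \delta\}$ is contained in the union of the events on which one of the two separate integrals is at least $\delta/2$; a union bound plus the same two-step Markov argument, now using both fourth-moment bounds of Lemma~\ref{lemma-4th-stability}, gives the claimed estimate.

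The main obstacle is the supremum bound: one must run the localization so that the finiteness of $\mathbf{E}\sup_{s\le T\wedge\tau_n}|x_s|^4$ is guaranteed before invoking Young's inequality to absorb it, and one has to verify that all constants stay independent of $\epsilon$. The latter holds because $\epsilon$ enters only the $y$-equation, which influences the estimate for $\sup|x_s|^4$ solely through the term $\mathbf{E}|y_r|^4$ that is already controlled uniformly in $\epsilon$ by Lemma~\ref{lemma-4th-stability}; the occupation-time estimates are then immediate.
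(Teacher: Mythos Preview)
Your proof is correct, and for the occupation-time bounds it is essentially the same argument as in the paper (Markov/Chebyshev on the time integral, combined with Lemma~\ref{lemma-4th-stability}).

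For the supremum bound, however, you take a somewhat different route. The paper does not apply Ito's formula to $|x_s|^4$; it works directly with the integral representation
\[
x_s = x_0 + \int_0^s f(x_r,y_r)\,dr + \int_0^s \alpha_1(x_r,y_r)\,dw_r^1,
\]
raises this to the fourth power, and then takes the supremum and expectation. The drift integral is handled by H\"older and Lemma~\ref{lemma-4th-stability}, and the stochastic integral is bounded via the maximal martingale inequality by $C\big(\mathbf{E}\int_0^T \|\alpha_1\|^2\,dr\big)^2$, which is immediately finite because $\alpha_1$ is bounded (Assumption~\ref{assumption-1}). No localization and no absorption of $\tfrac12\mathbf{E}\sup|x_s|^4$ are needed. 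Your approach through Ito's formula also works, but it forces the martingale term to carry the factor $|x_r|^2 x_r^T$, so BDG produces $|x_r|^6$ and you must use the $\sup|x_s|^2$-extraction and Young's inequality; this is precisely where your localization is needed to justify the absorption. The paper's route is shorter here because it exploits the boundedness of $\alpha_1$ at the level of the SDE itself rather than after applying Ito's formula.
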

\begin{proof}
  The proof is standard. Since $f$ is Lipschitz, H\"older's inequality entails
  \begin{align}
    |x_s|^4 \le & C \Big(|x_0|^4 + \Big|\int_0^s f(x_r,y_r) dr\Big|^4 + \Big|\int_0^s
    \alpha_1(x_r,y_r) dw_r^1\Big|^4\Big)\notag \\
    \le &  C \Big(|x_0|^4 + s^3 \int_0^s |f(x_r,y_r)|^4 dr + \Big|\int_0^s
    \alpha_1(x_r,y_r) dw_r^1\Big|^4\Big) \notag\\
    \le &  C \Big(|x_0|^4 + T^3 \int_0^T \big(|x_r|^4 + |y_r|^4 + 1\big) dr + \Big|\int_0^s
    \alpha_1(x_r,y_r) dw_r^1\Big|^4\Big)\,. \notag
  \end{align}
  Taking first the supremum and then the expected value on both sides, we find 
  \begin{align}
    \mathbf{E}\big(\sup_{0 \le s \le T} |x_s|^4\big) \le & C\Big[|x_0|^4 +
    T^3\mathbf{E}\int_0^T
    \big(|x_r|^4 + |y_r|^4 + 1\big) dr + \mathbf{E}\Big(\sup_{0 \le s \le T}
  \Big(\int_0^s \alpha_1(x_r,y_r) dw_r^1\Big)^4\Big)\Big]\,.  \notag
  \end{align}
  The first integral in the last equation can be bounded using Lemma~\ref{lemma-4th-stability}, whereas the second one is bounded by the maximal
  martingale inequality \cite{Karatzas1991}. Hence 
  \begin{align}
\mathbf{E}\big(\sup_{0 \le s \le T} |x_s|^4\big) \le & C\big(|x_0|^4 + |y_0|^4
    + 1\big) + C\Big(
    \mathbf{E}\int_0^T |\alpha_1(x_r,y_r)|^2 dr\Big)^{2} \notag
  \end{align}
   and the boundedness of $\alpha_1$ entails 
  \begin{align}
\mathbf{E}(\sup_{0 \le s \le T} |x_s|^4) \le C\big(1 + |x_0|^4 +
|y_0|^4\big) \notag \,.
  \end{align}
  
  As for the second part of the assertion, notice that for all $\delta>0$ and $R > 0$ it holds:   
  \begin{align*}
R^4 \mathbf{E} \Big[\int_0^T \big(1 - \chi_R(y_s)\big) ds\Big] & \le 
\mathbf{E} \Big[\int_0^T |y_s|^4 \big(1- \chi_R(y_s)\big) ds\Big]\\
&  \le 
  \mathbf{E} \Big(\int_0^T |y_s|^4 ds\Big) \le C\big(1 + |x_0|^4 +
  |y_0|^4\big)\,.
\end{align*}
Thus, by Chebyshev's inequality,  
\begin{align}
  \mathbf{P} \Big(\int_0^T \big(1 - \chi_R(y_s)\big) ds \ge \delta \Big) \le
  \frac{C\big(1 + |x_0|^4 + |y_0|^4\big)}{\delta
  R^4}\,.
  \notag
\end{align}
The second inequality follows in the same fashion. 
\qed
\end{proof}
\begin{remark}
  Based on the result of Theorem~\ref{main-result-4}, we can prove that the
  same conclusions of Lemma~\ref{lemma-4th-stability} and
  Lemma~\ref{lemma-occupation-time} hold for
  processes~(\ref{controlled-averaging-dynamics}) as well. See the discussions in the
  proof of Theorem~\ref{main_thm_repeat}.
  \label{rmk-occupation-time}
\end{remark}

We proceed our analysis by inspecting (\ref{1st-variation-x}) for the
processes $x_{s,x_i}, y_{s, x_i}$, for which we seek the analogue of  
the inequality (\ref{y-derivative-inq}). In this case the initial values satisfy $\bE|x_{0,x_i}|^2 =
1$, $\bE|y_{0,x_i}|^2 = 0$ and by
similar argument as in the proof of Lemma~\ref{y-derivative-bound}, we find:
\begin{lemma}
  Under Assumptions \ref{assumption-1}--\ref{assumption-2}, there exists $C>0$,
  independent of $\epsilon$, $x_0$ and $y_0$, such that 
  \begin{align}
    \max_{0\le s \le T} \bE|x_{s,x_i}|^2 \le C, \quad  \max_{0\le s \le T} \bE|y_{s,x_i}|^2 \le C, \quad \quad 1 \le i \le k.
  \end{align}
  \label{x-derivative-bound}
\end{lemma}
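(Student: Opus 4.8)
The plan is to follow the proof of Lemma~\ref{y-derivative-bound} essentially verbatim; the only difference is the initial data, which here is $\bE|x_{0,x_i}|^2 = 1$ and $\bE|y_{0,x_i}|^2 = 0$ instead of the reverse. First I would apply It\^o's formula to $|x_{s,x_i}|^2$ and $|y_{s,x_i}|^2$ using the linear system (\ref{1st-variation-x}) and take expectations, so that the martingale terms drop out; this produces evolution equations for $\bE|x_{s,x_i}|^2$ and $\bE|y_{s,x_i}|^2$ containing the drift terms $2\bE\langle\nabla_xf\,x_{s,x_i},x_{s,x_i}\rangle$, $2\bE\langle\nabla_yf\,y_{s,x_i},x_{s,x_i}\rangle$ (and their $\epsilon^{-1}$-scaled counterparts with $g$ in the $y$-equation), together with the quadratic-variation terms $\bE\|\nabla_x\alpha_1\,x_{s,x_i}+\nabla_y\alpha_1\,y_{s,x_i}\|^2$ and $\epsilon^{-1}\bE\|\nabla_x\alpha_2\,x_{s,x_i}+\nabla_y\alpha_2\,y_{s,x_i}\|^2$, exactly parallel to (\ref{square-exp-y}).

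Next I would bound the cross terms by Cauchy--Schwarz and Young's inequality, use the uniform Lipschitz bounds of Assumption~\ref{assumption-1} on $\nabla f,\nabla g,\nabla\alpha_1,\nabla\alpha_2$, and, crucially, invoke the dissipativity inequality (\ref{mixing}) of Remark~\ref{rmk-1} to absorb the $y_{s,x_i}$-dependent contributions of the $y$-equation into a term $-\tfrac{\lambda}{\epsilon}\bE|y_{s,x_i}|^2$ (the factor $3$ in (\ref{mixing}) leaves exactly the slack needed to swallow the mixed Frobenius term and the cross term $\tfrac{2}{\epsilon}\bE\langle\nabla_xg\,x_{s,x_i},y_{s,x_i}\rangle$, at the price of an extra $\tfrac{C}{\epsilon}\bE|x_{s,x_i}|^2$). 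The outcome is the coupled differential inequality
\begin{align*}
  \frac{d\bE|x_{s,x_i}|^2}{ds} &\le C\big(\bE|x_{s,x_i}|^2 + \bE|y_{s,x_i}|^2\big), \\
  \frac{d\bE|y_{s,x_i}|^2}{ds} &\le -\frac{\lambda}{\epsilon}\bE|y_{s,x_i}|^2 + \frac{C}{\epsilon}\bE|x_{s,x_i}|^2,
\end{align*}
subject to $\bE|x_{0,x_i}|^2=1$, $\bE|y_{0,x_i}|^2=0$, which is precisely the fast--slow linear system treated in Claim~\ref{claim-2} of Appendix~\ref{app-1}. Applying that comparison claim finishes the proof: it yields $\max_{0\le s\le T}\bE|x_{s,x_i}|^2\le C$ and $\max_{0\le s\le T}\bE|y_{s,x_i}|^2\le C$ with $C$ independent of $\epsilon$. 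Heuristically, $y_{s,x_i}$ relaxes on the fast $\epsilon$-scale to a quasi-stationary value of order $C/\lambda$ forced by the $O(1)$ slow derivative $x_{s,x_i}$, and $x_{s,x_i}$ in turn stays $O(1)$ because it is driven only by bounded quantities; unlike in Lemma~\ref{y-derivative-bound} one does \emph{not} gain a power of $\epsilon$, since here the slow derivative is already of order one at time zero.

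I do not expect any serious obstacle. The one step worth checking carefully is the bookkeeping in the $y$-equation: one must split $\tfrac{2}{\epsilon}\bE\langle\nabla_xg\,x_{s,x_i},y_{s,x_i}\rangle$ and the $y$-part of $\tfrac{1}{\epsilon}\bE\|\nabla_x\alpha_2\,x_{s,x_i}+\nabla_y\alpha_2\,y_{s,x_i}\|^2$ by Young's inequality with weights chosen so that, after invoking (\ref{mixing}), a strictly negative multiple $-c\lambda/\epsilon$ of $\bE|y_{s,x_i}|^2$ survives while only $O(1/\epsilon)$ multiples of $\bE|x_{s,x_i}|^2$ are produced. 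This is exactly the computation already carried out for Lemma~\ref{y-derivative-bound}, so it transfers without change, and the statement then also supplies the first two bounds in Theorem~\ref{main-result-1}.
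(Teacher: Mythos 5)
Your proposal is correct and reproduces exactly the argument the paper invokes: apply It\^o's formula to $|x_{s,x_i}|^2$ and $|y_{s,x_i}|^2$, use Lipschitz bounds and the dissipativity inequality (\ref{mixing}) to arrive at the same coupled differential inequality as (\ref{y-derivative-inq}) but with initial data $\bE|x_{0,x_i}|^2=1$, $\bE|y_{0,x_i}|^2=0$, and then run the Gronwall argument. The paper gives no explicit proof, stating only ``by similar argument as in the proof of Lemma~\ref{y-derivative-bound},'' and your write-up is that argument spelled out. One small caveat worth flagging in a clean write-up: Claim~\ref{claim-2} as stated assumes $x_1(0)=0$, $x_2(0)=1$, so you cannot cite its conclusion verbatim for the swapped initial data; you must rerun the Gronwall computation in the \emph{proof} of Claim~\ref{claim-2}, which with $x_1(0)=1$, $x_2(0)=0$ yields only $\max x_1\le C$, $\max x_2\le C$ without any $\epsilon$-gain. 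You clearly understand this (your closing remark about not gaining a power of $\epsilon$ says exactly why), but the phrase ``applying that comparison claim'' should be replaced by ``applying the argument in the proof of that claim.''
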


Upper bounds on $4$th moments can be obtained in the same manner: 
\begin{lemma}
  Under Assumptions \ref{assumption-1}--\ref{assumption-2}, there exists $C>0$,
  independent of $\epsilon$, $x_0$ and $y_0$, such that 
  \begin{align}
    \max_{0\le s \le T} \bE|x_{s,x_i}|^4 \le C, \quad  \max_{0\le s \le T} \bE|y_{s,x_i}|^4 \le C, \quad 1 \le i \le k.
  \end{align}
  \label{d-xi-4}
\end{lemma}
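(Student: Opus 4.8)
The plan is to copy, almost verbatim, the energy-estimate scheme already used for Lemma~\ref{lemma-4th-stability} and Lemma~\ref{x-derivative-bound}, now applied to the fourth powers $|x_{s,x_i}|^4$ and $|y_{s,x_i}|^4$ of the first-variation processes (\ref{1st-variation-x}). First I would apply It\^o's formula to these quantities and take expectations. Writing the $y$-equation schematically as $dy_{s,x_i}=\epsilon^{-1}A^y_s\,ds+\epsilon^{-1/2}B^y_s\,dw^2_s$ with $A^y_s=\nabla_x g\,x_{s,x_i}+\nabla_y g\,y_{s,x_i}$ and $B^y_s=\nabla_x\alpha_2\,x_{s,x_i}+\nabla_y\alpha_2\,y_{s,x_i}$ (and analogously $A^x_s,B^x_s$ for the $x$-equation, now keeping $\nabla_y\alpha_1$ since Assumption~\ref{assumption-3} is not used here), It\^o's formula gives
\[
\frac{d\bE|y_{s,x_i}|^4}{ds}=\bE\Big(\tfrac{4}{\epsilon}|y_{s,x_i}|^2\langle A^y_s,y_{s,x_i}\rangle+\tfrac{2}{\epsilon}|y_{s,x_i}|^2\|B^y_s\|^2+\tfrac{4}{\epsilon}|(B^y_s)^Ty_{s,x_i}|^2\Big)\le \bE\Big(\tfrac{4}{\epsilon}|y_{s,x_i}|^2\langle A^y_s,y_{s,x_i}\rangle+\tfrac{6}{\epsilon}|y_{s,x_i}|^2\|B^y_s\|^2\Big),
\]
and similarly for $\bE|x_{s,x_i}|^4$ without the $\epsilon^{-1}$ factors; the stochastic integrals vanish in expectation thanks to the second-moment bounds of Lemma~\ref{x-derivative-bound} together with the boundedness of the coefficient derivatives in Assumption~\ref{assumption-1}.

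Next I would estimate the drift terms. By Assumption~\ref{assumption-1} the gradient matrices $\nabla_x f,\nabla_y f,\nabla_x\alpha_1,\nabla_y\alpha_1$ are bounded, so $|A^x_s|+\|B^x_s\|\le C(|x_{s,x_i}|+|y_{s,x_i}|)$ and Young's inequality yields $\tfrac{d}{ds}\bE|x_{s,x_i}|^4\le C(\bE|x_{s,x_i}|^4+\bE|y_{s,x_i}|^4)$. For the $y$-equation I would isolate the part of $\tfrac{4}{\epsilon}|y_{s,x_i}|^2\langle A^y_s,y_{s,x_i}\rangle+\tfrac{6}{\epsilon}|y_{s,x_i}|^2\|B^y_s\|^2$ that is quartic in $y_{s,x_i}$, namely $\tfrac{|y_{s,x_i}|^2}{\epsilon}\big(4\langle\nabla_y g\,y_{s,x_i},y_{s,x_i}\rangle+6\|\nabla_y\alpha_2\,y_{s,x_i}\|^2\big)$, and bound it using the mixing inequality (\ref{mixing}) (valid here since $\beta=1$): since $\langle\nabla_y g\,\xi,\xi\rangle+3\|\nabla_y\alpha_2\,\xi\|^2\le-\lambda|\xi|^2$, one has $4\langle\nabla_y g\,\xi,\xi\rangle+6\|\nabla_y\alpha_2\,\xi\|^2\le-4\lambda|\xi|^2$, so this contribution is $\le-\tfrac{4\lambda}{\epsilon}\bE|y_{s,x_i}|^4$. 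All remaining terms are at most quadratic in $y_{s,x_i}$ and at most quadratic in $x_{s,x_i}$; splitting them with Young's inequality $|y_{s,x_i}|^a|x_{s,x_i}|^{4-a}\le\theta|y_{s,x_i}|^4+C_\theta|x_{s,x_i}|^4$ for small $\theta$, their $\epsilon^{-1}|y_{s,x_i}|^4$ part is absorbed into the dissipative term, leaving
\[
\frac{d\bE|x_{s,x_i}|^4}{ds}\le C\big(\bE|x_{s,x_i}|^4+\bE|y_{s,x_i}|^4\big),\qquad \frac{d\bE|y_{s,x_i}|^4}{ds}\le-\frac{\lambda}{\epsilon}\bE|y_{s,x_i}|^4+\frac{C}{\epsilon}\bE|x_{s,x_i}|^4,
\]
with $\bE|x_{0,x_i}|^4=1$ and $\bE|y_{0,x_i}|^4=0$.

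Finally I would invoke Claim~\ref{claim-2} of Appendix~\ref{app-1} — the same comparison principle used in the proofs of Lemma~\ref{lemma-4th-stability} and Lemma~\ref{y-derivative-bound} — to conclude from this coupled differential inequality that $\max_{0\le s\le T}\bE|x_{s,x_i}|^4$ and $\max_{0\le s\le T}\bE|y_{s,x_i}|^4$ are bounded by a constant that depends only on $T$, $\lambda$ and the bounds in Assumption~\ref{assumption-1}, hence independent of $\epsilon$, $x_0$, $y_0$. I do not expect a genuine obstacle: the statement is a routine upgrade of Lemma~\ref{x-derivative-bound} from second to fourth moments. The only point requiring care is the bookkeeping in the $y$-equation, i.e.\ checking that after the Young-inequality splitting the coefficient of $\bE|y_{s,x_i}|^4$ remains negative and of order $\epsilon^{-1}$ — which is precisely what the (non-optimal) constant $3$ in (\ref{mixing}) was chosen to guarantee, see Remark~\ref{rmk-1}.
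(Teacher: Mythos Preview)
Your proposal is correct and follows essentially the same route as the paper: apply It\^o's formula to $|x_{s,x_i}|^4$ and $|y_{s,x_i}|^4$, use the Lipschitz bounds from Assumption~\ref{assumption-1} together with the mixing inequality (\ref{mixing}) and Young's inequality to arrive at the coupled differential inequalities, and conclude via the Gronwall-type argument of Claim~\ref{claim-2}. The paper obtains the slightly sharper coefficient $-2\lambda/\epsilon$ in the $y$-inequality and phrases the last step as ``the same argument as in the proof of Claim~\ref{claim-2}'' (since the initial conditions $\bE|x_{0,x_i}|^4=1$, $\bE|y_{0,x_i}|^4=0$ are swapped relative to that claim), but these are cosmetic differences.
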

\begin{proof}
  The proof is similar to Lemma~\ref{lemma-4th-stability}. Using Ito's formula, we obtain
\begin{align}
  \begin{split}
    d \bE|x_{s,x_i}|^4 =& 4\bE\Big(|x_{s,x_i}|^2\langle \nabla_{x} f\,x_{s,x_i} + \nabla_{y} f\,y_{s,x_i},
    x_{s,x_i}\rangle\Big) ds + 2 \bE\Big(|x_{s,x_i}|^2\|\nabla_{x} \alpha_1\,
    x_{s,x_i} + \nabla_{y} \alpha_1 \,y_{s,x_i}\|^2\Big)
  ds \\
  &+ 4 \bE\Big(|(\nabla_{x} \alpha_1\, x_{s,x_i} + \nabla_{y} \alpha_1
  \,y_{s,x_i})^Tx_{s,x_i}|^2\Big) ds \\
  \le & 4\bE\Big(|x_{s,x_i}|^2\langle \nabla_{x} f\,x_{s,x_i} + \nabla_{y} f\,y_{s,x_i},
    x_{s,x_i}\rangle\Big) ds + 6 \bE\Big(|x_{s,x_i}|^2\|\nabla_{x} \alpha_1\,
    x_{s,x_i} + \nabla_{y} \alpha_1 \,y_{s,x_i}\|^2\Big)
  ds \\
  d \bE|y_{s,x_i}|^4 =& \frac{4}{\epsilon} \bE\Big(|y_{s,x_i}|^2\langle \nabla_{x} g\,x_{s,x_i} + \nabla_{y} g\,y_{s,x_i},
  y_{s,x_i}\rangle\Big) ds + \frac{2}{\epsilon} \bE\Big(|y_{s,x_i}|^2\|\nabla_{x}
  \alpha_2\, x_{s,x_i} + \nabla_{y} \alpha_2 \,y_{s,x_i}\|^2\Big)
  ds \\
  &+ \frac{4}{\epsilon} \bE\Big(|(\nabla_{x} \alpha_2\, x_{s,x_i} + \nabla_{y} \alpha_2
  \,y_{s,x_i})^Ty_{s,x_i}|^2\Big) ds\, \\
  \le & \frac{4}{\epsilon} \bE\Big(|y_{s,x_i}|^2\langle \nabla_{x} g\,x_{s,x_i} + \nabla_{y} g\,y_{s,x_i},
  y_{s,x_i}\rangle\Big) ds + \frac{6}{\epsilon} \bE\Big(|y_{s,x_i}|^2\|\nabla_{x}
  \alpha_2\, x_{s,x_i} + \nabla_{y} \alpha_2 \,y_{s,x_i}\|^2\Big) ds \,.
\end{split}
\end{align}
Lipschitz conditions on the coefficients in Assumption \ref{assumption-1},
Assumption~\ref{assumption-2}, especially inequality (\ref{mixing}) in
Remark~\ref{rmk-1} as well as Young's inequality now readily imply that 
\begin{align*}
  \frac{d\bE|x_{s,x_i}|^4}{ds} &\le C\big(\bE|x_{s,x_i}|^4 +
  \bE|y_{s,x_i}|^4\big) \\
	 \frac{d\bE|y_{s,x_i}|^4}{ds} &\le -\frac{2\lambda}{\epsilon} \bE|y_{s,x_i}|^4 +
  \frac{C}{\epsilon} \bE|x_{s,x_i}|^4\,,
\end{align*}
with $\bE|y_{0,x_i}|^4 = 0$, $\bE|x_{0,x_i}|^4 = 1$. The assertion then follows by the same argument as in the proof of Claim~\ref{claim-2} in Appendix~\ref{app-1}.
\qed
\end{proof}

We also have the following simple bounds for processes $x_s$ and $x_{s,x_i}$. 
\begin{lemma}
  Let $\Delta \le 1$, $s \in [j\Delta, (j + 1) \Delta), \; 0 \le j \le M - 1$.
 Further let Assumptions~\ref{assumption-1}--\ref{assumption-2} hold. 
 \begin{enumerate} 
   \item
For the process $x_s$ satisfying (\ref{averaging-dynamics}), it holds 
\begin{align}
\bE|x_s - x_{j\Delta}|^4 \le C(s - j\Delta)^2, 
\end{align}
where the constant $C>0$ is independent of $\epsilon, \Delta$ and can be chosen
uniformly for $x_0$ and $y_0$ which are contained in some bounded domain
of $\mathbb{R}^k \times \mathbb{R}^l$.
The same bound is satisfied by processes $\widetilde{x}_s, \hat{x}_s$.
   \item
     For process $x_{s, x_i}$ in (\ref{1st-variation-x}),  we have
\begin{align}
  \bE|x_{s,x_i}-x_{j\Delta, x_i}|^{4} \le C (s - j\Delta)^2 \le C\Delta^2,
\end{align}
with a constant $C>0$ that is independent of $\epsilon, x_0, y_0$. The same
inequality holds if $x_{s,x_{i}}$ is replaced by the processes $\hat{x}_{s,
x_i}$ and $\widetilde{x}_{s, x_i}$.  
\end{enumerate}
\label{x-diff-s-k}
\end{lemma}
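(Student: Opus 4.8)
The plan is to handle both parts by the same device: split the increment of the process over the short window $[j\Delta,s]$ into a drift (Lebesgue) integral plus a stochastic (It\^o) integral, bound the drift part with H\"older's inequality and the stochastic part with the Burkholder--Davis--Gundy inequality \cite{Karatzas1991}, and feed in the fourth-moment bounds already in hand. The structural reason the constants come out independent of $\epsilon$ is that neither the slow equation (\ref{averaging-dynamics}) nor the $x$-component of the first-variation system (\ref{1st-variation-x}) carries a factor $\epsilon^{-1}$ or $\epsilon^{-1/2}$; those singular terms sit only in the fast equations, whose relevant moments are already controlled uniformly in $\epsilon$ by Lemma~\ref{lemma-4th-stability}, Remark~\ref{rmk-2} and Lemma~\ref{d-xi-4}.

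For part (1) I would write $x_s - x_{j\Delta} = \int_{j\Delta}^s f(x_r,y_r)\,dr + \int_{j\Delta}^s \alpha_1(x_r,y_r)\,dw_r^1$. H\"older's inequality bounds $\bE\big|\int_{j\Delta}^s f(x_r,y_r)\,dr\big|^4$ by $(s-j\Delta)^3\int_{j\Delta}^s\bE|f(x_r,y_r)|^4\,dr$; since $|f(x,y)|\le C(1+|x|+|y|)$ (Remark~\ref{rmk-1}), the integrand is $\le C(1+\bE|x_r|^4+\bE|y_r|^4)$, which is $\le C$ for $x_0,y_0$ in a fixed bounded set by Lemma~\ref{lemma-4th-stability}, so this term is $O((s-j\Delta)^4)$. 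For the stochastic integral, Burkholder--Davis--Gundy gives $\bE\big|\int_{j\Delta}^s\alpha_1(x_r,y_r)\,dw_r^1\big|^4\le C\,\bE\big(\int_{j\Delta}^s\|\alpha_1(x_r,y_r)\|^2\,dr\big)^2\le C(s-j\Delta)^2$, since $\alpha_1$ is bounded (Assumption~\ref{assumption-1}). Adding the two and using $s-j\Delta\le\Delta\le 1$ yields $\bE|x_s-x_{j\Delta}|^4\le C(s-j\Delta)^2$. For $\widetilde{x}_s$ and $\hat{x}_s$ I expect the identical computation to go through: $\widetilde{f}$ has linear growth and $\widetilde{\alpha}$ is bounded (Remark~\ref{rmk-1}), the auxiliary drift $f(x_{j\Delta},\hat{y}_s)$ again obeys the linear-growth bound, the diffusion coefficients are bounded, and the fourth moments of $\widetilde{x}_s,\hat{x}_s,\hat{y}_s$ (and of $x_{j\Delta}$) are furnished by Remark~\ref{rmk-2}.

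For part (2) I would write $x_{s,x_i}-x_{j\Delta,x_i}=\int_{j\Delta}^s(\nabla_x f\,x_{r,x_i}+\nabla_y f\,y_{r,x_i})\,dr+\int_{j\Delta}^s(\nabla_x\alpha_1\,x_{r,x_i}+\nabla_y\alpha_1\,y_{r,x_i})\,dw_r^1$. Here all of $\nabla_x f,\nabla_y f,\nabla_x\alpha_1,\nabla_y\alpha_1$ are bounded by one constant (Assumption~\ref{assumption-1}), so H\"older on the drift integral produces $(s-j\Delta)^3\int_{j\Delta}^s(\bE|x_{r,x_i}|^4+\bE|y_{r,x_i}|^4)\,dr$ and Burkholder--Davis--Gundy on the martingale integral produces $C(s-j\Delta)\int_{j\Delta}^s(\bE|x_{r,x_i}|^4+\bE|y_{r,x_i}|^4)\,dr$; both integrands are $\le C$ by Lemma~\ref{d-xi-4}, with a constant independent of $\epsilon,x_0,y_0$, so the sum is $\le C(s-j\Delta)^2\le C\Delta^2$. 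The variants $\hat{x}_{s,x_i}$ and $\widetilde{x}_{s,x_i}$ solve linear SDEs of the same shape with coefficients bounded by the same constant, and the matching fourth-moment bounds follow exactly as in Lemma~\ref{d-xi-4} (whose proof uses only boundedness of the derivative coefficients and inequality (\ref{mixing})), so the same estimate applies.

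There is no genuinely hard step here; the only thing to watch is the bookkeeping of constants needed to certify the stated uniformities --- in part (1), uniformity in $\epsilon$ and local uniformity in $(x_0,y_0)$ come from Lemma~\ref{lemma-4th-stability} together with Remark~\ref{rmk-2}, whereas in part (2) the unconditional uniformity in $(\epsilon,x_0,y_0)$ comes from Lemma~\ref{d-xi-4} carrying no such dependence together with the global bound on the derivative coefficients.
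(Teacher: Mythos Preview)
Your proposal is correct and matches the paper's own argument essentially line for line: the same drift/martingale decomposition, the same use of linear growth of $f$ with Lemma~\ref{lemma-4th-stability} (and Remark~\ref{rmk-2}) for the Lebesgue integral, boundedness of $\alpha_1$ plus a martingale moment inequality for the stochastic integral, and then Lemma~\ref{d-xi-4} with bounded derivative coefficients for part~(2). The paper is terser about part~(2), simply invoking ``Lipschitz continuity of the coefficients together with Lemma~\ref{d-xi-4}'', but the unpacking you give is exactly what is intended.
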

\begin{proof}
   For the first part of the conclusion, using that $f$ is
   Lipschitz and therefore $|f(x_r, y_r)|\le C(1 + |x_r| + |y_r|)$ (Remark~\ref{rmk-1}),
   $\alpha_1$ is bounded (Assumption~\ref{assumption-1}), as well as Lemma~\ref{lemma-4th-stability}, we can conclude that 
  \begin{align}
    \bE|x_s - x_{j\Delta}|^4 =& \bE\Big[\int_{j\Delta}^s f(x_r,y_r) dr + \int_{j\Delta}^s
    \alpha_1(x_r,y_r) dw^1_r\Big]^4 \notag \\
    \le & C \bE\Big[\int_{j\Delta}^s \big(1 + |x_r| + |y_r|\big) dr\Big]^4 +
    C\bE\Big[\int_{j\Delta}^s \alpha_1(x_r,y_r) dw^1_r\Big]^4 \notag \\
    \le & C\big(|x_0|^4 + |y_0|^4 + 1\big) (s - j\Delta)^4 + C(s - j\Delta)^2 \notag \\
    \le & C(s - j\Delta)^2\,, \notag 
  \end{align}
  where, in the last inequality, we have used the fact that $\Delta \le 1$. It
  is clear that a common constant $C$ can be chosen for $x_0$, $y_0$ which are
  contained in some bounded domain.

  The second part of the conclusion can be obtained in a similar way by using
  the Lipschitz continuity of the coefficients together with Lemma~\ref{d-xi-4}.
\qed
\end{proof}

\subsection{Approximation by the auxiliary process}

\noindent
In this subsection, we study the approximations of   
the original dynamics (\ref{averaging-dynamics}) by the auxiliary discrete process
(\ref{auxiliary}) and the averaged dynamics (\ref{dynamics-averaged}).
First of all, we have 
\begin{lemma}
  Suppose that Assumptions \ref{assumption-1}--\ref{assumption-3} are met. 
For processes $x_s, y_s$ satisfying (\ref{averaging-dynamics}) and the auxiliary processes $\hat{x}_s$, $\hat{y}_s$ defined in (\ref{auxiliary}),
  we have 
\begin{align}
  \max_{0 \le s \le T} \bE|y_s - \hat{y}_s|^4 \le C \Delta^2\,, \qquad \max_{0
  \le s \le T} \bE|x_s - \hat{x}_s|^4 \le C
  \Delta^2\,,
\end{align}
  where the constant $C>0$ is independent of $\epsilon, \Delta$ and can be chosen
  uniformly for $x_0$, $y_0$ which are contained on some bounded domain
of $\mathbb{R}^k \times \mathbb{R}^l$.
\label{auxiliary-error}
\end{lemma}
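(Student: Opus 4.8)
The plan is to establish the two bounds in turn, handling the fast difference $y_s-\hat y_s$ first, since that is where the dissipative structure of Assumption~\ref{assumption-2} enters, and then reading off the slow estimate for $x_s-\hat x_s$ almost for free. Throughout I set $j=j(s)=\lfloor s/\Delta\rfloor$ so that $s\in[j\Delta,(j+1)\Delta)$.

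First I would set $\eta_s=y_s-\hat y_s$ and note that, since $y_s$ and $\hat y_s$ are driven by the same Wiener process $w_s^2$, the process $\eta_s$ is continuous on $[0,T]$ (by the continuity condition imposed on $\hat y$ at the grid points) and solves an SDE with drift $\epsilon^{-1}\bigl(g(x_s,y_s)-g(x_{j\Delta},\hat y_s)\bigr)$ and diffusion $\epsilon^{-1/2}\bigl(\alpha_2(x_s,y_s)-\alpha_2(x_{j\Delta},\hat y_s)\bigr)$. Applying It\^o's formula to $|\eta_s|^4$, taking expectations, and bounding the two quadratic-variation terms by $6|\eta_s|^2\epsilon^{-1}\|\alpha_2(x_s,y_s)-\alpha_2(x_{j\Delta},\hat y_s)\|^2$ (using $|A^T\eta|^2\le\|A\|^2|\eta|^2$) reduces matters to controlling
\begin{align}
\frac{d}{ds}\bE|\eta_s|^4\le\frac{4}{\epsilon}\,\bE\Bigl[|\eta_s|^2\Bigl(\langle\eta_s,g(x_s,y_s)-g(x_{j\Delta},\hat y_s)\rangle+\tfrac32\|\alpha_2(x_s,y_s)-\alpha_2(x_{j\Delta},\hat y_s)\|^2\Bigr)\Bigr]. \notag
\end{align}

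The key step — and the one I expect to demand the most care with constants — is to split each of the $g$- and $\alpha_2$-differences into a part in which only $y$ varies, namely $g(x_s,y_s)-g(x_s,\hat y_s)$ and $\alpha_2(x_s,y_s)-\alpha_2(x_s,\hat y_s)$, plus a remainder that the Lipschitz bound of Assumption~\ref{assumption-1} controls by $C|x_s-x_{j\Delta}|$. The $y$-varying part is handled by Assumption~\ref{assumption-2} (in the $\beta=1$ form), which gives $\langle\eta_s,g(x_s,y_s)-g(x_s,\hat y_s)\rangle\le-\lambda|\eta_s|^2-3\|\alpha_2(x_s,y_s)-\alpha_2(x_s,\hat y_s)\|^2$; the constant $3$ is precisely what absorbs, after expanding $\|\alpha_2(x_s,y_s)-\alpha_2(x_{j\Delta},\hat y_s)\|^2$ into the $y$-part plus an $O(|x_s-x_{j\Delta}|)$ correction and applying Young's inequality, both the $\tfrac32$-weight and the ensuing cross terms, leaving the bracket bounded by $-\tfrac\lambda2|\eta_s|^2+C|x_s-x_{j\Delta}|^2$. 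Multiplying by $|\eta_s|^2$ and using Young's inequality once more yields $\frac{d}{ds}\bE|\eta_s|^4\le-\frac{\lambda}{\epsilon}\bE|\eta_s|^4+\frac{C}{\epsilon}\bE|x_s-x_{j\Delta}|^4$. By Lemma~\ref{x-diff-s-k}(1), $\bE|x_s-x_{j\Delta}|^4\le C(s-j\Delta)^2\le C\Delta^2$, uniformly in $\epsilon$ and in $x_0,y_0$ from a bounded set, so Gr\"onwall's inequality with $\eta_0=0$ gives $\bE|\eta_s|^4\le\frac{C\Delta^2}{\epsilon}\int_0^s e^{-\lambda(s-r)/\epsilon}\,dr\le C\Delta^2$; the cancellation of the $1/\epsilon$ prefactor against the integral of the exponential kernel is exactly what makes this bound $\epsilon$-uniform.

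Finally, for $x_s-\hat x_s$ I would exploit that the two equations share the same diffusion coefficient $\alpha_1(x_s)$ (here Assumption~\ref{assumption-3} is used so that $\alpha_1$ is a function of $x$ alone), hence $x_s-\hat x_s=\int_0^s\bigl(f(x_r,y_r)-f(x_{j(r)\Delta},\hat y_r)\bigr)dr$ has no martingale part. Estimating $|f(x_r,y_r)-f(x_{j\Delta},\hat y_r)|\le C\bigl(|y_r-\hat y_r|+|x_r-x_{j\Delta}|\bigr)$ by Lipschitz continuity and applying H\"older's inequality in time gives $\bE|x_s-\hat x_s|^4\le T^3\int_0^T\bE|f(x_r,y_r)-f(x_{j(r)\Delta},\hat y_r)|^4\,dr\le C\int_0^T\bigl(\bE|y_r-\hat y_r|^4+\bE|x_r-x_{j(r)\Delta}|^4\bigr)dr\le C\Delta^2$, using the bound just established together with Lemma~\ref{x-diff-s-k}(1) once more. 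The uniformity of $C$ over bounded sets of $(x_0,y_0)$ is inherited from the two lemmas invoked throughout.
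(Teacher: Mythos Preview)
Your proof is correct and follows essentially the same approach as the paper: It\^o's formula on $|y_s-\hat y_s|^4$, splitting the $g$- and $\alpha_2$-differences into a $y$-varying part absorbed by Assumption~\ref{assumption-2} and an $x$-varying remainder bounded via Lipschitz continuity and Lemma~\ref{x-diff-s-k}, then Gr\"onwall. For the slow part, the paper applies It\^o's formula to $|\hat x_s-x_s|^4$ and Gr\"onwall rather than your direct H\"older estimate on the drift integral, but since the diffusion terms cancel (Assumption~\ref{assumption-3}) the two arguments are equivalent and yours is slightly more direct.
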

\begin{proof}
  Let $j = \floor[\big]{\frac{s}{\Delta}}$, which is the largest integer smaller or
  equal to $\frac{s}{\Delta}$. Applying Ito's formula and using the Lipschitz condition
  for the coefficients $g$, $\alpha_2$ in Assumptions~\ref{assumption-1},
  the inequality in Assumption~\ref{assumption-2}, the conclusion of
  Lemma~\ref{x-diff-s-k}, as well as inequality (\ref{hodler-young-ineq}), we
  can estimate 
\begin{align}
  &\frac{d\bE|y_s - \hat{y}_s|^4}{ds} \notag \\
  =& \frac{4}{\epsilon} \bE\Big(|y_s - \hat{y}_s|^2\langle y_s -
\hat{y}_s, g(x_s, y_s) - g(x_{j\Delta}, \hat{y}_s)\rangle\Big) +
  \frac{2}{\epsilon}\bE\Big(|y_s - \hat{y}_s|^2 \|\alpha_2(x_s, y_s) -
  \alpha_2(x_{j\Delta}, \hat{y}_s)\|^2 \Big) \notag \\
  & + \frac{4}{\epsilon} \bE\Big(\Big|\big(\alpha_2(x_s, y_s) - \alpha_2(x_{j\Delta},
  \hat{y}_s)\big)^T(y_s - \hat{y}_s)\Big|^2\Big) \notag \\
  \le &\frac{4}{\epsilon} \bE\Big(|y_s - \hat{y}_s|^2\langle y_s -
\hat{y}_s, g(x_s, y_s) - g(x_{j\Delta}, \hat{y}_s)\rangle\Big) +
  \frac{6}{\epsilon}\bE\Big(|y_s - \hat{y}_s|^2 \|\alpha_2(x_s, y_s) -
  \alpha_2(x_{j\Delta}, \hat{y}_s)\|^2 \Big) \notag \\
  \le & \frac{4}{\epsilon}\bE\Big[|y_s - \hat{y}_s|^2 \Big(\langle y_s -
  \hat{y}_s, g(x_s, y_s) - g(x_s, \hat{y}_s)\rangle + 3 \|\alpha_2(x_s, y_s) -
  \alpha_2(x_s, \hat{y}_s)\|^2\Big)\Big] \notag \\
   &+ \frac{4}{\epsilon}\bE\Big[|y_s - \hat{y}_s|^2 \Big(\langle y_s -
  \hat{y}_s, g(x_s, \hat{y}_s) - g(x_{j\Delta}, \hat{y}_s)\rangle + 3
  \|\alpha_2(x_s, \hat{y}_s) -
\alpha_2(x_{j\Delta}, \hat{y}_s)\|^2\Big)\Big] \notag \\
  \le & 
-\frac{4\lambda}{\epsilon} \bE|y_s - \hat{y}_s|^4
+ \frac{C}{\epsilon} \bE\Big(|y_s - \hat{y}_s|^3|x_s - x_{j\Delta}|\Big)
  + \frac{C}{\epsilon} \bE\Big(|y_s - \hat{y}_s|^2|x_s - x_{j\Delta}|^2\Big) \notag \\
  \le & -\frac{2\lambda}{\epsilon} \bE|y_s - \hat{y}_s|^4 + \frac{C}{\epsilon}
  \bE|x_s - x_{j\Delta}|^4 \notag \\
  \le & -\frac{2\lambda}{\epsilon} \bE|y_s - \hat{y}_s|^4 + \frac{C}{\epsilon} \Delta^2 \notag
\end{align}
which, by Gronwall's inequality, yields the first inequality. For the second
inequality, applying Ito's formula, taking Assumption~\ref{assumption-1}, Lemma~\ref{x-diff-s-k} and the
above estimate into account, we obtain
\begin{align}
  \frac{d\bE|\hat{x}_s - x_s|^4}{ds} 
   =& 4 \bE\Big(|\hat{x}_s - x_s|^2 \langle f(x_{j\Delta}, \hat{y}_s) -
  f(x_s, y_s), \hat{x}_s - x_s\rangle\Big) \notag \\
  \le & C \bE\Big[|\hat{x}_s - x_s|^3 \Big(|x_{j\Delta} - x_s| + |\hat{y}_s -
y_s|\Big)\Big] \notag \\
  \le & C \Big(\bE|\hat{x}_s - x_s|^4 + \bE|x_{j\Delta} - x_s|^4 +
  \bE|\hat{y}_s - y_s|^4\Big) \notag  \\
  \le  & C \bE|\hat{x}_s - x_s|^4 + C \Delta^2\,, \notag  
\end{align}
and the conclusion follows again by applying Gronwall's inequality.  \qed
\end{proof}

The following elementary estimate will be useful.
\begin{clm}
  Define $F(x) = |x|^2x$, $x \in \mathbb{R}^k$. We have $|F(x) -
  F(x')| \le \frac{3}{2}\big(|x|^2+|x'|^2\big)|x-x'|$, $\forall x, x' \in
  \mathbb{R}^k$.
  \label{claim-4}
\end{clm}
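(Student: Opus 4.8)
The plan is to prove the inequality by a direct algebraic decomposition of $F(x)-F(x')$, followed by two elementary scalar estimates. First I would write
\[
F(x)-F(x') = |x|^2x-|x'|^2x' = |x|^2(x-x') + \big(|x|^2-|x'|^2\big)x',
\]
which is obtained by adding and subtracting $|x|^2x'$. Taking norms and using the triangle inequality then gives
\[
|F(x)-F(x')| \le |x|^2\,|x-x'| + \big||x|^2-|x'|^2\big|\,|x'|.
\]

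Next I would control the factor $\big||x|^2-|x'|^2\big|$. Factoring it as $\big|(|x|-|x'|)(|x|+|x'|)\big|$ and applying the reverse triangle inequality $\big||x|-|x'|\big|\le |x-x'|$, one obtains $\big||x|^2-|x'|^2\big| \le (|x|+|x'|)\,|x-x'|$. Substituting this bound yields
\[
|F(x)-F(x')| \le \big(|x|^2 + |x|\,|x'| + |x'|^2\big)\,|x-x'|.
\]
Finally, the cross term is absorbed using Young's inequality $|x|\,|x'|\le \tfrac12(|x|^2+|x'|^2)$, so that $|x|^2+|x|\,|x'|+|x'|^2 \le \tfrac32(|x|^2+|x'|^2)$, which gives the claimed estimate.

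Since the argument is purely algebraic and the only ingredients are the triangle and reverse-triangle inequalities together with Young's inequality, there is no real obstacle here; the only point requiring a small amount of care is choosing the decomposition (adding and subtracting $|x|^2x'$) so that exactly the two manageable factors $|x|^2$ and $\big||x|^2-|x'|^2\big|$ appear. An equally viable alternative is the symmetric splitting $F(x)-F(x') = \tfrac12\big[(|x|^2+|x'|^2)(x-x') + (|x|^2-|x'|^2)(x+x')\big]$, after which the same two scalar estimates finish the proof.
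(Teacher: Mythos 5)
Your proof is correct, but it takes a genuinely different route from the paper's. The paper proves Claim~\ref{claim-4} by writing $F(x)-F(x')$ as the integral $\int_0^1 \frac{d}{dt}F\big((1-t)x+tx'\big)\,dt$, bounding the integrand by $3|(1-t)x+tx'|^2\,|x-x'|$, and then using convexity of $|\cdot|^2$ (so that $|(1-t)x+tx'|^2\le (1-t)|x|^2+t|x'|^2$) together with $\int_0^1[(1-t)|x|^2+t|x'|^2]\,dt=\tfrac12(|x|^2+|x'|^2)$ to land on the constant $\tfrac32$. You instead use the algebraic add-and-subtract decomposition $F(x)-F(x')=|x|^2(x-x')+(|x|^2-|x'|^2)x'$, followed by the reverse triangle inequality and Young's inequality. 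Both arguments are elementary and both sharpen to exactly the same constant $\tfrac32(|x|^2+|x'|^2)$; yours avoids the fundamental theorem of calculus entirely and is perhaps slightly shorter, while the paper's integral-mean-value formulation is the kind of template that generalizes more mechanically to other homogeneous nonlinearities (it reduces the whole bound to a single pointwise estimate on the derivative of $F$ along the segment). Either would serve the paper's purpose equally well.
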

\begin{proof}
  We have 
  \begin{align}
    & |F(x) - F(x')| \notag \\
    =& \Big|\int_0^1 \frac{d}{dt} F\big((1-t)x + tx'\big) dt\Big| \notag \\
      =& \Big|\int_0^1 \Big[2\langle (1-t)x + tx',
  x'-x\rangle\big((1-t)x + tx'\big) + |(1-t)x + tx'|^2(x'-x)\Big] dt\Big| \notag \\
  \le& 3\int_0^1 |(1-t)x + tx'|^2 |x'-x| dt \le
  \frac{3}{2}\big(|x|^2+|x'|^2\big)|x-x'|\,.
  \notag 
  \end{align}
\qed
\end{proof}

As the next step, we show that the averaged process $\widetilde{x}_s$ in
(\ref{dynamics-averaged}) can be approximated by the time-discrete process
(\ref{auxiliary}) as well.
\begin{lemma}
  Under Assumptions \ref{assumption-1}--\ref{assumption-3}, we have
\begin{align}
\max_{0 \le s \le T} \bE|\hat{x}_s - \widetilde{x}_s|^4 \le C\left(\frac{\epsilon}{\lambda\Delta} +
  \Delta\right) e^{C\left(1+\frac{\epsilon}{\lambda\Delta}\right)T}\,.
\end{align}
where the constant $C>0$ is independent of $\epsilon, \Delta$ and can be chosen
uniformly for $x_0$, $y_0$ which are contained in some bounded domain
of $\mathbb{R}^k \times \mathbb{R}^l$.
Especially, for $\Delta=\epsilon^{\frac{1}{2}}$, we have $\max\limits_{0 \le s \le T} \bE|\hat{x}_s -
\widetilde{x}_s|^4 \le C\epsilon^{\frac{1}{2}}$.
\label{lemma-hatx-tildex}
\end{lemma}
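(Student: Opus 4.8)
The plan is to run a Gronwall argument for $g(t):=\bE|\hat{x}_t-\widetilde{x}_t|^4$. Put $\zeta_s:=\hat{x}_s-\widetilde{x}_s$ and $j(s):=\floor[\big]{s/\Delta}$; since $\alpha_1$ is $y$-independent (Assumption~\ref{assumption-3}) we have $\widetilde{\alpha}=\alpha_1$, so $d\zeta_s=\big(f(x_{j(s)\Delta},\hat{y}_s)-\widetilde{f}(\widetilde{x}_s)\big)\,ds+\Sigma_s\,dw^1_s$ with $\Sigma_s:=\alpha_1(x_s)-\alpha_1(\widetilde{x}_s)$ and $\zeta_0=0$; note that the martingale part of $\zeta$ lives on $w^1$, independently of $w^2$. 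Applying Ito to $|\zeta_s|^4$ and taking expectations (the stochastic term is a genuine martingale by Lemma~\ref{lemma-4th-stability}, Remark~\ref{rmk-2} and their routine higher-order analogues) gives, for $t\in[0,T]$,
\[
  g(t)=\int_0^t 4\,\bE\!\big[|\zeta_s|^2\zeta_s^T\big(f(x_{j(s)\Delta},\hat{y}_s)-\widetilde{f}(\widetilde{x}_s)\big)\big]ds+\int_0^t \bE\!\big[2|\zeta_s|^2\|\Sigma_s\|^2+4|\Sigma_s^T\zeta_s|^2\big]ds\,.
\]
Since $\|\Sigma_s\|\le C|x_s-\widetilde{x}_s|\le C(|x_s-\hat{x}_s|+|\zeta_s|)$, the diffusion integral is, by Young's inequality and Lemma~\ref{auxiliary-error} (which gives $\bE|x_s-\hat{x}_s|^4\le C\Delta^2$), at most $C\int_0^t(g(s)+\Delta^2)\,ds$.

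For the drift I write $f(x_{j(s)\Delta},\hat{y}_s)-\widetilde{f}(\widetilde{x}_s)=D^{(1)}_s+D^{(2)}_s+D^{(3)}_s$ with the averaging error $D^{(1)}_s:=f(x_{j(s)\Delta},\hat{y}_s)-\widetilde{f}(x_{j(s)\Delta})$, the within-block slow drift $D^{(2)}_s:=\widetilde{f}(x_{j(s)\Delta})-\widetilde{f}(\hat{x}_s)$, and $D^{(3)}_s:=\widetilde{f}(\hat{x}_s)-\widetilde{f}(\widetilde{x}_s)$. As $\widetilde{f}$ is Lipschitz (Remark~\ref{rmk-1}), $|D^{(3)}_s|\le C|\zeta_s|$ contributes $\le C\int_0^t g(s)\,ds$; and $|D^{(2)}_s|\le C|x_{j(s)\Delta}-\hat{x}_s|\le C(|x_{j(s)\Delta}-x_s|+|x_s-\hat{x}_s|)$ has fourth moment $\le C\Delta^2$ by Lemma~\ref{x-diff-s-k} and Lemma~\ref{auxiliary-error}, so, via $|\zeta_s|^3|D^{(2)}_s|\le C(|\zeta_s|^4+|D^{(2)}_s|^4)$, this term also contributes $\le C\int_0^t(g(s)+\Delta^2)\,ds$. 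Everything therefore reduces to bounding $4\int_0^t\bE[|\zeta_s|^2\zeta_s^T D^{(1)}_s]\,ds$.

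This is the crux: $D^{(1)}_s$ is $O(1)$ pointwise and only averages out over a block, so one must exploit the exponential mixing of the fast dynamics; simply freezing $|\zeta_s|^2\zeta_s$ at $s=j\Delta$ would lose a factor $\Delta^{1/2}$, so instead I introduce the Poisson corrector. For $1\le i\le k$ let $\chi_i(x,\cdot)$ solve $\mathcal{L}_0^x\chi_i(x,\cdot)=f_i(x,\cdot)-\widetilde{f}_i(x)$ with $\int\chi_i(x,y)\rho_x(y)\,dy=0$, $\mathcal{L}_0^x$ being the frozen fast generator; Assumption~\ref{assumption-2} (exponential mixing, as used in Appendix~\ref{app-2}, cf.~Lemma~\ref{lemma-stationary}) and Assumption~\ref{assumption-1} give $|\chi(x,y)|\le C\lambda^{-1}(1+|x|+|y|)$ and $|\nabla_y\chi(x,y)|\le C$. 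On a block $[j\Delta,(j+1)\Delta)$, where $\hat{y}$ has generator $\epsilon^{-1}\mathcal{L}_0^{x_{j\Delta}}$, Ito's formula yields $D^{(1)}_s\,ds=d\big[\epsilon\,\chi(x_{j\Delta},\hat{y}_s)\big]-\sqrt{\epsilon}\,\big(\nabla_y\chi(x_{j\Delta},\hat{y}_s)\big)^T\alpha_2(x_{j\Delta},\hat{y}_s)\,dw^2_s$. Substituting this, the $w^2$-integral is a mean-zero martingale, and I integrate the $d[\epsilon\chi]$-term by parts on each block, the cross-variation with $|\zeta_s|^2\zeta_s$ vanishing since $\zeta$ is $w^1$-driven. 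The boundary terms, summed over blocks, telescope up to the jump of $\chi$ in its first argument, leaving $\sum_j \epsilon\,|\zeta_{j\Delta}|^2\zeta_{j\Delta}^T\big[\chi(x_{(j-1)\Delta},\hat{y}_{j\Delta})-\chi(x_{j\Delta},\hat{y}_{j\Delta})\big]$ plus an endpoint; using Claim~\ref{claim-4}, Young's and H\"older's inequalities, the increment estimates for $x$ (a version of Lemma~\ref{x-diff-s-k}) and $\sum_j\Delta\,\bE|\zeta_{j\Delta}|^4\le C\int_0^t g(s)\,ds+C\Delta^2T$ (the latter from $\bE|\zeta_s-\zeta_{j(s)\Delta}|^4\le C\Delta^2$), this is $\le\frac{C\epsilon}{\lambda\Delta}\int_0^t g(s)\,ds+\frac{C\epsilon}{\lambda\Delta}$. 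Finally the remaining term $-4\int_0^t\epsilon\,\chi(x_{j(s)\Delta},\hat{y}_s)^T\,d\big(|\zeta_s|^2\zeta_s\big)$, after expanding $d(|\zeta_s|^2\zeta_s)$ by Ito (derivatives of $x\mapsto|x|^2x$ controlled as in Claim~\ref{claim-4}) and using the bound on $|\chi|$, the linear growth of the $\zeta$-drift, $\|\Sigma_s\|$ and the moment bounds, contributes a drift part $\le C\epsilon\int_0^t(g(s)+1)\,ds$ and a mean-zero $w^1$-martingale.

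Collecting everything and using $\Delta^2\le\Delta$ and $\epsilon\le\frac{\epsilon}{\lambda\Delta}$ (valid for $\Delta\le1$, $\lambda$ fixed), one gets $g(t)\le C\big(1+\frac{\epsilon}{\lambda\Delta}\big)\int_0^t g(s)\,ds+C\big(\frac{\epsilon}{\lambda\Delta}+\Delta\big)$ on $[0,T]$; Gronwall's inequality then yields $g(t)\le C\big(\frac{\epsilon}{\lambda\Delta}+\Delta\big)e^{C(1+\epsilon/(\lambda\Delta))T}$ uniformly in $t$, hence for $\max_{0\le s\le T}$, with the uniformity in $x_0,y_0$ over bounded sets inherited from the estimates used; and $\Delta=\epsilon^{1/2}$ makes $\frac{\epsilon}{\lambda\Delta}+\Delta\le C\epsilon^{1/2}$ with bounded exponent, giving $\max_{0\le s\le T}\bE|\hat{x}_s-\widetilde{x}_s|^4\le C\epsilon^{1/2}$. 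The main obstacle is the treatment of the non-decaying averaging term $D^{(1)}$: routing it through the corrector to avoid the $\Delta^{1/2}$ freezing loss, and then carefully accounting for the corrector's block-boundary contributions, which are precisely what generate the factor $\frac{\epsilon}{\lambda\Delta}$ in both the Gronwall coefficient and the remainder, while keeping every cross-term absorbable either into $g(s)$ or into an $O\big(\Delta+\frac{\epsilon}{\lambda\Delta}\big)$ source; the rest is standard moment bookkeeping built on Lemmas~\ref{lemma-4th-stability}--\ref{x-diff-s-k}.
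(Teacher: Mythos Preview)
Your argument is essentially correct and reaches the same pre-Gronwall inequality
\[
g(t)\le C\Big(1+\frac{\epsilon}{\lambda\Delta}\Big)\int_0^t g(s)\,ds+C\Big(\frac{\epsilon}{\lambda\Delta}+\Delta\Big),
\]
but the route you take for the averaging term is genuinely different from the paper's. For the crucial contribution $\int_0^t\bE\big[|\zeta_s|^2\zeta_s^TD^{(1)}_s\big]ds$ you invoke the Poisson corrector $\chi$ solving $\mathcal{L}_0^x\chi=f-\widetilde f$, rewrite $D^{(1)}_s\,ds$ as $d[\epsilon\chi]-\sqrt{\epsilon}\,(\nabla_y\chi)^T\alpha_2\,dw^2_s$, and integrate by parts blockwise. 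The paper instead freezes $F(\zeta_r)=|\zeta_r|^2\zeta_r$ at the left endpoint $j\Delta$ and uses the exponential mixing estimate of Lemma~\ref{exp-converge} directly: conditioning on $\mathcal{F}_{j\Delta}$, one has $\big|\bE_{j\Delta}f(x_{j\Delta},\hat{y}_r)-\widetilde f(x_{j\Delta})\big|\le C(1+|x_{j\Delta}|+|\hat{y}_{j\Delta}|)e^{-\lambda(r-j\Delta)/\epsilon}$, so each block contributes $O(\epsilon/\lambda)$ and the sum over $T/\Delta$ blocks yields the $\epsilon/(\lambda\Delta)$ factor; the freezing error (their $I_2$) is then controlled via Claim~\ref{claim-4} and the within-block increment bounds.

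Your remark that ``simply freezing $|\zeta_s|^2\zeta_s$ at $s=j\Delta$ would lose a factor $\Delta^{1/2}$'' is not quite right: the paper does precisely this freezing and it succeeds, because the freezing error $I_2$ involves $\bE|\text{increment}|^4\le C\Delta^2$ paired via H\"older with bounded moments, giving $O(\Delta)$, which is exactly the order needed. So the corrector is not forced here; it is an alternative. What your approach buys is a more systematic mechanism (standard in homogenization) that avoids the explicit block-by-block conditioning; what the paper's approach buys is that it is entirely self-contained within the lemmas already established, whereas your proof requires existence and $C^2$-in-$y$ regularity of $\chi$ together with the growth bounds $|\chi(x,y)|\le C\lambda^{-1}(1+|x|+|y|)$, $|\nabla_y\chi|\le C$, and Lipschitz dependence of $\chi$ on $x$ (for the block-boundary jump). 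These are standard consequences of Assumptions~\ref{assumption-1}--\ref{assumption-2} but are not proved in the paper, so you should either cite a reference or sketch why the probabilistic representation $\chi(x,y)=\int_0^\infty\big(\widetilde f(x)-\bE^y f(x,\xi^x_t)\big)\,dt$ has the required properties. Also note that the endpoint boundary term at $s=t$ produces a contribution $\le C\epsilon\,g(t)+C\epsilon$ on the right-hand side, which must be absorbed into the left for $\epsilon$ small; this is harmless but worth stating.
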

\begin{proof}
  We apply Ito's formula to $|\hat{x}_s - \widetilde{x}_s|^4$ and take
  expectations similarly as before. Using the function $F$ defined in
  Claim~\ref{claim-4}, we can estimate 
\begin{align}
   & \bE|\hat{x}_s - \widetilde{x}_s|^4 \notag \\
\le& 4 \int_0^s \bE \Big(|\hat{x}_r - \widetilde{x}_r|^2\big\langle\hat{x}_r - \widetilde{x}_r,
  f(x_{\floor{\frac{r}{\Delta}}\Delta}, \hat{y}_r) - \widetilde{f}(\widetilde{x}_r)
 \rangle\Big) dr 
   + 6 \int_0^s \bE\Big(|\hat{x}_r - \widetilde{x}_r|^2|\alpha_1(x_r) - \alpha_1(\widetilde{x}_r)|^2\Big) dr \notag\\
 = & 
   4 \int_0^s \bE \Big(\big\langle F(\hat{x}_{\floor{\frac{r}{\Delta}}\Delta} -
   \widetilde{x}_{\floor{\frac{r}{\Delta}}\Delta}),
   f(x_{\floor{\frac{r}{\Delta}}\Delta}, \hat{y}_r) -
   \widetilde{f}(x_{\floor{\frac{r}{\Delta}}\Delta}) \big\rangle\Big) dr \notag\\
 & +4 \int_0^s \bE\Big(\big\langle F(\hat{x}_r - \widetilde{x}_r) -
   F(\hat{x}_{\floor{\frac{r}{\Delta}}\Delta} - \widetilde{x}_{\floor{\frac{r}{\Delta}}\Delta}),
   f(x_{\floor{\frac{r}{\Delta}}\Delta}, \hat{y}_r) -
   \widetilde{f}(x_{\floor{\frac{r}{\Delta}}\Delta}) \big\rangle
   \Big) dr \notag\\
 & + 4 \int_0^s \bE\Big(\big\langle F(\hat{x}_r - \widetilde{x}_r),
   \widetilde{f}(x_{\floor{\frac{r}{\Delta}}\Delta}) -
   \widetilde{f}(\widetilde{x}_r)\big\rangle \Big) dr \notag\\
  & + 6 \int_0^s \bE\Big(|\hat{x}_r - \widetilde{x}_r|^2|\alpha_1(x_r) -
 \alpha_1(\widetilde{x}_r)|^2\Big) dr \notag\\
 = & I_1 + I_2 + I_3 + I_4\,.\notag
\end{align}
We estimate the above four terms in the sum separately.
For $I_1$,  
we have 
\begin{align}
  |I_1| \le & 4\sum_{j=0}^{\floor{s/\Delta}} \int_{j\Delta}^{[(j+1)\Delta]\wedge s} \bE
  \Big(|\hat{x}_{j\Delta} -
  \widetilde{x}_{j\Delta}|^3 |\bE_{j\Delta}f(x_{j\Delta},
  \hat{y}_r) - \widetilde{f}(x_{j\Delta})|\Big) dr \notag \\
  \le & C\sum_{j=0}^{\floor{s/\Delta}} \int_{j\Delta}^{[(j+1)\Delta]\wedge s} \bE
  \Big(|\hat{x}_{j\Delta} -
  \widetilde{x}_{j\Delta}|^3 (|x_{j\Delta}| + |\hat{y}_{j\Delta}|
  + 1)\Big) e^{-\frac{\lambda(r - j\Delta)}{\epsilon}} dr \notag \\
  \le & \frac{\epsilon C}{\lambda} \bE \Big[\Big( \sum_{j=0}^{\floor{s/\Delta}} 
  |\hat{x}_{j\Delta} - \widetilde{x}_{j\Delta}|^4\Big)^\frac{3}{4}
  \Big(\sum_{j=0}^{\floor{s/\Delta}}  \big(|x_{j\Delta}| + |\hat{y}_{j\Delta}|
+ 1\big)^4\Big)^{\frac{1}{4}}\Big] \notag\\
\le & \frac{\epsilon C}{\lambda}  \Big(\bE\sum_{j=0}^{\floor{s/\Delta}} 
  |\hat{x}_{j\Delta} - \widetilde{x}_{j\Delta}|^4\Big)^\frac{3}{4}
  \Big(\bE\sum_{j=0}^{\floor{s/\Delta}}  (|x_{j\Delta}| + |\hat{y}_{j\Delta}|
+ 1)^4\Big)^{\frac{1}{4}} \notag\\
\le & \frac{\epsilon C}{\lambda\Delta}  \Big(\bE\sum_{j=0}^{\floor{s/\Delta}} 
  |\hat{x}_{j\Delta} - \widetilde{x}_{j\Delta}|^4 \Delta + 
  \bE\sum_{j=0}^{\floor{s/\Delta}}  (|x_{j\Delta}| + |\hat{y}_{j\Delta}|
+ 1)^4 \Delta \Big) \notag\\
\le & \frac{\epsilon C}{\lambda\Delta} \bE\int_0^s |\hat{x}_r - \widetilde{x}_r|^4 dr
+ \frac{\epsilon C}{\lambda\Delta} \bE\int_0^s
\Big||\hat{x}_{\floor{\frac{r}{\Delta}}\Delta} - \widetilde{x}_{\floor{\frac{r}{\Delta}}\Delta}|^4 - |\hat{x}_r - \widetilde{x}_r|^4\Big| dr \notag\\
& + \frac{\epsilon C}{\lambda\Delta}  \bE\sum_{j=0}^{\floor{s/\Delta}}  \Big(|x_{j\Delta}| + |\hat{y}_{j\Delta}|
+ 1\Big)^4 \Delta\,.  \notag
\end{align}
In the first inequality above, $\bE_{j\Delta}$ denotes the expectation
conditioned on $\hat{y}_{s}$ at time $s=j\Delta$. We have used
Lemma~\ref{exp-converge} in Appendix~\ref{app-2} to derive the second
inequality. H\"older inequality and Young's inequality (\ref{hodler-young-ineq}) were also used.
Therefore, by Lemma~\ref{lemma-4th-stability} and Remark~\ref{rmk-2}, the last
inequality implies 
\begin{align}
  |I_1| \le 
  \frac{\epsilon C}{\lambda\Delta} \bE\int_0^s |\hat{x}_r - \widetilde{x}_r|^4 dr
+  \frac{Cs\epsilon }{\lambda\Delta} .
\notag
\end{align}
For $I_2$, since functions $f, \widetilde{f}$ are Lipschitz, we have 
\begin{align*}
  |f(x_{\floor{\frac{r}{\Delta}}\Delta}, \hat{y}_r)| \le& \,C\big(1 +
  |x_{\floor{\frac{r}{\Delta}}\Delta}| + |\hat{y}_r|\big)\,,  \\
|\widetilde{f}(x_{\floor{\frac{r}{\Delta}}\Delta})| \le& \,C\big(1 +
  |x_{\floor{\frac{r}{\Delta}}\Delta}|\big)\,.
\end{align*}
Then using Claim~\ref{claim-4}, Lemma~\ref{lemma-4th-stability} and
Lemma~\ref{x-diff-s-k}, as well as H\"older and Young's inequalities
(\ref{hodler-young-ineq}),
we can estimate 
\begin{align*}
  |I_2| \le & C\bE \int_0^s \Big(|\hat{x}_r - \widetilde{x}_r|^2 +
  |\hat{x}_{\floor{\frac{r}{\Delta}}\Delta} -
  \widetilde{x}_{\floor{\frac{r}{\Delta}}\Delta}|^2\Big)\\
  & \times \Big|(\hat{x}_r -
  \hat{x}_{\floor{\frac{r}{\Delta}}\Delta}) - (\widetilde{x}_r -
  \widetilde{x}_{\floor{\frac{r}{\Delta}}\Delta})\Big| \Big(1 +
  |x_{\floor{\frac{r}{\Delta}}\Delta}| + |\hat{y}_r|\Big) dr \\
  \le & C\bE\int_0^s \Big(|\hat{x}_r - \widetilde{x}_r|^2 + |(\hat{x}_r -
  \hat{x}_{\floor{\frac{r}{\Delta}}\Delta})  - (\widetilde{x}_r -
  \widetilde{x}_{\floor{\frac{r}{\Delta}}\Delta})|^2\Big)\\
  & \times \Big|(\hat{x}_r -  \hat{x}_{\floor{\frac{r}{\Delta}}\Delta})  - (\widetilde{x}_r -
  \widetilde{x}_{\floor{\frac{r}{\Delta}}\Delta})\Big|
   \Big(1 + |x_{\floor{\frac{r}{\Delta}}\Delta}| + |\hat{y}_r|\Big) dr  \\
  \le & C \bE\int_0^s |\hat{x}_r - \widetilde{x}_r|^4 dr  + C
  \bE\int_0^s\Big|(\hat{x}_r - \hat{x}_{\floor{\frac{r}{\Delta}}\Delta})  - (\widetilde{x}_r -
  \widetilde{x}_{\floor{\frac{r}{\Delta}}\Delta})\Big|^3 \Big(1 +
  |x_{\floor{\frac{r}{\Delta}}\Delta}| + |\hat{y}_r|\Big) dr  \\
  & + C \bE\int_0^s\Big|(\hat{x}_r - \hat{x}_{\floor{\frac{r}{\Delta}}\Delta})  - (\widetilde{x}_r -
  \widetilde{x}_{\floor{\frac{r}{\Delta}}\Delta})\Big|^2 \Big(1 +
  |x_{\floor{\frac{r}{\Delta}}\Delta}| + |\hat{y}_r|\Big)^2 dr  \\
  \le & C \bE\int_0^s |\hat{x}_r - \widetilde{x}_r|^4 dr \\
  &  + C
  \int_0^s\Big[\bE\big|(\hat{x}_r - \hat{x}_{\floor{\frac{r}{\Delta}}\Delta})  - (\widetilde{x}_r -
  \widetilde{x}_{\floor{\frac{r}{\Delta}}\Delta})\big|^4\Big]^{\frac{3}{4}}
  \Big[\bE\big(1 + |x_{\floor{\frac{r}{\Delta}}\Delta}| + |\hat{y}_r|\big)^4\Big]^{\frac{1}{4}} dr  \\
  & + C \int_0^s\Big[\bE\big|(\hat{x}_r - \hat{x}_{\floor{\frac{r}{\Delta}}\Delta})  - (\widetilde{x}_r -
  \widetilde{x}_{\floor{\frac{r}{\Delta}}\Delta})\big|^4\Big]^{\frac{1}{2}}
  \Big[\bE\big(1 + |x_{\floor{\frac{r}{\Delta}}\Delta}| + |\hat{y}_r|\big)^4\Big]^{\frac{1}{2}} dr  \\
  \le & C \bE\int_0^s |\hat{x}_r - \widetilde{x}_r|^4 dr  + C s(\Delta +
  \Delta^{\frac{3}{2}}) \,.
\end{align*}
For $I_3$, since function $\widetilde{f}$ is Lipschitz, we have 
\begin{align}
  |I_3|\le & C\bE\int_0^s |\hat{x}_r -
  \widetilde{x}_r|^3|x_{\floor{\frac{r}{\Delta}}\Delta} - \widetilde{x}_r| dr \notag \\
  = & C\bE\int_0^s |\hat{x}_r -
  \widetilde{x}_r|^3 \Big|(x_{\floor{\frac{r}{\Delta}}\Delta} - x_r) + (x_r -
  \hat{x}_r) + (\hat{x}_r - \widetilde{x}_r)\Big| dr \notag \\
  \le & C\bE\int_0^s |\hat{x}_r -
  \widetilde{x}_r|^4 dr + C\bE\int_0^s |\hat{x}_r -
  \widetilde{x}_r|^3|x_{\floor{\frac{r}{\Delta}}\Delta} - x_r| dr + C\bE\int_0^s |\hat{x}_r 
  - \widetilde{x}_r|^3|x_r - \hat{x}_r| dr \notag\\
  \le & C\bE\int_0^s |\hat{x}_r -
  \widetilde{x}_r|^4 dr + C\bE\int_0^s |x_{\floor{\frac{r}{\Delta}}\Delta} -
  x_r|^4 dr + C\bE\int_0^s |x_r - \hat{x}_r|^4 dr \notag\\
  \le&  C\bE\int_0^s |\hat{x}_r - \widetilde{x}_r|^4 dr + Cs \Delta^2\,,\notag
\end{align}
where Lemma~\ref{x-diff-s-k}, Lemma~\ref{auxiliary-error} and Young's
inequality have been used. 

Finally, using that coefficient $\alpha_1$ is Lipschitz and Lemma~\ref{auxiliary-error}, we obtain the following bound for $I_{4}$: 
\begin{align}
|I_4| \le & C\bE\int_0^s |\hat{x}_r -
  \widetilde{x}_r|^2|x_r - \widetilde{x}_r|^2 dr \notag\\
  =  & C\bE\int_0^s |\hat{x}_r - \widetilde{x}_r|^2|(x_r - \hat{x}_r) +
  (\hat{x}_r - \widetilde{x}_r)|^2 dr \notag\\
  \le  & C\bE\int_0^s |\hat{x}_r - \widetilde{x}_r|^4 dr + C\bE\int_0^s |\hat{x}_r -
  \widetilde{x}_r|^2|x_r - \hat{x}_r|^2 dr  \notag\\
  \le  & C\bE\int_0^s |\hat{x}_r - \widetilde{x}_r|^4 dr + C\bE\int_0^s |x_r - \hat{x}_r|^4 dr  \notag\\
  \le &  C\bE\int_0^s |\hat{x}_r - \widetilde{x}_r|^4 dr + Cs\Delta^2\,. \notag 
\end{align}
Combining the above estimates, we obtain the bound (assuming $\Delta \le 1$)
\begin{align}
  \bE|\hat{x}_s - \widetilde{x}_s|^4 \le C\left(1 + \frac{\epsilon}{\lambda\Delta}\right)
  \bE\int_0^s |\hat{x}_r - \widetilde{x}_r|^4 dr + Cs
\left(\frac{\epsilon}{\lambda\Delta} + \Delta\right)\,,
\end{align}
and Gronwall's inequality yields the assertion 
\begin{align}
  \bE|\hat{x}_s - \widetilde{x}_s|^4 \le C\left(\frac{\epsilon}{\lambda\Delta} +
  \Delta\right)
  e^{C\left(1+\frac{\epsilon}{\lambda\Delta}\right)s}\,.
\end{align}
\qed
\end{proof}

 Summarizing Lemma~\ref{auxiliary-error} and Lemma~\ref{lemma-hatx-tildex}, we
 have proved the following estimate for the 4th moments of processes $x_s$
 and $\widetilde{x}_s$ (see \cite{liu2010} for stronger result about the $2$nd moments):

\begin{theorem}
  Suppose that Assumption \ref{assumption-1}--\ref{assumption-3} hold.
  Then there exists $C>0$, independent of $\epsilon$ and can be chosen
  uniformly for $x_0$, $y_0$ which are contained in some bounded domain
of $\mathbb{R}^k \times \mathbb{R}^l$, such that 
    \begin{align}
      \max_{0 \le s \le T} \bE|x_s - \widetilde{x}_s|^4 \le C \epsilon^{\frac{1}{2}} \,. \notag
    \end{align}
  \label{thm-average-limit-xs}
\end{theorem}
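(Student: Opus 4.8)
The plan is to derive the estimate directly from the two approximation lemmas already established, using the time-discrete auxiliary process $\hat{x}_s$ of (\ref{auxiliary}) as an intermediate object. First I would apply the elementary convexity bound $|a+b|^4 \le 8(|a|^4 + |b|^4)$ with $a = x_s - \hat{x}_s$ and $b = \hat{x}_s - \widetilde{x}_s$ to get
\begin{align*}
  \bE|x_s - \widetilde{x}_s|^4 \le 8\,\bE|x_s - \hat{x}_s|^4 + 8\,\bE|\hat{x}_s - \widetilde{x}_s|^4\,.
\end{align*}
By Lemma~\ref{auxiliary-error} the first term on the right is bounded by $C\Delta^2$, and by Lemma~\ref{lemma-hatx-tildex} the second term is bounded by $C\big(\tfrac{\epsilon}{\lambda\Delta} + \Delta\big)\exp\big(C(1 + \tfrac{\epsilon}{\lambda\Delta})T\big)$, both uniformly in $s \in [0,T]$ and uniformly for $(x_0,y_0)$ ranging over a fixed bounded domain of $\mathbb{R}^k \times \mathbb{R}^l$.

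The only real decision is the choice of the partition size $\Delta = \Delta(\epsilon)$, which must balance the discretization error $\Delta^2$ against the averaging error $\tfrac{\epsilon}{\Delta} + \Delta$ while keeping the prefactor $\exp(C\epsilon T/(\lambda\Delta))$ bounded as $\epsilon \to 0$. I would take $\Delta = \epsilon^{1/2}$, so that $\epsilon/\Delta = \epsilon^{1/2} \le 1$ for $\epsilon \le 1$ (hence the exponential is $\le$ a constant independent of $\epsilon$), the discretization term becomes $\Delta^2 = \epsilon$, and the averaging term becomes $\tfrac{1}{\lambda}\epsilon^{1/2} + \epsilon^{1/2} = \mathcal{O}(\epsilon^{1/2})$. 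Adding the two contributions gives $\bE|x_s - \widetilde{x}_s|^4 \le C(\epsilon + \epsilon^{1/2}) \le C\epsilon^{1/2}$ for $\epsilon \le 1$, with $C$ depending only on $T$, $\lambda$, the Lipschitz and boundedness constants in Assumptions~\ref{assumption-1}--\ref{assumption-3}, and the chosen bounded domain; taking the supremum over $s$ costs nothing since all bounds above are already $s$-independent.

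I do not expect a substantial obstacle here: the theorem is essentially a corollary of Lemmas~\ref{auxiliary-error} and~\ref{lemma-hatx-tildex}, and the genuinely hard work — the Gronwall arguments controlling $\bE|x_s - \hat{x}_s|^4$ and $\bE|\hat{x}_s - \widetilde{x}_s|^4$ — has already been carried out there. The one point worth checking carefully is that the constants appearing in those two lemmas are independent of $\Delta$ as well as of $\epsilon$, so that it is legitimate to evaluate the bounds at the $\epsilon$-dependent value $\Delta = \epsilon^{1/2}$; inspection of their proofs confirms this, as the relevant Gronwall constants are built only from the Lipschitz/boundedness constants of $f$, $g$, $\alpha_1$, $\alpha_2$ and from $T$. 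One could try to optimize $\Delta$ further, but $\Delta = \epsilon^{1/2}$ already yields the stated rate $\epsilon^{1/2}$, which is all that is needed downstream for Theorem~\ref{main-result-4}.
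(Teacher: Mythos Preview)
Your proposal is correct and mirrors the paper's approach exactly: the paper states the theorem as an immediate consequence of Lemma~\ref{auxiliary-error} and Lemma~\ref{lemma-hatx-tildex}, combined via the triangle-type inequality and the choice $\Delta = \epsilon^{1/2}$ (which is already singled out in the statement of Lemma~\ref{lemma-hatx-tildex}).
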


As the next step, we consider derivatives of the auxiliary processes (\ref{auxiliary})
\begin{align}
  \begin{split}
    &d\hat{x}_{s,x_i} = \big(\nabla_{x} f\, x_{j\Delta,x_i} + \nabla_{y} f\,
    \hat{y}_{s, x_i}\big) ds + \big(\nabla_{x} \alpha_1\, x_{s,x_i}\big) dw_s^1 \\
    &d\hat{y}_{s,x_i} = \frac{1}{\epsilon} \big(\nabla_{x} g\, x_{j\Delta,x_i}
    + \nabla_{y} g\, \hat{y}_{s, x_i}\big) ds + \frac{1}{\sqrt{\epsilon}}
    \big(\nabla_{x} \alpha_2\, x_{j\Delta,x_i} + \nabla_{y} \alpha_2\,
    \hat{y}_{s,x_i}\big) dw_s^2 \,,
\end{split}
\quad 1 \le i \le k
\label{auxiliary-derivative}
\end{align}
where $j = \floor{\frac{s}{\Delta}}$ and we have assumed that Assumption~\ref{assumption-3} holds.
The following lemma shows that (\ref{auxiliary-derivative}) is an approximation of 
(\ref{1st-variation-x}).
\begin{lemma}
  Under Assumptions \ref{assumption-1}--\ref{assumption-3}, there exists
  $C>0$, independent of $\epsilon, \Delta$ 
and can be chosen uniformly for $x_0$, $y_0$ which are contained in some bounded domain of $\mathbb{R}^k \times \mathbb{R}^l$, such that 
\begin{align}
  \bE|y_{s, x_i} - \hat{y}_{s, x_i} |^2 \le C \Delta\,, \qquad 
  \bE|x_{s, x_i} - \hat{x}_{s, x_i} |^2 \le C \Delta\,.
\end{align}
\label{average-auxiliary-xy}
\end{lemma}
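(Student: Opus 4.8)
The plan is to replay the Itô-plus-Gronwall scheme of Lemma~\ref{auxiliary-error}, now for the difference processes $\xi_s := x_{s,x_i} - \hat{x}_{s,x_i}$ and $\eta_s := y_{s,x_i} - \hat{y}_{s,x_i}$, the only genuinely singular feature being the $\epsilon^{-1}$ in the $\eta_s$-equation, which will be tamed by the dissipativity of Assumption~\ref{assumption-2}. Write $j=\lfloor s/\Delta\rfloor$. Subtracting (\ref{auxiliary-derivative}) from (\ref{1st-variation-x}) and invoking Assumption~\ref{assumption-3} — which kills the $\nabla_y\alpha_1$ term and makes the diffusion coefficient $\nabla_x\alpha_1(x_s)x_{s,x_i}$ of $\hat{x}_{s,x_i}$ coincide with that of $x_{s,x_i}$, so that $\xi_s$ has \emph{no} martingale part — one obtains
\begin{align*}
  d\xi_s &= \big[\nabla_y f(x_{j\Delta},\hat{y}_s)\,\eta_s + R_s\big]\,ds, \\
  d\eta_s &= \tfrac{1}{\epsilon}\big[\nabla_y g(x_{j\Delta},\hat{y}_s)\,\eta_s + P_s\big]\,ds + \tfrac{1}{\sqrt{\epsilon}}\big[\nabla_y\alpha_2(x_{j\Delta},\hat{y}_s)\,\eta_s + Q_s\big]\,dw^2_s,
\end{align*}
where, writing $\psi$ for any of $f,g,\alpha_2$, the remainder is of the form $\nabla_x\psi(x_s,y_s)x_{s,x_i} - \nabla_x\psi(x_{j\Delta},\hat{y}_s)x_{j\Delta,x_i} + \big(\nabla_y\psi(x_s,y_s) - \nabla_y\psi(x_{j\Delta},\hat{y}_s)\big)y_{s,x_i}$.

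First I would bound $\bE|\eta_s|^2$. Applying Itô to $|\eta_s|^2$ and taking expectations, the leading term is $\tfrac{2}{\epsilon}\big(\langle\eta_s,\nabla_y g(x_{j\Delta},\hat{y}_s)\eta_s\rangle + \|\nabla_y\alpha_2(x_{j\Delta},\hat{y}_s)\eta_s\|^2\big)$, which by (\ref{mixing}) — the constant $3$ there giving ample slack, as the prefactor of $\|\nabla_y\alpha_2\,\eta_s\|^2$ here is $1\le 3$ — is $\le -\tfrac{2\lambda}{\epsilon}|\eta_s|^2$; absorbing the cross terms $\tfrac{2}{\epsilon}\langle\eta_s,P_s\rangle$ and $\tfrac{2}{\epsilon}\langle\nabla_y\alpha_2\eta_s,Q_s\rangle$ with Young's inequality and the boundedness of $\nabla_y\alpha_2$ leaves
\begin{align*}
  \frac{d}{ds}\bE|\eta_s|^2 \le -\frac{\lambda}{\epsilon}\bE|\eta_s|^2 + \frac{C}{\epsilon}\big(\bE|P_s|^2 + \bE\|Q_s\|^2\big).
\end{align*}
The remaining point is $\bE|P_s|^2+\bE\|Q_s\|^2 \le C\Delta$: the piece $\nabla_x\psi(x_s,y_s)(x_{s,x_i}-x_{j\Delta,x_i})$ is controlled by boundedness of $\nabla_x\psi$ together with $\bE|x_{s,x_i}-x_{j\Delta,x_i}|^2 \le (\bE|x_{s,x_i}-x_{j\Delta,x_i}|^4)^{1/2}\le C\Delta$ from Lemma~\ref{x-diff-s-k}(2), while the gradient-increment pieces use that $\nabla\psi$ is Lipschitz ($\psi\in C^2$ with bounded second derivatives), Hölder's inequality, $\bE|x_s-x_{j\Delta}|^4 \le C\Delta^2$ (Lemma~\ref{x-diff-s-k}(1)), $\bE|y_s-\hat{y}_s|^4 \le C\Delta^2$ (Lemma~\ref{auxiliary-error}), and the uniform bound $\bE|x_{j\Delta,x_i}|^4 + \bE|y_{s,x_i}|^4\le C$ (Lemma~\ref{d-xi-4}). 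Since $\eta_0=0$, integrating the linear differential inequality gives $\bE|\eta_s|^2 \le \tfrac{C\Delta}{\lambda}(1-e^{-\lambda s/\epsilon}) \le C\Delta$ on $[0,T]$.

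For $\xi_s$, since $\xi_0=0$ and $d\xi_s$ has no martingale part, $\tfrac{d}{ds}\bE|\xi_s|^2 = 2\bE\langle\xi_s,\nabla_y f(x_{j\Delta},\hat{y}_s)\eta_s + R_s\rangle \le 2\bE|\xi_s|^2 + C\bE|\eta_s|^2 + \bE|R_s|^2$, using boundedness of $\nabla_y f$; the same estimates as above give $\bE|R_s|^2 \le C\Delta$, and we have just shown $\bE|\eta_s|^2 \le C\Delta$, so $\tfrac{d}{ds}\bE|\xi_s|^2 \le 2\bE|\xi_s|^2 + C\Delta$ and Gronwall's inequality yields $\max_{0\le s\le T}\bE|\xi_s|^2 \le C\Delta$. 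Uniformity of $C$ over bounded sets of $(x_0,y_0)$ is inherited from the lemmas just cited. The one real obstacle is the interplay in the $\eta_s$-equation between the $O(\epsilon^{-1})$ dissipation from $\nabla_y g$ and the $O(\epsilon^{-1})$ growth from the quadratic variation $\|\nabla_y\alpha_2\,\eta_s\|^2$; this is exactly what the built-in factor $3$ in (\ref{mixing}) is designed to absorb, after which everything reduces to routine bookkeeping on coefficient increments and the already-established $O(\Delta)$, $O(\Delta^2)$, and uniform-moment estimates.
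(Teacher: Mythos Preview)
Your proposal is correct and follows essentially the same approach as the paper: apply It\^o's formula to $|\eta_s|^2$, exploit the dissipativity (\ref{mixing}) to produce the $-\lambda/\epsilon$ term, bound all remainder increments by $C\Delta$ via Lemmas~\ref{d-xi-4}, \ref{x-diff-s-k}, \ref{auxiliary-error}, integrate, then feed $\bE|\eta_s|^2\le C\Delta$ into the $\xi_s$-equation and Gronwall again. Your packaging of the remainders as $P_s,Q_s,R_s$ is a bit tidier than the paper's term-by-term decomposition, but the ingredients and logic are the same.
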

\begin{proof}
  Let $j = \floor{\frac{s}{\Delta}}$. Applying Ito's formula to $|y_{s,x_i} -
  \hat{y}_{s,x_i}|^2$ and taking expectation, we obtain
\begin{align}
  &\frac{d\bE|y_{s,x_i} - \hat{y}_{s,x_i}|^2}{ds} \notag \\
  =& \frac{2}{\epsilon} \bE\big\langle
  \nabla_{x} g(x_s, y_s) x_{s,x_i} - \nabla_{x} g(x_{j\Delta}, \hat{y}_s)
  x_{j\Delta,x_i}, y_{s,x_i} - \hat{y}_{s,x_i}\big\rangle \notag \\ 
  &+ \frac{2}{\epsilon} \bE\big\langle
  \nabla_{y} g(x_s, y_s) y_{s,x_i} - \nabla_{y} g(x_{j\Delta}, \hat{y}_s)
  \hat{y}_{s,x_i}, y_{s,x_i} - \hat{y}_{s,x_i}\big\rangle \notag \\ 
  &+ \frac{1}{\epsilon} \bE\Big(\big\|\nabla_{x} \alpha_2(x_s, y_s)\, x_{s,x_i} + \nabla_{y}
  \alpha_2(x_s, y_s)\, y_{s,x_i} - \nabla_{x} \alpha_2(x_{j\Delta}, \hat{y}_s)\,
  x_{j\Delta,x_i} - \nabla_{y}\alpha_2(x_{j\Delta}, \hat{y}_s)
  \hat{y}_{s,x_i}\big\|^2\Big)\,.
  \notag
\end{align}
We estimate each terms using H\"older and Young's inequality (\ref{hodler-young-ineq}).
For the first term,  
\begin{align}
  &\bE\big\langle
  \nabla_{x} g(x_s, y_s) x_{s,x_i} - \nabla_{x} g(x_{j\Delta}, \hat{y}_s)
  x_{j\Delta,x_i}, y_{s,x_i} - \hat{y}_{s,x_i}\big\rangle \notag \\ 
 =& \bE\big\langle
  \big(\nabla_{x} g(x_s, y_s) - \nabla_{x} g(x_{j\Delta}, \hat{y}_s) \big) x_{s,x_i} + \nabla_{x} g(x_{j\Delta}, \hat{y}_s)
  \big(x_{s,x_i} -  x_{j\Delta,x_i}\big), y_{s,x_i} - \hat{y}_{s,x_i}\big\rangle \notag \\ 
  \le &
 \frac{4}{\lambda} \bE \big|\big(\nabla_{x} g(x_s, y_s) - \nabla_{x} g(x_{j\Delta}, \hat{y}_s)
  \big) x_{s,x_i}\big|^2  
  +  \frac{4}{\lambda}\bE \big|\nabla_{x} g(x_{j\Delta}, \hat{y}_s) \big(x_{s,x_i} - x_{j\Delta,x_i}\big)\big|^2
  + \frac{\lambda}{4} \bE|y_{s,x_i} - \hat{y}_{s,x_i}|^2 \notag \\
   \le& C\Big[\big(\bE |x_{s,x_i}|^4\big)^{1/2} \big(\bE|x_s - x_{j\Delta}|^4 + \bE|y_s -
  \hat{y}_s|^4\big)^{1/2} +
\bE|x_{s,x_i} -  x_{j\Delta,x_i}|^2\Big] + \frac{\lambda}{4} \bE|y_{s,x_i} - \hat{y}_{s,x_i}|^2\,.\notag
\end{align}
In a similar way, we find the second term : 
\begin{align}
  & \bE\big\langle
  \nabla_{y} g(x_s, y_s) y_{s,x_i} - \nabla_{y} g(x_{j\Delta}, \hat{y}_s)
   \hat{y}_{s,x_i}, y_{s,x_i} - \hat{y}_{s,x_i}\big\rangle \notag \\ 
  =& \bE\big\langle 
  \big(\nabla_{y} g(x_s, y_s) - \nabla_{y} g(x_{j\Delta}, \hat{y}_s)\big)y_{s,x_i}
  + \nabla_{y} g(x_{j\Delta}, \hat{y}_s) (y_{s,x_i}-\hat{y}_{s,x_i}),
  y_{s,x_i} - \hat{y}_{s,x_i}\big\rangle \notag\\ 
  \le& 
 C\Big[\big(\bE |y_{s,x_i}|^4\big)^{1/2} \big(\bE|x_s - x_{j\Delta}|^4 + \bE|y_s -
 \hat{y}_s|^4\big)^{1/2}\Big] + \frac{\lambda}{4}\bE|y_{s,x_i} -
 \hat{y}_{s,x_i}|^2\notag \\
 & + \bE\big\langle \nabla_{y} g(x_{j\Delta}, \hat{y}_s)
 (y_{s,x_i}-\hat{y}_{s,x_i}), y_{s,x_i} - \hat{y}_{s,x_i}\big\rangle \,. \notag
\end{align}
For the third term, 
\begin{align*}
  &\bE\Big(\big\|(\nabla_{x} \alpha_2(x_s, y_s)\, x_{s,x_i} + \nabla_{y}
  \alpha_2(x_s, y_s)\, y_{s,x_i} - \nabla_{x} \alpha_2(x_{j\Delta}, \hat{y}_s)\,
  x_{j\Delta,x_i} - \nabla_{y}\alpha_2(x_{j\Delta}, \hat{y}_s)\,
  \hat{y}_{s,x_i})\big\|^2\Big)  \\
  \le & 4\bE\Big(\big\|\big(\nabla_{x} \alpha_2(x_s, y_s) - \nabla_{x} \alpha_2(x_{j\Delta},
  \hat{y}_s)\big)\, x_{s,x_i}\big\|^2\Big) 
   + 4\bE\Big(\big\|\nabla_{x} \alpha_2(x_{j\Delta},
  \hat{y}_s) \big(x_{s,x_i}- x_{j\Delta,x_i}\big)\big\|^2\Big) \\
  & + 4\bE\Big(\big\| \big(\nabla_{y}
  \alpha_2(x_s, y_s)\, - \nabla_{y}\alpha_2(x_{j\Delta}, \hat{y}_s)\big) \, y_{s,x_i} \big\|^2\Big)   
   + 4\bE\Big(\big\| \nabla_{y}\alpha_2(x_{j\Delta}, \hat{y}_s)
  \big(y_{s,x_i} - \hat{y}_{s,x_i}\big)\big\|^2\Big)   \\
  \le & C\bE\Big|\big(|x_s - x_{j\Delta}| + |y_s - \hat{y}_s|\big)\, x_{s,x_i}\Big|^2 
   + C\bE\big|x_{s,x_i}- x_{j\Delta,x_i}\big|^2 \\
  & +  C\bE\Big|\big(|x_s - x_{j\Delta}| + |y_s - \hat{y}_s|\big)\, y_{s,x_i}\Big|^2 
   + 4\bE\Big(\big\| \nabla_{y}\alpha_2(x_{j\Delta}, \hat{y}_s)
  \big(y_{s,x_i} - \hat{y}_{s,x_i}\big)\big\|^2\Big)   \\
  \le &C\Big[\Big(\big(\bE |y_{s,x_i}|^4\big)^{1/2} + \big(\bE
  |x_{s,x_i}|^4\big)^{1/2}\Big)
  \Big(\bE|x_s - x_{j\Delta}|^4 + \bE|y_s -
\hat{y}_s|^4\Big)^{1/2} + \bE|x_{s,x_i} -  x_{j\Delta,x_i}|^2\Big]   \\
& + 4\bE\| \nabla_{y}
\alpha_2(x_{j\Delta}, \hat{y}_s)\, (y_{s,x_i} - \hat{y}_{s,x_i})\|^2\,.  
\end{align*}
Now combining the above estimates and applying Lemma~\ref{d-xi-4},
Lemma~\ref{x-diff-s-k}, Lemma~\ref{auxiliary-error} as well as inequality
(\ref{mixing}) in Assumption~\ref{assumption-2}, we conclude that 
\begin{align}
  &\frac{d\bE|y_{s,x_i} - \hat{y}_{s,x_i}|^2}{ds} \le - \frac{\lambda}{\epsilon}
  \bE|y_{s,x_i} - \hat{y}_{s,x_i}|^2 + \frac{C\Delta}{\epsilon}\,, \notag 
\end{align}
and the first part of the assertion follows from Gronwall's inequality. In the
same way, we can compute that 
\begin{align}
  &\frac{d\bE|x_{s,x_i} - \hat{x}_{s,x_i}|^2}{ds} \notag\\
  =& 2 \bE\big\langle
  \nabla_{x} f(x_s, y_s) x_{s,x_i} - \nabla_{x} f(x_{j\Delta}, \hat{y}_s)
  x_{k\Delta,x_i}, x_{s,x_i} - \hat{x}_{s,x_i}\big\rangle \notag\\ 
  &+ 2 \bE\big\langle
  \nabla_{y} f(x_s, y_s) y_{s,x_i} - \nabla_{y} f(x_{j\Delta}, \hat{y}_s)
  \hat{y}_{s,x_i}, x_{s,x_i} - \hat{x}_{s,x_i}\big\rangle \notag\\
  =& 2 \bE\big\langle
  \big(\nabla_{x} f(x_s, y_s) - \nabla_{x} f(x_{j\Delta}, \hat{y}_s)\big)
  x_{s, x_i}, x_{s,x_i} - \hat{x}_{s,x_i}\big\rangle 
  + 2 \bE\big\langle
  \nabla_{x} f(x_{j\Delta}, \hat{y}_s)
  \big(x_{s, x_i} - x_{k\Delta,x_i}\big), x_{s,x_i} - \hat{x}_{s,x_i}\big\rangle \notag\\ 
  &+ 2 \bE\big\langle
  \big(\nabla_{y} f(x_s, y_s) - \nabla_{y} f(x_{j\Delta}, \hat{y}_s)\big) y_{s,x_i}, x_{s,x_i} - \hat{x}_{s,x_i}\big\rangle \notag\\
  &+ 2 \bE\big\langle
\nabla_{y} f(x_{j\Delta}, \hat{y}_s)
   \big(y_{s,x_i} - \hat{y}_{s,x_i}\big), x_{s,x_i} - \hat{x}_{s,x_i}\big\rangle \notag\\
  \le & \bE
  \big|\big(\nabla_{x} f(x_s, y_s) - \nabla_{x} f(x_{j\Delta}, \hat{y}_s)\big)
  x_{s, x_i}\big|^2 + \bE|x_{s,x_i} - \hat{x}_{s,x_i}|^2  
  + C \bE\big|\big\langle
  x_{s, x_i} - x_{k\Delta,x_i}, x_{s,x_i} - \hat{x}_{s,x_i}\big\rangle\big| \notag\\ 
  &+ \bE\big|
  \big(\nabla_{y} f(x_s, y_s) - \nabla_{y} f(x_{j\Delta}, \hat{y}_s)\big)
  y_{s,x_i}\big|^2 + \bE|x_{s,x_i} - \hat{x}_{s,x_i}\big|^2\notag\\
  &+ C \bE\big|\big\langle
   y_{s,x_i} - \hat{y}_{s,x_i}, x_{s,x_i} - \hat{x}_{s,x_i}\big\rangle\big| \notag\\
   \le & 
  C\Big[\bE \big|\big(|x_s - x_{j\Delta}| + |y_s - \hat{y}_s|\big) x_{s, x_i}\big|^2   
  + \bE \big|\big(|x_s - x_{j\Delta}| + |y_s - \hat{y}_s|\big) y_{s, x_i}\big|^2  
   + \bE|x_{s,x_i} -  x_{j\Delta,x_i}|^2 \notag \\
   &+ \bE|y_{s,x_i} - \hat{y}_{s,x_i}|^2  +  \bE|x_{s,x_i} - \hat{x}_{s,x_i}|^2\Big] \notag\\
  \le &C\Big[\big((\bE |y_{s,x_i}|^4)^{1/2} + (\bE |x_{s,x_i}|^4)^{1/2}\big)
  \big(\bE|x_s - x_{j\Delta}|^4 + \bE|y_s -
\hat{y}_s|^4\big)^{1/2} + \bE|x_{s,x_i} -  x_{j\Delta,x_i}|^2 + \bE|y_{s,x_i} -
\hat{y}_{s,x_i}|^2\Big]  \notag \\
&+ C \bE|x_{s,x_i} - \hat{x}_{s,x_i}|^2 \notag\\
\le & C\Delta + C \bE|x_{s,x_i} - \hat{x}_{s,x_i}|^2\,, \notag
\end{align}
where Lemma~\ref{d-xi-4}, Lemma~\ref{x-diff-s-k}, Lemma~\ref{auxiliary-error},
as well as the first part of conclusion have been used to obtain the last
inequality. Now Gronwall's inequality implies the second part of the assertion.
\qed
\end{proof}

We continue our study by comparing the processes $\hat{x}_{s,x_i}$ with $\widetilde{x}_{s,x_i}$, where 
\begin{align}
d\widetilde{x}_{s,x_i} &= \nabla_x \widetilde{f}(\widetilde{x}_s)
  \widetilde{x}_{s, x_i} ds + \nabla_x
  \alpha_1(\widetilde{x}_s)\widetilde{x}_{s, x_i} dw^1_s \, .
\end{align}
Recalling (\ref{averaging-coeff}), we can write 
\begin{align}
  \begin{split}
  \widetilde{f}(\widetilde{x}_s) &= \bE^{\xi}\big[f(\widetilde{x}_s,
  \xi^{\widetilde{x}_s}_{t})\big]\,, \\
\nabla_x\widetilde{f}(\widetilde{x}_s) \widetilde{x}_{s, x_i} &= \bE^{\xi}\big[\nabla_{x}f(\widetilde{x}_s,
\xi^{\widetilde{x}_s}_{t}) + \nabla_{y}f(\widetilde{x}_s, \xi^{\widetilde{x}_s}_{t})
\xi^{\widetilde{x}_s}_{t,x}\big]\widetilde{x}_{s,x_i}, 
\end{split}
\label{chain-rule-wilde-f}
\end{align}
where $\xi^x_t$ is the stationary process defined in
Appendix~\ref{app-2}, $\xi^x_{t,x}$ is the derivative process of $\xi^x_t$ with
respect to $x \in \mathbb{R}^k$, and $\bE^{\xi}$ denotes the expectation \wrt the stationary
process.  We have  
\begin{lemma}
  Let $\Delta = \epsilon^{\frac{1}{2}}$ and Assumptions
  \ref{assumption-1}--\ref{assumption-3} be satisfied. Then there exists
  $C>0$, independent of $\epsilon$ and can be chosen uniformly for $x_0$,
  $y_0$ which are contained in some bounded domain of $\mathbb{R}^k \times \mathbb{R}^l$, such that 
\[
\max_{0\le s \le T} \bE|\hat{x}_{s,x_i} - \widetilde{x}_{s,x_i}|^2 \le
C\epsilon^{\frac{1}{4}}\,, \quad 1 \le i \le k\,.
\]
\label{auxiliary-limit-dx}
\end{lemma}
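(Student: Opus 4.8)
The plan is to run the Gronwall argument of Lemma~\ref{average-auxiliary-xy} and Lemma~\ref{lemma-hatx-tildex} once more, now for the slow derivative processes. Apply It\^o's formula to $|\hat{x}_{s,x_i}-\widetilde{x}_{s,x_i}|^2$ and take expectations. Using (\ref{auxiliary-derivative}), the chain-rule identity (\ref{chain-rule-wilde-f}) for $\nabla_x\widetilde{f}(\widetilde{x}_s)\widetilde{x}_{s,x_i}$, and Assumption~\ref{assumption-3} (so $\alpha_1$ is a function of $x$ only), the right-hand side splits into a drift part comparing $\nabla_x f(x_{j\Delta},\hat{y}_s)\,x_{j\Delta,x_i}+\nabla_y f(x_{j\Delta},\hat{y}_s)\,\hat{y}_{s,x_i}$ with $\bE^{\xi}[\nabla_x f+\nabla_y f\,\xi_{t,x}](\widetilde{x}_s)\,\widetilde{x}_{s,x_i}$, plus a diffusion part $\bE\|\nabla_x\alpha_1(x_s)x_{s,x_i}-\nabla_x\alpha_1(\widetilde{x}_s)\widetilde{x}_{s,x_i}\|^2$, with $j=\floor{s/\Delta}$. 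The goal is to bound everything by $C\epsilon^{1/4}+C\int_0^s\bE|\hat{x}_{r,x_i}-\widetilde{x}_{r,x_i}|^2\,dr$ and close by Gronwall; since there is no $\epsilon^{-1}$ in the equation for $\hat{x}_{s,x_i}$, all the singular behaviour sits inside the fast sensitivity $\hat{y}_{s,x_i}$ and has to be absorbed through ergodicity of the frozen fast system.

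\textbf{Diffusion and Lipschitz contributions.}
For the diffusion term split $\nabla_x\alpha_1(x_s)x_{s,x_i}-\nabla_x\alpha_1(\widetilde{x}_s)\widetilde{x}_{s,x_i}$ as $(\nabla_x\alpha_1(x_s)-\nabla_x\alpha_1(\widetilde{x}_s))x_{s,x_i}+\nabla_x\alpha_1(\widetilde{x}_s)(x_{s,x_i}-\widetilde{x}_{s,x_i})$, and use the Lipschitz bound of Assumption~\ref{assumption-1}, the fourth-moment bounds of Lemma~\ref{d-xi-4} and Remark~\ref{rmk-2}, the decomposition $x_{s,x_i}-\widetilde{x}_{s,x_i}=(x_{s,x_i}-\hat{x}_{s,x_i})+(\hat{x}_{s,x_i}-\widetilde{x}_{s,x_i})$ with $\bE|x_{s,x_i}-\hat{x}_{s,x_i}|^2\le C\Delta$ from Lemma~\ref{average-auxiliary-xy}, and $\bE|x_s-\widetilde{x}_s|^2\le(\bE|x_s-\widetilde{x}_s|^4)^{1/2}\le C\epsilon^{1/4}$ from Theorem~\ref{thm-average-limit-xs}. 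With $\Delta=\epsilon^{1/2}$ each such term is either an $O(\epsilon^{1/4})$ remainder or folds into the Gronwall integral; the exponent $\epsilon^{1/4}$ is inherited precisely from the bound $\bE|x_s-\widetilde{x}_s|^2\le C\epsilon^{1/4}$. The drift pieces in which $\widetilde{x}_s$ is replaced by $x_{j\Delta}$ and $\widetilde{x}_{s,x_i},\hat{x}_{s,x_i}$ by their values at $j\Delta$ are handled by Lipschitz continuity together with the block-increment bounds $\bE|x_s-x_{j\Delta}|^4\le C\Delta^2$, $\bE|x_{s,x_i}-x_{j\Delta,x_i}|^4\le C\Delta^2$ of Lemma~\ref{x-diff-s-k} (and their analogues for $\hat{x}_{s,x_i},\widetilde{x}_{s,x_i}$), H\"older and Young's inequality (\ref{hodler-young-ineq}).

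\textbf{The averaging term (main obstacle).}
After these reductions the essential remaining quantity is, on each block $[j\Delta,(j+1)\Delta)$, an expression of the form $\bE\langle \bE_{j\Delta}[\nabla_x f(x_{j\Delta},\hat{y}_r)x_{j\Delta,x_i}+\nabla_y f(x_{j\Delta},\hat{y}_r)\hat{y}_{r,x_i}]-\bE^{\xi}[\nabla_x f(x_{j\Delta},\xi^{x_{j\Delta}}_t)+\nabla_y f(x_{j\Delta},\xi^{x_{j\Delta}}_t)\xi^{x_{j\Delta}}_{t,x}]\,x_{j\Delta,x_i},\ \hat{x}_{j\Delta,x_i}-\widetilde{x}_{j\Delta,x_i}\rangle$, where $\bE_{j\Delta}$ denotes conditioning on $\mathcal{F}_{j\Delta}$. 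The key input here is an exponential-ergodicity estimate for the frozen fast system \emph{together with its sensitivity to the slow variable}: conditionally on $\mathcal{F}_{j\Delta}$, $\hat{y}_{r,x_i}$ equals the frozen-fast $x$-sensitivity process (driven by the source $\epsilon^{-1}\nabla_x g(x_{j\Delta},\hat{y}_u)x_{j\Delta,x_i}$) plus a transient carrying $\hat{y}_{j\Delta,x_i}$ that decays like $e^{-\lambda(r-j\Delta)/\epsilon}$ by (\ref{mixing}); hence, in the spirit of Lemma~\ref{exp-converge}, the conditional expectation above converges to $\bE^{\xi}[\dots]\,x_{j\Delta,x_i}$ with error at most $C(1+|x_{j\Delta}|+|\hat{y}_{j\Delta}|+|x_{j\Delta,x_i}|+|\hat{y}_{j\Delta,x_i}|)\,e^{-\lambda(r-j\Delta)/\epsilon}$. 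Establishing this joint statement for $(\hat{y},\hat{y}_{,x})$, rather than for $\hat{y}$ alone, is the part that requires the most care: it rests on the dissipativity inequality (\ref{mixing}) applied to the linearised fast dynamics and on uniform fourth-moment bounds for $\hat{y}_{j\Delta}$ and $\hat{y}_{j\Delta,x_i}$ obtained as in Lemma~\ref{d-xi-4} and Remark~\ref{rmk-2}. Granting it, Cauchy--Schwarz in the pairing with $\hat{x}_{j\Delta,x_i}-\widetilde{x}_{j\Delta,x_i}$, summation of $\int_{j\Delta}^{(j+1)\Delta}e^{-\lambda(r-j\Delta)/\epsilon}dr\le\epsilon/\lambda$ over the $T/\Delta$ blocks, discrete H\"older in $j$, and Young's inequality yield a contribution of order $\epsilon^2/(\lambda^2\Delta^2)=C\epsilon$ plus a multiple of $\int_0^s\bE|\hat{x}_{r,x_i}-\widetilde{x}_{r,x_i}|^2dr$, so the averaging term is in fact asymptotically smaller than the $\epsilon^{1/4}$ produced by the Lipschitz reductions.

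\textbf{Conclusion.}
Collecting all contributions gives $\bE|\hat{x}_{s,x_i}-\widetilde{x}_{s,x_i}|^2\le C\epsilon^{1/4}+C\int_0^s\bE|\hat{x}_{r,x_i}-\widetilde{x}_{r,x_i}|^2dr$ for $0\le s\le T$, and Gronwall's inequality then yields $\max_{0\le s\le T}\bE|\hat{x}_{s,x_i}-\widetilde{x}_{s,x_i}|^2\le C\epsilon^{1/4}e^{CT}$. The choice $\Delta=\epsilon^{1/2}$ is the one that balances the discretisation error $O(\Delta)$ against the ergodic error $O(\epsilon/\Delta)$ (exactly as in Lemma~\ref{lemma-hatx-tildex} and Lemma~\ref{average-auxiliary-xy}), which is why the same $\Delta$ is used here.
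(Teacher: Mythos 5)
Your argument follows essentially the same route as the paper: It\^{o}'s formula on $|\hat{x}_{s,x_i}-\widetilde{x}_{s,x_i}|^2$, decomposition of the drift via the chain-rule identity (\ref{chain-rule-wilde-f}), identification of $(\hat y_r,\hat y_{r,x_i})$ on each block with the frozen-fast process $\xi^{x_{j\Delta}}_{j\Delta,r}$ and its sensitivity $\xi^{x_{j\Delta}}_{j\Delta,r,x}x_{j\Delta,x_i}$, the joint ergodicity estimate of Lemma~\ref{exp-converge}, the moment controls of Theorem~\ref{thm-average-limit-xs}, Lemma~\ref{lemma-hatx-tildex} and Lemma~\ref{average-auxiliary-xy}, and a closing Gronwall step with $\Delta=\epsilon^{1/2}$. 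One small quantitative slip: after integrating $e^{-\lambda(r-j\Delta)/\epsilon}$ over each block (which gives $\le\epsilon/\lambda$) and summing over the $\sim T/\Delta$ blocks, the pure averaging residual is of order $\epsilon/(\lambda\Delta)=\epsilon^{1/2}$, not $\epsilon^2/(\lambda^2\Delta^2)=\epsilon$ as you state; this does not affect the conclusion, since $\epsilon^{1/2}$ is still dominated by the $\epsilon^{1/4}$ residual coming from $\Delta^{1/2}$ and from $\int_0^s\big(\bE|\hat x_r-\widetilde x_r|^4\big)^{1/2}dr$.
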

\begin{proof}
  Let $j = \floor{\frac{r}{\Delta}}$. By Ito's formula and equality
  (\ref{chain-rule-wilde-f}), we have 
\begin{align}
  & \bE|\hat{x}_{s,x_i} - \widetilde{x}_{s,x_i}|^2 \notag \\
  =& 
  2\int_{0}^s \bE\big\langle \nabla_{x} f(x_{j\Delta},
  \hat{y}_r)\, x_{j\Delta,x_i} + \nabla_{y}
  f(x_{j\Delta}, \hat{y}_r)\, \hat{y}_{r, x_i}  - \nabla_{x}\widetilde{f}(\widetilde{x}_r)
  \widetilde{x}_{r, x_i}, \hat{x}_{r,x_i} - \widetilde{x}_{r,x_i}\big\rangle\, dr \notag \\
  &+ \int_{0}^s \bE\big\|\nabla_{x} \alpha_1(x_r)\, x_{r,x_i} -
  \nabla_{x}\alpha_1(\widetilde{x}_r)\widetilde{x}_{r, x_i}) \big\|^2 \,dr \notag \\
  = &
  2\int_{0}^s \bE\big\langle \nabla_{x} f(x_{j\Delta},
  \hat{y}_r)\, x_{j\Delta,x_i} 
  - \bE^{\xi}\big(\nabla_{x}f(\widetilde{x}_r, \xi^{\widetilde{x}_r}_{t})\big) \widetilde{x}_{r,x_i}
  , \hat{x}_{r,x_i} - \widetilde{x}_{r,x_i}\big\rangle dr \notag \\ 
  &+ 2\int_{0}^s \bE\big\langle \nabla_{y} f(x_{j\Delta}, \hat{y}_r)\, \hat{y}_{r, x_i} - 
  \bE^{\xi}\big(\nabla_{y}f(\widetilde{x}_r, \xi^{\widetilde{x}_r}_{t})
 \xi^{\widetilde{x}_r}_{t,x}\big)\widetilde{x}_{r,x_i},  \hat{x}_{r,x_i} -
 \widetilde{x}_{r,x_i}\big\rangle dr \notag \\ 
  &+ \int_{0}^s \bE\big\|\nabla_x \alpha_1(x_r)\, x_{r,x_i} -
  \nabla_x\alpha_1(\widetilde{x}_r)\widetilde{x}_{r, x_i}) \big\|^2 dr \notag \\
  =&I_1 + I_2 + I_3\,. \notag
\end{align}
Using the notations in Appendix~\ref{app-2}, we can identify the process
$\hat{y}_r$ with $\xi^{x_{j\Delta}}_{j\Delta, r}$  and process
$\hat{y}_{r,x_i}$ with $ \xi^{x_{j\Delta}}_{j\Delta, r, x} x_{j\Delta, x_i}$. Then, the term $I_1$ on the
right hand side above can be recast as   
\begin{align}
  &\int_{0}^s \bE\big\langle \nabla_{x} f(x_{j\Delta}, \hat{y}_r)\, x_{j\Delta, x_i} - 
  \bE^{\xi}\big(\nabla_{x}f(\widetilde{x}_r, \xi^{\widetilde{x}_r}_{t})
  \big)\widetilde{x}_{r,x_i},  \hat{x}_{r,x_i} - \widetilde{x}_{r,x_i}\big\rangle \,dr \notag \\ 
 =& \int_{0}^s \bE\big\langle \nabla_{x} f(x_{j\Delta}, \xi^{x_{j\Delta}}_{j\Delta, r})x_{j\Delta, x_i}- 
 \bE^{\xi}\big(\nabla_{x}f(x_{j\Delta}, \xi^{x_{j\Delta}}_{t})
 \big)x_{j\Delta,x_i}, \hat{x}_{r,x_i} - \widetilde{x}_{r,x_i}\big\rangle dr \notag \\ 
 & +  \int_{0}^s \bE\big\langle \bE^{\xi}\big(\nabla_{x}f(\hat{x}_{r},
 \xi^{\hat{x}_{r}}_{t})\big)  
 (\hat{x}_{r,x_i} - \widetilde{x}_{r,x_i}), \hat{x}_{r,x_i} -
 \widetilde{x}_{r,x_i}\big\rangle dr \notag \\ 
 &+  \int_{0}^s \bE\big\langle \big[\bE^{\xi}\big(\nabla_{x}f(\hat{x}_{r},
 \xi^{\hat{x}_{r}}_{t}) \big) - \bE^{\xi}\big(\nabla_{x}f(\widetilde{x}_r, \xi^{\widetilde{x}_r}_{t})
 \big)\big]\widetilde{x}_{r,x_i},  \hat{x}_{r,x_i} - \widetilde{x}_{r,x_i}\big\rangle dr \notag \\ 
 & + \int_{0}^s \bE\big\langle
 \bE^{\xi}\big(\nabla_{x}f(x_{j\Delta}, \xi^{x_{j\Delta}}_{t})
 \big)x_{j\Delta,x_i} - \bE^{\xi}\big(\nabla_{x}f(\hat{x}_{r},
 \xi^{\hat{x}_{r}}_{t}) \big)\hat{x}_{r,x_i}  , \hat{x}_{r,x_i} -
 \widetilde{x}_{r,x_i}\big\rangle dr \notag \\ 
 =& I_{1,1} + I_{1,2} + I_{1,3} + I_{1,4} \,.\notag
\end{align}
For $I_{1,1}$, using Lemma~\ref{exp-converge} in Appendix~\ref{app-2}
 and Lemma~\ref{x-diff-s-k}, we have 
\begin{align}
  |I_{1,1}| \le & 
  \Big|\int_{0}^s \bE\big\langle \nabla_{x} f(x_{j\Delta}, \xi^{x_{j\Delta}}_{j\Delta, r})x_{j\Delta, x_i}- 
 \bE^{\xi}\big(\nabla_{x}f(x_{j\Delta}, \xi^{x_{j\Delta}}_{t})
 \big)x_{j\Delta,x_i},
 \hat{x}_{j\Delta,x_i} - \widetilde{x}_{j\Delta,x_i}\big\rangle dr\Big| \notag \\ 
 &+ 
  \Big|\int_{0}^s \bE\big\langle \nabla_{x} f(x_{j\Delta}, \xi^{x_{j\Delta}}_{j\Delta, r})x_{j\Delta, x_i}- 
 \bE^{\xi}\big(\nabla_{x}f(x_{j\Delta}, \xi^{x_{j\Delta}}_{t})
 \big)x_{j\Delta,x_i},
 \hat{x}_{r,x_i} - \hat{x}_{j\Delta,x_i} \big\rangle dr\Big| \notag \\ 
 &+ \Big|\int_{0}^s \bE\big\langle \nabla_{x} f(x_{j\Delta}, \xi^{x_{j\Delta}}_{j\Delta, r})x_{j\Delta, x_i}- 
 \bE^{\xi}\big(\nabla_{x}f(x_{j\Delta}, \xi^{x_{j\Delta}}_{t})
 \big)x_{j\Delta,x_i},
 \widetilde{x}_{r,x_i} - \widetilde{x}_{j\Delta,x_i} \big\rangle dr\Big| \notag \\ 
 \le & 
 C\sum_{j=0}^{\floor{s / \Delta}} \int_{j\Delta}^{[(j+1)\Delta]\wedge s}
 \bE\Big( \big(1+|x_{j\Delta}| + |\hat{y}_{j\Delta}|\big)
 \big|x_{j\Delta, x_i}\big|\big|\hat{x}_{j\Delta,x_i} -
 \widetilde{x}_{j\Delta,x_i}\big|\Big) e^{-\frac{\lambda(r-j\Delta)}{\epsilon}} dr +
 Cs \Delta^{\frac{1}{2}} \notag \\
 \le & \frac{C\epsilon}{\lambda} \sum_{j=0}^{\floor{s / \Delta}} 
 \Big[
 \bE\big(1+|x_{j\Delta}| + |\hat{y}_{j\Delta}|\big)^4 + \bE\big|x_{j\Delta,
 x_i}\big|^4  + \bE\big|\hat{x}_{j\Delta,x_i} -
 \widetilde{x}_{j\Delta,x_i}\big|^2\Big] +
 Cs \Delta^{\frac{1}{2}} \notag \\
 \le & \frac{C\epsilon}{\lambda} \sum_{j=0}^{\floor{s / \Delta}} \bE|\hat{x}_{j\Delta,x_i} -
 \widetilde{x}_{j\Delta,x_i}|^2 + Cs(\Delta^{\frac{1}{2}} +
 \frac{\epsilon}{\Delta})
 \le \frac{C\epsilon}{\lambda \Delta} \int_{0}^s \bE|\hat{x}_{r,x_i} -
 \widetilde{x}_{r,x_i}|^2 dr + Cs(\Delta^{\frac{1}{2}} +
 \frac{\epsilon}{\Delta})\,, \notag 
\end{align}
where the $4$th order estimates in Lemma~\ref{lemma-4th-stability},
Lemma~\ref{d-xi-4},
 as well as Remark~\ref{rmk-2} are used in the last
two inequalities.
For $I_{1,2}$, since function $f$ is Lipschitz, it follows that 
\begin{align}
  |I_{1,2}| \le 
  C\int_{0}^s \bE|\hat{x}_{r,x_i} - \widetilde{x}_{r,x_i}|^2 dr\,. \notag 
\end{align}
For $I_{1,3}$, Lemma~\ref{lemma-stationary} implies that  
\begin{align}
   \Big|\bE^{\xi}\big(\nabla_{x}f(\hat{x}_{r}, \xi^{\hat{x}_{r}}_{t})\big) - \bE^{\xi}\big(\nabla_{x}f(\widetilde{x}_r, \xi^{\widetilde{x}_r}_{t}) \big)\Big| 
\le  
  C\bE^{\xi}\big(|\hat{x}_{r}-\widetilde{x}_r| +  |\xi^{\hat{x}_{r}}_{t} - \xi^{\widetilde{x}_r}_{t}|\big) 
\le & C|\hat{x}_{r} - \widetilde{x}_{r}|\,, \notag
\end{align}
and therefore using inequality (\ref{hodler-young-ineq}), 
\begin{align}
  |I_{1,3}|
  \le & C\int_{0}^s \bE\big(|\hat{x}_{r} - \widetilde{x}_{r}|\,|\widetilde{x}_{r,x_i}|\,|\hat{x}_{r,x_i} -
  \widetilde{x}_{r,x_i}|\big)\,dr \notag \\ 
  \le & C \int_{0}^s \bE|\hat{x}_{r,x_i} - \widetilde{x}_{r,x_i}|^2 dr +  
  C\int_{0}^s \big(\bE|\hat{x}_{r} -
  \widetilde{x}_{r}|^4\big)^{\frac{1}{2}}
  \big(\bE|\widetilde{x}_{r,x_i}|^4\big)^{\frac{1}{2}} dr \notag \\ 
  \le & C \int_{0}^s \bE|\hat{x}_{r,x_i} - \widetilde{x}_{r,x_i}|^2 dr + 
  C\int_{0}^s \big(\bE|\hat{x}_{r} -
  \widetilde{x}_{r}|^4\big)^{\frac{1}{2}} dr\,. \notag 
  \notag
\end{align}
The remaining term $I_{1,4}$ can be estimated in pretty much the same way as $I_{1,2}$ and $I_{1,3}$: 
\begin{align}
  |I_{1,4}| 
  \le & C\int_{0}^s \bE\Big(|x_{j\Delta ,x_i} - \hat{x}_{r,x_i}| |\hat{x}_{r,x_i} - \widetilde{x}_{r,x_i}|\Big)dr 
   + C\int_{0}^s \bE\Big(|x_{j\Delta} - \hat{x}_{r}|\,|\hat{x}_{r,x_i}|\,|\hat{x}_{r,x_i} -
  \widetilde{x}_{r,x_i}|\Big)dr \notag \\ 
  \le & C \int_{0}^s \bE|\hat{x}_{r,x_i} - \widetilde{x}_{r,x_i}|^2 dr +
C \int_{0}^s \bE|x_{j\Delta ,x_i} - \hat{x}_{r,x_i}|^2 dr + 
C \int_{0}^s 
 \bE\Big(|x_{j\Delta} - \hat{x}_{r}|^2\,|\hat{x}_{r,x_i}|^2\Big) dr \notag \\
  \le & C \int_{0}^s \bE|\hat{x}_{r,x_i} - \widetilde{x}_{r,x_i}|^2 dr +
C \int_{0}^s \bE|x_{j\Delta ,x_i} - \hat{x}_{r,x_i}|^2 dr + 
C \int_{0}^s \big(\bE|x_{j\Delta} - \hat{x}_{r}|^4\big)^{\frac{1}{2}} dr \notag \\
  \le & C \int_{0}^s \bE|\hat{x}_{r,x_i} - \widetilde{x}_{r,x_i}|^2 dr +
C \int_{0}^s \bE|x_{j\Delta ,x_i} - x_{r,x_i}|^2 dr + 
C \int_{0}^s \bE|x_{r,x_i} - \hat{x}_{r,x_i}|^2 dr \notag \\
& + C \int_{0}^s \big(\bE|x_{j\Delta} - x_{r}|^4\big)^{\frac{1}{2}} dr 
+ C \int_{0}^s \big(\bE|x_{r}-\hat{x}_r|^4\big)^{\frac{1}{2}} dr \notag \\
  \le & C \int_{0}^s \bE|\hat{x}_{r,x_i} - \widetilde{x}_{r,x_i}|^2 dr + Cs\Delta\,, \notag
\end{align}
where the last inequality follows from Lemma~\ref{x-diff-s-k}, Lemma~\ref{auxiliary-error}
and Lemma~\ref{average-auxiliary-xy}. 

We proceed with $I_2$. Similarly as $I_1$, we have 
\begin{align}
  &\int_{0}^s \bE\langle \nabla_{y} f(x_{j\Delta}, \hat{y}_r)\, \hat{y}_{r, x_i} - 
  \bE^{\xi}\big(\nabla_{y}f(\widetilde{x}_r, \xi^{\widetilde{x}_r}_{t})
 \xi^{\widetilde{x}_r}_{t,x}\big)\widetilde{x}_{r,x_i},  \hat{x}_{r,x_i} - \widetilde{x}_{r,x_i}\rangle dr \notag \\ 
 =& \int_{0}^s \bE\langle \nabla_{y} f(x_{j\Delta}, \xi^{x_{j\Delta}}_{j\Delta, r})\xi^{x_{j\Delta}}_{j\Delta, r, x}x_{j\Delta, x_i}- 
 \bE^{\xi}\big(\nabla_{y}f(x_{j\Delta}, \xi^{x_{j\Delta}}_{t})
 \xi^{x_{j\Delta}}_{t,x}\big)x_{j\Delta,x_i}, \hat{x}_{r,x_i} - \widetilde{x}_{r,x_i}\rangle dr \notag \\ 
 & +  \int_{0}^s \bE\langle \bE^{\xi}\big(\nabla_{y}f(\hat{x}_{r},
 \xi^{\hat{x}_{r}}_{t}) \xi^{\hat{x}_{r}}_{t,x}\big)  
 (\hat{x}_{r,x_i} - \widetilde{x}_{r,x_i}), \hat{x}_{r,x_i} - \widetilde{x}_{r,x_i}\rangle dr \notag \\ 
 &+  \int_{0}^s \bE\langle \big[\bE^{\xi}\big(\nabla_{y}f(\hat{x}_{r},
 \xi^{\hat{x}_{r}}_{t}) \xi^{\hat{x}_{r}}_{t,x}\big) - \bE^{\xi}\big(\nabla_{y}f(\widetilde{x}_r, \xi^{\widetilde{x}_r}_{t})
 \xi^{\widetilde{x}_r}_{t,x}\big)\big]\widetilde{x}_{r,x_i},  \hat{x}_{r,x_i} - \widetilde{x}_{r,x_i}\rangle dr \notag \\ 
 & + \int_{0}^s \bE\langle
 \bE^{\xi}\big(\nabla_{y}f(x_{j\Delta}, \xi^{x_{j\Delta}}_{t})
 \xi^{x_{j\Delta}}_{t,x}\big)x_{j\Delta,x_i} - \bE^{\xi}\big(\nabla_{y}f(\hat{x}_{r},
 \xi^{\hat{x}_{r}}_{t}) \xi^{\hat{x}_{r}}_{t,x}\big)\hat{x}_{r,x_i}  , \hat{x}_{r,x_i} -
 \widetilde{x}_{r,x_i}\rangle dr \notag \\ 
 =& I_{2,1} + I_{2,2} + I_{2,3} + I_{2,4}\,.
\end{align}
Using Lemma~\ref{exp-converge} and Lemma~\ref{lemma-stationary}, we can
estimate the above four terms similarly as terms $I_{1,1}$ to $I_{1,4}$, and obtain 
\begin{align}
  I_{2,1} \le& \frac{C\epsilon}{\lambda \Delta} \int_{0}^s \bE|\hat{x}_{r,x_i} -
  \widetilde{x}_{r,x_i}|^2 dr + Cs(\Delta^{\frac{1}{2}} +
  \frac{\epsilon}{\Delta})\,, \notag
 \\
 I_{2,2} \le& C\int_{0}^s \bE|\hat{x}_{r,x_i} - \widetilde{x}_{r,x_i}|^2 dr\,,  \notag  \\
 I_{2,3} \le & C \int_{0}^s \bE|\hat{x}_{r,x_i} - \widetilde{x}_{r,x_i}|^2 dr + 
  C\int_{0}^s \big(\bE|\hat{x}_{r} -
  \widetilde{x}_{r}|^4\big)^{\frac{1}{2}} dr\,, \notag \\
 I_{2,4} \le & C \int_{0}^s \bE|\hat{x}_{r,x_i} - \widetilde{x}_{r,x_i}|^2 dr + Cs\Delta\,. \notag
\end{align}

For $I_3$, Lemma~\ref{d-xi-4}, Lemma~\ref{auxiliary-error} and the assumption that $\alpha_1$ is Lipschitz entail 
\begin{align}
  |I_3| \le & 
  3\int_{0}^s \bE\|(\nabla_{x} \alpha_1(x_r) -
  \nabla_{x}\alpha_1(\widetilde{x}_r)) x_{r, x_i} \|^2 dr 
  + 3\int_{0}^s \bE\|\nabla_{x} \alpha_1(\widetilde{x}_r)(x_{r,x_i} - \hat{x}_{r, x_i})
  \|^2 dr \notag \\
 & +
  3 \int_{0}^s \bE\|\nabla_{x} \alpha_1(\widetilde{x}_r)(\hat{x}_{r,x_i} -\widetilde{x}_{r, x_i}) \|^2 dr \notag \\
  \le & C\int_{0}^s \bE\big(|x_r - \widetilde{x}_r|^2|x_{r, x_i}|^2\big) dr 
  + C\int_{0}^s \bE|x_{r,x_i} - \hat{x}_{r, x_i}|^2 dr 
  + C\int_{0}^s \bE|\hat{x}_{r,x_i} -\widetilde{x}_{r, x_i}|^2 dr \notag \\
  \le & 
  C\int_{0}^s \bE|\hat{x}_{r,x_i} -\widetilde{x}_{r, x_i}|^2 dr +  
  C \int_{0}^s \big(\bE|x_r -\widetilde{x}_r|^4\big)^{\frac{1}{2}} \big(\bE|x_{r,
  x_i}|^4\big)^{\frac{1}{2}} dr + Cs\Delta \notag \\
  \le & 
  C\int_{0}^s \bE|\hat{x}_{r,x_i} -\widetilde{x}_{r, x_i}|^2 dr +  
  C \int_{0}^s \big(\bE|\hat{x}_r -\widetilde{x}_r|^4\big)^{\frac{1}{2}} dr + 
  Cs\Delta\,. \notag
\end{align}
Upon combining the bounds for $I_1$, $I_2$ and $I_3$, we conclude that 
\begin{align*}
  \bE|\hat{x}_{s,x_i} - \widetilde{x}_{s,x_i}|^2 
  \le &  
  C(1 + \frac{\epsilon}{\lambda\Delta}) \int_{0}^s \bE|\hat{x}_{r,x_i} -\widetilde{x}_{r, x_i}|^2 dr \\
  & +  
  C \int_{0}^s \big(\bE|\hat{x}_r -\widetilde{x}_r|^4\big)^{\frac{1}{2}} dr + 
  Cs(\Delta + \Delta^{\frac{1}{2}} + \frac{\epsilon}{\Delta})\,.
\end{align*}
Now letting $\Delta = \epsilon^{\frac{1}{2}}$ and using Lemma~\ref{lemma-hatx-tildex},  it follows that 
\begin{align}
  \bE|\hat{x}_{s,x_i} - \widetilde{x}_{s,x_i}|^2 
  \le 
  C\int_{0}^s \bE|\hat{x}_{r,x_i} -\widetilde{x}_{r, x_i}|^2 dr +  
  Cs\epsilon^{\frac{1}{4}} \notag
\end{align}
and Gronwall's inequality yields the conclusion.
\qed
\end{proof}

Combining Lemma~\ref{average-auxiliary-xy} and Lemma~\ref{auxiliary-limit-dx}, we have proved:

\begin{theorem}
  Suppose that Assumptions \ref{assumption-1}--\ref{assumption-3} hold.
  Then there exists $C>0$, independent of $\epsilon$ and can be chosen
  uniformly for $x_0$, $y_0$ which are contained in some bounded domain
  of $\mathbb{R}^k \times \mathbb{R}^l$, such that 
    \begin{align}
      \max_{1 \le s \le T} \bE|x_{s,x_i} - \widetilde{x}_{s,x_i}|^2 \le C \epsilon^{\frac{1}{4}}\,. \notag
    \end{align}
  \label{thm-average-limit-xs_dx}
\end{theorem}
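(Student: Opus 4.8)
The plan is to use the time-discrete auxiliary derivative process $\hat{x}_{s,x_i}$ introduced in (\ref{auxiliary-derivative}) as an intermediary between the true variational process $x_{s,x_i}$ solving (\ref{1st-variation-x}) and the averaged variational process $\widetilde{x}_{s,x_i}$, in complete analogy with the way the auxiliary process (\ref{auxiliary}) was used to compare $x_s$ with $\widetilde{x}_s$ in Theorem~\ref{thm-average-limit-xs}. Starting from the elementary bound
\begin{align}
  \bE|x_{s,x_i} - \widetilde{x}_{s,x_i}|^2 \le 2\,\bE|x_{s,x_i} - \hat{x}_{s,x_i}|^2 + 2\,\bE|\hat{x}_{s,x_i} - \widetilde{x}_{s,x_i}|^2\,, \notag
\end{align}
it suffices to estimate the two terms on the right-hand side separately and uniformly over $s \in [0,T]$.

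For the first term I would simply invoke Lemma~\ref{average-auxiliary-xy}, which gives $\bE|x_{s,x_i} - \hat{x}_{s,x_i}|^2 \le C\Delta$. The second term is precisely the content of Lemma~\ref{auxiliary-limit-dx}, which, for the specific scaling $\Delta = \epsilon^{1/2}$, yields $\bE|\hat{x}_{s,x_i} - \widetilde{x}_{s,x_i}|^2 \le C\epsilon^{1/4}$. Fixing $\Delta = \epsilon^{1/2}$ from the outset and using $\epsilon^{1/2} \le \epsilon^{1/4}$ for $\epsilon \le 1$, the two contributions combine to the claimed bound $C\epsilon^{1/4}$; the uniformity of $C$ over bounded subsets of $(x_0,y_0)$ is inherited directly from both lemmas.

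The genuine difficulty is therefore not in this combination step but is buried in Lemma~\ref{auxiliary-limit-dx}. There the obstacle is to control $\hat{x}_{s,x_i} - \widetilde{x}_{s,x_i}$ while the fast derivative process $\hat{y}_{s,x_i}$ evolves on the $1/\epsilon$ time scale: one must exploit the chain-rule identity (\ref{chain-rule-wilde-f}) expressing $\nabla_x\widetilde{f}$ as a stationary average of $\nabla_x f + (\nabla_y f)\,\xi^{x}_{t,x}$, decompose the drift difference block by block on the partition $\{[j\Delta,(j+1)\Delta)\}$, apply the exponential ergodic estimate of Lemma~\ref{exp-converge} to extract the freezing error of order $\epsilon/\Delta$, and absorb the remaining $1/\epsilon$-scaled terms via the dissipativity inequality (\ref{mixing}) and Gronwall's lemma, using the fourth-moment bounds of Lemma~\ref{d-xi-4} and Lemma~\ref{lemma-4th-stability} together with the increment estimates of Lemma~\ref{x-diff-s-k}, Lemma~\ref{auxiliary-error} and Lemma~\ref{average-auxiliary-xy} along the way. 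The resulting Gronwall remainder has the shape $\Delta + \Delta^{1/2} + \epsilon/\Delta$, and its minimization over $\Delta$ is exactly what forces $\Delta = \epsilon^{1/2}$ and caps the final rate at $\epsilon^{1/4}$ rather than $\epsilon^{1/2}$.
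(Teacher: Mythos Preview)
Your proposal is correct and follows exactly the paper's approach: the paper's proof of this theorem is simply the one-line statement ``Combining Lemma~\ref{average-auxiliary-xy} and Lemma~\ref{auxiliary-limit-dx}, we have proved'', which is precisely your triangle-inequality split together with the choice $\Delta = \epsilon^{1/2}$. Your additional commentary on where the real work sits (inside Lemma~\ref{auxiliary-limit-dx}) is also accurate.
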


\section{Conclusions}
\label{sec-discuss}
Importance sampling is a widely used variance reduction technique 
for the design of efficient Monte Carlo estimators. A crucial point in order to achieve substantial variance reduction is 
a clever (and careful) change of measure. In the diffusion process setting, this change of measure can be 
realized by adding a control force to the original system, where the optimal control that leads to a  \emph{zero-variance 
estimator} is related to a Hamilton-Jacobi-Bellman (HJB) equation that may not be easily solvable, e.g.~when the state space is high-dimensional.   

Our starting point is that even it may not be possible to compute the \emph{optimal} control, 
it is possible to approximate it in such a way that the resulting estimators remain efficient.
In the case of exponential type expectations and for multiscale diffusions with 
both slow and fast variables, the asymptotic optimality of the approximation 
based on a low-dimensional averaged equation has been proved and an upper bound
for the relative error of the importance sampling estimator has been obtained.
We expect our results to be helpful for the design of importance sampling methods
as well as for the study of multiscale diffusion processes.

%
%

There are many possible extensions related to the current work. 
For the theoretical aspects, our main result concerns the time scale separation limit ($\epsilon
\rightarrow 0$) for diffusion with
slow and fast variables and assumes the temperature $\beta$ is fixed. As a result, the constant 
in Theorem~\ref{main_thm} may depend on $\beta$.
It is interesting to consider asymptotics for both parameters
$\epsilon,\beta$ together.
Generalizing our results to dynamics with non-Lipschitz coefficients as well as to more
general types of dynamics is also important. 
For the numerical aspects, 
realistic systems in climate science, molecular dynamics may be
high-dimensional and even the averaged equation cannot be easily discretized 
and solved by usual grid-based methods. In more general situations, 
it may be impossible to separate systems' states into slow and fast ones
with an explicit time scale separation parameter. 
We leave these questions for future work and refer to \cite{ce_paper2014,HaEtalJCD2014} for some 
recent algorithmic and methodological developments in this regard. 

\section*{Acknowledgements}
The authors acknowledge financial support by the DFG Research Center {\sc Matheon}, the Einstein Center for Mathematics {\sc ECMath} and the DFG-CRC 1114 ``Scaling Cascades in Complex
Systems''. Special thanks also go to anonymous referees whose valuable
comments and criticism have helped to improve this paper.


\appendix
\section{Two useful inequalities}
\label{app-1}
\begin{clm}
  \label{claim-2}
  Consider functions $x_1(t), x_2(t)$ on $t \in [0,T]$ satisfying 
  \begin{align*}
    \dot{x}_1(t) \le a_{11}\,x_1(t) + a_{12}\,x_2(t) \\
    \dot{x}_2(t) \le \frac{a_{21}}{\epsilon} x_1(t) - \frac{a_{22}}{\epsilon} x_2(t) 
  \end{align*}
  with $x_1(0) = 0, x_2(0) = 1$, $a_{ij} > 0$, $1\le i, j \le 2$.
  Further assume that $x_1(t) \ge 0$ for all $t\in [0,T]$. Then there is a constant $C > 0$ depending on $a_{ij}$ and $T$, such that  
  \begin{align}
    \max_{0\le s \le T} x_1(s) \le C\epsilon\,, \qquad x_2(t) \le
    e^{-\frac{a_{22} t}{\epsilon}} + C\epsilon\,, \quad t \in [0, T].
  \end{align}
\end{clm}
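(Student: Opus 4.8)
The plan is to prove Claim~\ref{claim-2} by exploiting the two--time--scale structure of the coupled differential inequalities: the variable $x_{2}$ relaxes on the $O(\epsilon)$ time scale and is only weakly forced by the small variable $x_{1}$, while $x_{1}$ starts from $0$ and is fed by $x_{2}$, which itself decays like $e^{-a_{22}t/\epsilon}$. It is enough to read the differential inequalities in their integrated (Duhamel) form, so no regularity beyond absolute continuity of $x_{1},x_{2}$ is needed.

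First I would treat the second inequality as a scalar linear differential inequality for $x_{2}$ with forcing $\tfrac{a_{21}}{\epsilon}x_{1}$. Multiplying by the integrating factor $e^{a_{22}t/\epsilon}>0$, integrating from $0$, and using $x_{2}(0)=1$ gives
\[
x_{2}(t)\le e^{-a_{22}t/\epsilon}+\frac{a_{21}}{\epsilon}\int_{0}^{t}e^{-a_{22}(t-s)/\epsilon}\,x_{1}(s)\,ds .
\]
Since $x_{1}\ge 0$, bounding $x_{1}(s)$ inside the integral by $m(t):=\max_{0\le s\le t}x_{1}(s)$ and using $\tfrac{a_{21}}{\epsilon}\int_{0}^{t}e^{-a_{22}(t-s)/\epsilon}\,ds\le \tfrac{a_{21}}{a_{22}}$ yields $x_{2}(t)\le e^{-a_{22}t/\epsilon}+\tfrac{a_{21}}{a_{22}}m(t)$; the essential point is that the forcing is now controlled by $x_{1}$ itself with an $\epsilon$-independent constant.

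Next I would substitute this into the first inequality. Using $x_{1}(0)=0$ and integrating, $x_{1}(t)\le\int_{0}^{t}\bigl(a_{11}x_{1}(s)+a_{12}x_{2}(s)\bigr)\,ds$, and the previous bound together with $\int_{0}^{t}a_{12}e^{-a_{22}s/\epsilon}\,ds\le \tfrac{a_{12}}{a_{22}}\epsilon$ gives
\[
x_{1}(t)\le \frac{a_{12}}{a_{22}}\,\epsilon+\Bigl(a_{11}+\frac{a_{12}a_{21}}{a_{22}}\Bigr)\int_{0}^{t}m(s)\,ds .
\]
The right-hand side is nondecreasing in $t$, so the same bound holds with $m(t)$ on the left, and Gronwall's lemma then gives $m(T)\le \tfrac{a_{12}}{a_{22}}\epsilon\,e^{(a_{11}+a_{12}a_{21}/a_{22})T}=:C\epsilon$, which is the first assertion. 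Feeding this back into $x_{2}(t)\le e^{-a_{22}t/\epsilon}+\tfrac{a_{21}}{a_{22}}m(t)$ yields the second assertion after enlarging $C$ to absorb the factor $a_{21}/a_{22}$. All constants depend only on the $a_{ij}$ and $T$, never on $\epsilon$.

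The argument is essentially routine ODE and Gronwall bookkeeping; the only point that needs a little care is that after inserting the Duhamel bound for $x_{2}$ one is left with the \emph{nonlocal} quantity $m(t)=\max_{[0,t]}x_{1}$ rather than $x_{1}(t)$, so Gronwall cannot be applied directly to $x_{1}$ and one must first pass to the monotone majorant $m$. One should also note that the hypothesis $x_{1}\ge 0$ is precisely what legitimises the crude bound on the forcing integral in the estimate for $x_{2}$, and that in the intended application ($x_{1}(t)=\mathbf E|x_{t,y_i}|^{2}$, $x_{2}(t)=\mathbf E|y_{t,y_i}|^{2}$) this nonnegativity is automatic.
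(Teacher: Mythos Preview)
Your proof is correct and follows essentially the same approach as the paper: apply the Duhamel/Gronwall bound to $x_{2}$ to get $x_{2}(t)\le e^{-a_{22}t/\epsilon}+\tfrac{a_{21}}{a_{22}}\max_{[0,t]}x_{1}$, feed this into the first inequality, pass to the monotone majorant $m(t)=\max_{[0,t]}x_{1}$, and close with Gronwall. The only cosmetic difference is that the paper absorbs the $a_{11}x_{1}$ term via the integrating factor $e^{a_{11}(t-s)}$ before inserting the bound on $x_{2}$, whereas you bound $x_{1}(s)\le m(s)$ directly; both routes are equivalent.
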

\begin{proof}
   Applying Gronwall's inequality to the equation of $x_2$, we have 
  \begin{align}
    x_2(t) &\le e^{-\frac{a_{22}t}{\epsilon}} + \int_0^t
    e^{-\frac{a_{22}}{\epsilon}(t-s)} \frac{a_{21}}{\epsilon} x_1(s) ds \notag \\
    &\le e^{-\frac{a_{22}t}{\epsilon}} + \frac{a_{21}}{a_{22}} \max_{0 \le s
  \le t} x_1(s)\,. \label{x1-x2}
  \end{align}
  Applying Gronwall's inequality to $x_1$ and using 
  (\ref{x1-x2}), we find 
  \begin{align}
    x_1(t) \le a_{12} \int_0^t e^{a_{11}(t-s)} \Big[e^{-\frac{a_{22}s}{\epsilon}}
    + \frac{a_{21}}{a_{22}} \big(\max_{0 \le r \le s} x_1(r)\big)\Big] ds\,.
  \end{align}
  Since the right hand side in the last inequality is monotonically increasing
  (as a function of $t$), it follows that 
  \begin{align}
    \max_{0\le s \le t} x_1(s) &\le a_{12} \int_0^t e^{a_{11}(t-s)} \Big[e^{-\frac{a_{22}s}{\epsilon}}
    + \frac{a_{21}}{a_{22}} \big(\max_{0 \le r \le s} x_1(r)\big)\Big] ds \notag \\
    & \le \frac{a_{12}}{a_{22}} e^{a_{11}T}\epsilon +
    \frac{a_{12}a_{21}}{a_{22}} \int_0^t e^{a_{11}(t-s)} \big(\max_{0\le r \le s} x_1(r)\big) ds\,.
  \end{align}
  The first part of the assertion then follows by applying Gronwall's inequality in integral form 
  to $\max\limits_{0\le s \le t} x_1(s)$, while the second part is obtained using
  (\ref{x1-x2}). 
  \qed
\end{proof}

For $0 < \epsilon < 1$, we set $t_1 = -\frac{2\epsilon\ln \epsilon}{\lambda} > 0$ 
and introduce the function $\gamma\colon [0,T] \rightarrow [0, 1]$ by 
\begin{align}
  \gamma(t) =\left\{
  \begin{array}{cl}
    1- \frac{t}{t_1} & \qquad 0 \le t \le t_1 	\\
    0 & \qquad t_1 < t \le T\,.
  \end{array}
  \right.
  \label{eta}
\end{align}

\begin{clm}
  \label{claim-3}
  Consider functions $x_1(t), x_2(t)$ on $t \in [0, T]$ satisfying  
  \begin{align*}
    \dot{x}_1(t) &\le a_1(1 + \epsilon^{-\gamma(t)}) x_1(t) + a_2\epsilon^{\gamma(t)} x_2(t) \\
    \dot{x}_2(t) &\le \frac{a_3x_1(t)}{\epsilon} -\frac{\lambda x_2(t)}{\epsilon} \,,
  \end{align*}
  where $\gamma$ is given in (\ref{eta}), $a_i \ge 0, 1\le i \le 3$, and $x_1(0) = 0, x_2(0) = 1$.
  Further assume that $x_1(t) \ge 0$ on $t \in [0,T]$.
  Then there is a constant $C > 0$ independent of $\epsilon$, such that  
  \begin{align}
    \max_{0\le s \le T} x_1(s) \le C\epsilon^2, \qquad x_2(t) \le
    e^{-\frac{\lambda t}{\epsilon}} + C\epsilon^2\,, \quad t \in [0,T]\,.
\end{align}
\end{clm}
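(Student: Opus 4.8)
The plan is to run the same two-variable Gronwall scheme as in Claim~\ref{claim-2}, but now tracking the initial layer $[0,t_1]$, on which the coefficient $\epsilon^{-\gamma(t)}$ of $x_1$ is as large as $\epsilon^{-1}$ while the source $\epsilon^{\gamma(t)}$ multiplying $x_2$ is correspondingly as small as $\epsilon$; the interpolating exponent $\gamma$ is what balances these two effects. First I would integrate the inequality for $x_2$ against the integrating factor $e^{\lambda t/\epsilon}$ and use $x_2(0)=1$ to get, with $m_1(t):=\max_{0\le s\le t}x_1(s)$,
\[
  x_2(t)\le e^{-\lambda t/\epsilon}+\frac{a_3}{\lambda}\,m_1(t),\qquad t\in[0,T].
\]
Substituting this into the inequality for $x_1$ reduces matters to the scalar inequality $\dot x_1(t)\le a_1(1+\epsilon^{-\gamma(t)})\,x_1(t)+a_2\epsilon^{\gamma(t)}\big(e^{-\lambda t/\epsilon}+\tfrac{a_3}{\lambda}m_1(t)\big)$, which I would solve by variation of constants with integrating factor $e^{A(t)}$, $A(t)=\int_0^t a_1(1+\epsilon^{-\gamma(s)})\,ds$.

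The proof then rests on two elementary integral estimates, both using the precise value $t_1=-2\epsilon\ln\epsilon/\lambda$. First, $A(T)\le C$ uniformly in $\epsilon$: on $[0,t_1]$ one has $\epsilon^{-\gamma(s)}=\epsilon^{-1}\epsilon^{s/t_1}$, hence $\int_0^{t_1}\epsilon^{-\gamma(s)}\,ds=\epsilon^{-1}\cdot\frac{t_1}{\ln\epsilon}(\epsilon-1)=\frac{2}{\lambda}(1-\epsilon)\le\frac{2}{\lambda}$; on $[t_1,T]$ the integrand is the constant $2a_1$; together with $t_1\to0$ this gives $A(T)\le C$, so $e^{A(t)-A(s)}\le C$ on the simplex $0\le s\le t\le T$. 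Second, $\int_0^T\epsilon^{\gamma(s)}e^{-\lambda s/\epsilon}\,ds\le C\epsilon^2$: on $[0,t_1]$ one has $\epsilon^{\gamma(s)}=\epsilon\,\epsilon^{-s/t_1}=\epsilon\,e^{\lambda s/(2\epsilon)}$, so $\epsilon^{\gamma(s)}e^{-\lambda s/\epsilon}=\epsilon\,e^{-\lambda s/(2\epsilon)}$ and the integral is at most $2\epsilon^2/\lambda$; on $[t_1,T]$, $\epsilon^{\gamma(s)}=1$ and $\int_{t_1}^\infty e^{-\lambda s/\epsilon}\,ds=\frac{\epsilon}{\lambda}e^{-\lambda t_1/\epsilon}=\frac{\epsilon^3}{\lambda}$. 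Finally, using $\epsilon^{\gamma(s)}\le1$ for the $m_1$-term, the variation-of-constants formula yields $x_1(t)\le C_1\epsilon^2+C_2\int_0^t m_1(s)\,ds$; since the right-hand side is nondecreasing in $t$, the same bound holds for $m_1(t)$, and Gronwall's inequality in integral form gives $m_1(T)\le C\epsilon^2$. Inserting this into the displayed bound for $x_2$ finishes the argument.

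I expect the only genuinely delicate point to be the bookkeeping around the interpolating exponent $\gamma$: one must simultaneously check that the blow-up of $\epsilon^{-\gamma}$ near $s=0$ is integrable, so that $e^{A}$ stays $O(1)$, and that the decay of $\epsilon^{\gamma(s)}$ is slow enough that $\epsilon^{\gamma(s)}e^{-\lambda s/\epsilon}$ still carries a full factor $\epsilon^2$ of smallness. The factor $2$ in $t_1=-2\epsilon\ln\epsilon/\lambda$ is exactly what makes both work, by turning $e^{-\lambda s/\epsilon}$ into $e^{-\lambda s/(2\epsilon)}$ after absorbing $\epsilon^{-s/t_1}=e^{\lambda s/(2\epsilon)}$; everything else is the routine two-scale Gronwall machinery already used for Claim~\ref{claim-2}.
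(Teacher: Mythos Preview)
Your proof is correct and follows essentially the same route as the paper: feed the Gronwall bound for $x_2$ into the $x_1$ inequality, solve by variation of constants, and close with Gronwall for $m_1(t)$. The only organizational difference is that the paper argues separately on $[0,t_1]$ and $[t_1,T]$, whereas you treat $[0,T]$ at once by bounding the two key integrals $\int_0^T a_1(1+\epsilon^{-\gamma(s)})\,ds\le C$ and $\int_0^T \epsilon^{\gamma(s)}e^{-\lambda s/\epsilon}\,ds\le C\epsilon^2$ globally (splitting at $t_1$ only inside those integrals); this is slightly cleaner but relies on the same computations the paper leaves implicit.
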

\begin{proof}
  As in Claim~\ref{claim-2}, we can obtain
  \begin{align}
    x_2(t) & \le e^{-\frac{\lambda t}{\epsilon}} +
    \frac{a_3}{\lambda} \max_{0 \le s \le t} x_1(s) \label{x2-x1-1}\\
    \max_{0\le s \le t} x_1(s) & \le a_2\int_0^t e^{a_1\int_s^t (1 +
    \epsilon^{-\gamma(r)}) dr} \epsilon^{\gamma(s)} \big[e^{-\frac{\lambda
    s}{\epsilon}} + \frac{a_3}{\lambda}\big(\max_{0 \le r \le s} x_1(r)\big)\big] ds \,.
  \end{align}
 Then, for $t < t_1$, the second inequality above implies 
  \begin{align}
    \max_{0\le s \le t} x_1(s) \le C \epsilon^2 + \frac{a_2a_3}{\lambda}
    \int_0^{t} e^{a_1\int_s^t (1 +
    \epsilon^{-\gamma(r)}) dr} \epsilon^{\gamma(s)} \big(\max_{0 \le r \le s} x_1(r)\big) ds \,.
  \end{align}
Using (\ref{x2-x1-1})  and Gronwall's inequality again, we conclude that 
\begin{align}
  \max_{0\le s \le t_1} x_1(s) \le C\epsilon^2, \quad x_2(t) \le
  e^{-\frac{\lambda t}{\epsilon}} + C\epsilon^2 , \qquad t \le t_1\,.
  \label{claim-2-3}
\end{align}
Repeating the above argument for $t\in[t_1, T]$, noticing that $x_1(t_1) \le C\epsilon^2$,
$x_2(t_1) \le C\epsilon^2$, $\gamma(t) \equiv 0, t \in [t_1, T]$, it follows that  
\begin{align}
  \max_{t_1\le s \le T} x_1(s) \le C\epsilon^2, \quad x_2(t) \le C\epsilon^2 ,
  \qquad t \in [t_1, T]\,.
  \label{claim-2-4}
\end{align}
The proof is completed by combining (\ref{claim-2-3}) and (\ref{claim-2-4}).
\qed
\end{proof}

\section{Properties of the stationary process}
\label{app-2}

For fixed $x \in \mathbb{R}^k$ and $\tau \in \mathbb{R}$, we introduce the process
\begin{align}
  d\xi^x_{\tau, s} &= \frac{1}{\epsilon} g(x,\xi^x_{\tau, s}) ds +
  \frac{1}{\sqrt{\epsilon}}\alpha_2(x,\xi^x_{\tau, s}) dw_s\,, \quad s\ge
\tau\,, \quad
  \xi^x_{\tau, \tau} = y
\end{align}
where $w_s$ is a standard Wiener process in $\mathbb{R}^{m_2}$.
In the following, we summarize some properties related to the above
process that we called \emph{the fast subsystem} in Section \ref{sec-main}. See also \cite{liu2010,da1996ergodicity} for additional results.
\begin{lemma}
  Under Assumptions~\ref{assumption-1}--\ref{assumption-2}, there exists a constant
  $C > 0$, independent of $\epsilon, x, y$, such that: 
  \begin{enumerate}
    \item
  $\bE|\xi^x_{\tau, s}|^4 \le e^{-\frac{\lambda (s-\tau)}{\epsilon}} |y|^4 +
  C\big(|x|^4 + 1\big)$.
    \item 
      For $\tau_1 \le \tau_2$, it holds 
      \[\bE|\xi^x_{\tau_2, s} - \xi^x_{\tau_1, s}|^4 \le
  C\big(|x|^4 + |y|^4+1\big)\,e^{-\frac{4\lambda(s-\tau_2)}{\epsilon}}\,,
  \quad s \ge \tau_2\,.
  \]
\item For $x, x' \in \mathbb{R}^k$ and $\tau_1 \le \tau_2$, 
  \begin{align}
    \bE|\xi^{x'}_{\tau_2, s} - \xi^{x}_{\tau_1, s}|^4\le
  e^{-\frac{2\lambda(s-\tau_2)}{\epsilon}} \big(|x|^4 + |y|^4 + 1\big) + C |x' -
  x|^4\,, \quad s \ge \tau_2\,.
  \notag
  \end{align}
  \end{enumerate}
  \label{lemma-stat-bound}
\end{lemma}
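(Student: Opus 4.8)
The plan is to prove the three moment bounds for the stationary (fast) process by repeated application of Itô's formula together with the dissipativity condition \eqref{mixing} (equivalently \eqref{mixing-1}) from Assumption~\ref{assumption-2}, exactly in the spirit of Lemma~\ref{lemma-4th-stability} but now keeping careful track of the exponential decay rates in $s-\tau$. For part~(1), I would apply Itô's formula to $|\xi^x_{\tau,s}|^4$, take expectations, and use that the generator acts on $F(y)=|y|^4$ as $4|y|^2\langle g(x,y),y\rangle + 2|y|^2\|\alpha_2(x,y)\|^2 + 4|\alpha_2^T(x,y)y|^2 \le 4|y|^2\big(\langle g(x,y),y\rangle + \tfrac32\|\alpha_2(x,y)\|^2\big)$, which by \eqref{mixing-1} is bounded above by $4|y|^2\big(-\tfrac\lambda2|y|^2 + C(|x|^2+1)\big) \le -\tfrac{\lambda}{\epsilon}|y|^4 + \tfrac{C}{\epsilon}(|x|^4+1)$ after Young's inequality and division by $\epsilon$ (the $1/\epsilon$ coming from the scaling of the fast dynamics). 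This yields the scalar ODE inequality $\frac{d}{ds}\bE|\xi^x_{\tau,s}|^4 \le -\frac\lambda\epsilon \bE|\xi^x_{\tau,s}|^4 + \frac{C}\epsilon(|x|^4+1)$ with initial value $|y|^4$ at $s=\tau$; Gronwall's inequality in integral form then gives $\bE|\xi^x_{\tau,s}|^4 \le e^{-\lambda(s-\tau)/\epsilon}|y|^4 + C(|x|^4+1)$, which is the assertion.

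For part~(2), the difference process $\zeta_s := \xi^x_{\tau_2,s}-\xi^x_{\tau_1,s}$ satisfies, for $s\ge\tau_2$, an SDE driven by the \emph{same} Wiener process with drift $\tfrac1\epsilon\big(g(x,\xi^x_{\tau_2,s})-g(x,\xi^x_{\tau_1,s})\big)$ and diffusion $\tfrac1{\sqrt\epsilon}\big(\alpha_2(x,\xi^x_{\tau_2,s})-\alpha_2(x,\xi^x_{\tau_1,s})\big)$. Applying Itô to $|\zeta_s|^4$ and using Assumption~\ref{assumption-2} directly (in the form $\langle g(x,y_1)-g(x,y_2),y_1-y_2\rangle + \tfrac3\beta\|\alpha_2(x,y_1)-\alpha_2(x,y_2)\|^2 \le -\lambda|y_1-y_2|^2$ with $\beta=1$), the analogue of the computation in Lemma~\ref{auxiliary-error} gives $\frac{d}{ds}\bE|\zeta_s|^4 \le -\tfrac{4\lambda}\epsilon\bE|\zeta_s|^4$, hence $\bE|\zeta_s|^4 \le e^{-4\lambda(s-\tau_2)/\epsilon}\,\bE|\zeta_{\tau_2}|^4$. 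It remains to bound the initial value $\bE|\zeta_{\tau_2}|^4 = \bE|y - \xi^x_{\tau_1,\tau_2}|^4 \le C\big(|y|^4 + \bE|\xi^x_{\tau_1,\tau_2}|^4\big) \le C(|x|^4+|y|^4+1)$ by part~(1), which yields the claim.

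For part~(3), I would split $\xi^{x'}_{\tau_2,s}-\xi^x_{\tau_1,s} = \big(\xi^{x'}_{\tau_2,s}-\xi^{x}_{\tau_2,s}\big) + \big(\xi^{x}_{\tau_2,s}-\xi^{x}_{\tau_1,s}\big)$ and estimate the two pieces separately. The second piece is handled by part~(2). For the first piece, $\eta_s := \xi^{x'}_{\tau_2,s}-\xi^{x}_{\tau_2,s}$ solves an SDE whose drift is $\tfrac1\epsilon\big(g(x',\xi^{x'}_{\tau_2,s})-g(x,\xi^{x}_{\tau_2,s})\big)$; writing $g(x',\xi^{x'})-g(x,\xi^{x}) = \big(g(x',\xi^{x'})-g(x,\xi^{x'})\big) + \big(g(x,\xi^{x'})-g(x,\xi^{x})\big)$ and similarly for $\alpha_2$, Itô on $|\eta_s|^4$ combined with Assumption~\ref{assumption-2} for the "diagonal in $y$" part, the Lipschitz bound on $\nabla_x g,\nabla_x\alpha_2$ for the "$x$ vs $x'$" part, and Young's inequality yields $\frac{d}{ds}\bE|\eta_s|^4 \le -\tfrac{2\lambda}\epsilon\bE|\eta_s|^4 + \tfrac{C}\epsilon|x'-x|^4$ with $\eta_{\tau_2}=0$; Gronwall then gives $\bE|\eta_s|^4 \le C|x'-x|^4$ uniformly in $\epsilon$ and $s\ge\tau_2$. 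Combining the two estimates via the elementary inequality $|a+b|^4\le C(|a|^4+|b|^4)$ produces the stated bound, where the $e^{-2\lambda(s-\tau_2)/\epsilon}$ factor survives on the part~(2) contribution (one has $4\lambda \ge 2\lambda$, so that term is even smaller). The only mildly delicate point — the "main obstacle", though it is routine — is choosing the Young's-inequality splittings so that the cross terms are absorbed into $-\tfrac\lambda\epsilon$-type terms without generating spurious $\epsilon^{-1}$ prefactors on the Lipschitz-in-$x$ contributions; since those contributions come from $\tfrac1\epsilon\nabla_x g\cdot(x'-x)$, one does pick up a $1/\epsilon$, but it multiplies the $\epsilon$-free quantity $|x'-x|^4$ and the $-\tfrac{2\lambda}\epsilon$ decay exactly cancels it in the Gronwall estimate, leaving an $\epsilon$-independent constant.
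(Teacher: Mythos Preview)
Your proposal is correct and, for parts~(1) and~(2), essentially identical to the paper's proof: Ito's formula on $|\xi|^4$, the dissipativity inequality~\eqref{mixing-1} (resp.\ Assumption~\ref{assumption-2} for the difference), Young's inequality, and Gronwall. For part~(3) you take a slightly different but equally valid route: you insert the intermediate process $\xi^{x}_{\tau_2,s}$ and estimate $\xi^{x'}_{\tau_2,s}-\xi^{x}_{\tau_2,s}$ and $\xi^{x}_{\tau_2,s}-\xi^{x}_{\tau_1,s}$ separately, whereas the paper applies Ito directly to $|\xi^{x'}_{\tau_2,s}-\xi^{x}_{\tau_1,s}|^4$ and splits the drift and diffusion increments (adding and subtracting $g(x',\xi^{x}_{\tau_1,s})$, etc.) inside the single Ito computation. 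Your version is arguably cleaner because the $\eta_s$ piece has zero initial data and the $\zeta_s$ piece is literally part~(2); the paper's direct computation avoids the crude constant from $|a+b|^4\le 8(|a|^4+|b|^4)$ and obtains the differential inequality with rate $-2\lambda/\epsilon$ in one step. Both yield the stated bound (indeed your route gives a sharper $e^{-4\lambda(s-\tau_2)/\epsilon}$ on the transient term).
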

\begin{proof}
  \begin{enumerate}
    \item
  By Ito's formula,  we have 
  \begin{align}
    \frac{d\bE|\xi^x_{\tau, s}|^4}{ds} =& 
    \frac{1}{\epsilon}
    \bE\Big[|\xi^x_{\tau, s}|^2 \big(4\langle
      g(x, \xi^x_{\tau, s}), \xi^x_{\tau, s}\rangle +
      2\|\alpha_2(x,\xi^x_{\tau, s})\|^2\big) + 4|\alpha^T_2(x,\xi^x_{\tau,
  s})\xi^x_{\tau, s}|^2\Big] \notag \\
  \le& 
\frac{1}{\epsilon} \bE\Big[|\xi^x_{\tau, s}|^2 \big(4\langle
      g(x, \xi^x_{\tau, s}), \xi^x_{\tau, s}\rangle +
    6\|\alpha_2(x,\xi^x_{\tau, s})\|^2\big)\Big]\,.  \notag 
  \end{align}
  Applying inequality (\ref{mixing-1}) in Remark~\ref{rmk-1} and inequality
  (\ref{hodler-young-ineq}), we obtain
  \begin{align}
    \frac{d\bE|\xi^x_{\tau, s}|^4}{ds} 
    \le & -\frac{2\lambda}{\epsilon} \bE|\xi^x_{\tau, s}|^4 + \frac{C}{\epsilon}
    \bE\Big[|\xi^x_{\tau, s}|^2 (|x|^2 + 1)\Big] \notag \\
    \le & -\frac{\lambda}{\epsilon} \bE|\xi^x_{\tau, s}|^4 + \frac{C}{\epsilon}
    \Big(|x|^4 + 1\Big)\,,  \notag 
  \end{align}
  and the first statement follows from Gronwall's inequality.  
\item For the second statement, using Ito's formula and
  Assumption~\ref{assumption-2}, it follows
  \begin{align}
    & \frac{d\bE|\xi^x_{\tau_2, s} - \xi^x_{\tau_1, s}|^4}{ds} \notag \\
=& \frac{1}{\epsilon}
    \bE\Big[
      |\xi^x_{\tau_2, s} - \xi^x_{\tau_1, s}|^2 \big(4\langle
      g(x, \xi^x_{\tau_2, s}) - g(x, \xi^x_{\tau_1, s}), \xi^x_{\tau_2, s} - \xi^x_{\tau_1, s}
      \rangle \notag \\
      &+ 2\|\alpha_2(x,\xi^x_{\tau_2, s})-\alpha_2(x,\xi^x_{\tau_1, s})\|^2\big) 
      + 4\big|\big(\alpha_2(x,\xi^x_{\tau_2, s})- \alpha_2(x,\xi^x_{\tau_1,
    s})\big)^T\big(\xi^x_{\tau_2, s} - \xi^x_{\tau_1, s}\big)\big|^2\Big] \notag \\
\le& \frac{1}{\epsilon}
    \bE\Big[
      |\xi^x_{\tau_2, s} - \xi^x_{\tau_1, s}|^2 \big(4\langle
      g(x, \xi^x_{\tau_2, s}) - g(x, \xi^x_{\tau_1, s}), \xi^x_{\tau_2, s} - \xi^x_{\tau_1, s}
      \rangle + 6\|\alpha_2(x,\xi^x_{\tau_2, s})-\alpha_2(x,\xi^x_{\tau_1, s})\|^2\big) \Big] \notag \\
\le&
-\frac{4\lambda}{\epsilon} \bE|\xi^x_{\tau_2, s} - \xi^x_{\tau_1, s}|^4\,. \notag
  \end{align}
  Therefore, integrating and using the first statement above, we obtain
  \begin{align}
    \bE|\xi^x_{\tau_2, s} - \xi^x_{\tau_1, s}|^4 \le
  e^{-\frac{4\lambda(s-\tau_2)}{\epsilon}} \bE|\xi^x_{\tau_1, \tau_2} - y|^4 \le
C\big(1+|x|^4 + |y|^4\big)e^{-\frac{4\lambda(s-\tau_2)}{\epsilon}}\,. \notag
  \end{align}
\item For the third statement, in a similar way, applying Ito's formula, using
  Assumption~\ref{assumption-2}, as well as Lipschitz property of
   functions $g$ and $\alpha_2$, we have  
  \begin{align}
    &    \frac{d\bE|\xi^{x'}_{\tau_2, s} - \xi^{x}_{\tau_1, s}|^4}{ds} \notag \\
 =& \frac{1}{\epsilon}
    \bE\Big[
      |\xi^{x'}_{\tau_2, s} - \xi^x_{\tau_1, s}|^2 \big(4\langle
      g(x', \xi^{x'}_{\tau_2, s}) - g(x, \xi^x_{\tau_1, s}), \xi^{x'}_{\tau_2, s} - \xi^x_{\tau_1, s}
      \rangle \notag \\
      &+ 2\|\alpha_2(x',\xi^{x'}_{\tau_2, s})-\alpha_2(x,\xi^x_{\tau_1, s})\|^2\big) 
      + 4\big|\big(\alpha_2(x',\xi^{x'}_{\tau_2, s})- \alpha_2(x,\xi^x_{\tau_1,
    s})\big)^T\big(\xi^{x'}_{\tau_2, s} - \xi^x_{\tau_1, s}\big)\big|^2\Big] \notag \\
    \le & \frac{1}{\epsilon}
    \bE\Big[
      |\xi^{x'}_{\tau_2, s} - \xi^x_{\tau_1, s}|^2 \big(4\langle
      g(x', \xi^{x'}_{\tau_2, s}) - g(x, \xi^x_{\tau_1, s}), \xi^{x'}_{\tau_2, s} - \xi^x_{\tau_1, s}
      \rangle + 6\|\alpha_2(x',\xi^{x'}_{\tau_2, s})-\alpha_2(x,\xi^x_{\tau_1, s})\|^2\big) \Big] \notag \\
      \le & 
 \frac{1}{\epsilon}
    \bE\Big[
      |\xi^{x'}_{\tau_2, s} - \xi^x_{\tau_1, s}|^2 \big(4\langle
      g(x', \xi^{x'}_{\tau_2, s}) - g(x', \xi^x_{\tau_1, s}), \xi^{x'}_{\tau_2, s} - \xi^x_{\tau_1, s}
      \rangle + 12\|\alpha_2(x',\xi^{x'}_{\tau_2, s})-\alpha_2(x',\xi^x_{\tau_1, s})\|^2\big) \Big] \notag \\
      & + 
 \frac{1}{\epsilon}
    \bE\Big[
      |\xi^{x'}_{\tau_2, s} - \xi^x_{\tau_1, s}|^2 \big(4\langle
      g(x', \xi^{x}_{\tau_1, s}) - g(x, \xi^x_{\tau_1, s}), \xi^{x'}_{\tau_2, s} - \xi^x_{\tau_1, s}
      \rangle + 12\|\alpha_2(x',\xi^{x}_{\tau_1, s})-\alpha_2(x,\xi^x_{\tau_1, s})\|^2\big) \Big] \notag \\
    \le & 
    -\frac{4\lambda}{\epsilon} \bE|\xi^{x'}_{\tau_2, s} - \xi^{x}_{\tau_1,
    s}|^4 
   +  \frac{C}{\epsilon}
    \bE\big(
|\xi^{x'}_{\tau_2, s} - \xi^x_{\tau_1, s}|^3|x'-x|\big) + \frac{C}{\epsilon}
    \bE\big(|\xi^{x'}_{\tau_2, s} - \xi^x_{\tau_1, s}|^2|x'-x|^2\big) \notag \\
    \le & 
    -\frac{2\lambda}{\epsilon} \bE|\xi^{x'}_{\tau_2, s} - \xi^{x}_{\tau_1,
    s}|^4 + \frac{C}{\epsilon} |x' - x|^4\,, \notag 
  \end{align}
  where inequality (\ref{hodler-young-ineq}) is used to obtain the last inequality.
  Gronwall's inequality together with the first statement above then yield the assertion.
  \end{enumerate}
  \qed
\end{proof}

Now consider the derivative process 
\begin{align}
  d\xi^x_{\tau, s, x_i} &= \frac{1}{\epsilon} \Big(D_{x_i} g(x,\xi^x_{\tau, s}) +
  \nabla_{y} g(x,\xi^x_{\tau, s}) \xi^x_{\tau, s, x_i}\Big) ds +
  \frac{1}{\sqrt{\epsilon}}\Big(D_{x_i} \alpha_2(x,\xi^x_{\tau, s}) + \nabla_{y}
  \alpha_2(x,\xi^x_{\tau, s}) \xi^x_{\tau, s, x_i}\Big) dw_s\,, \notag
\end{align}
with $s\ge \tau\,, \xi^x_{\tau, \tau,x_i} = 0$, $1\le i \le k$. In the above,
we used $D_{x_i}$ to denote derivatives with respect to scalar $x_i \in
\mathbb{R}$ and
$\nabla_y$ to denote derivatives with respect to a vector $y \in \mathbb{R}^l$.
We summarize its properties in the following result. 
\begin{lemma}
  Under Assumptions~\ref{assumption-1}--\ref{assumption-2}, there exists a constant
  $C > 0$, independent of $\epsilon, x, y$, such that $\forall 1 \le i \le k$, 
  \begin{enumerate}
    \item
      For $x \in \mathbb{R}^k$, $s \ge \tau$, $\bE|\xi^x_{\tau, s, x_i}|^4 \le C$.
    \item
      For $\tau_1 \le \tau_2$, $x \in \mathbb{R}^k$, 
\begin{align}
  \bE|\xi^x_{\tau_2, s, x_i} - \xi^x_{\tau_1, s, x_i}|^2 \le C\big(1 +
  |x|^2+|y|^2\big) e^{-\frac{\lambda(s-\tau_2)}{\epsilon}}\,.
  \notag
\end{align}
    \item
      For $\tau_1 \le \tau_2$, $x, x' \in \mathbb{R}^k$, 
\begin{align}
  \bE|\xi^{x'}_{\tau_2, s, x_i} - \xi^{x}_{\tau_1, s, x_i}|^2 \le
  Ce^{-\frac{\lambda(s-\tau_2)}{\epsilon}} \left[ 1 + \frac{s -
  \tau_2}{\epsilon} \big(1 + |x|^2+|y|^2\big)\right] + C |x - x'|^2\,.
  \notag
\end{align}
  \end{enumerate}
  \label{lemma-stat-derivative-bound}
\end{lemma}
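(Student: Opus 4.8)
The plan is to prove the three assertions in the order stated, each time adapting the argument used for Lemma~\ref{lemma-stat-bound}: apply Ito's formula to the relevant power of the (difference of) derivative process(es), take expectations, use the one-sided dissipativity (\ref{mixing}) together with the factor $3$ from Assumption~\ref{assumption-2} to generate exponential decay of rate $\lambda/\epsilon$, bound the remaining drift and diffusion contributions using the Lipschitz and boundedness properties in Assumption~\ref{assumption-1} and the H\"older--Young inequality (\ref{hodler-young-ineq}), and close with Gronwall's inequality. Statement $(1)$ will be used in the proofs of $(2)$ and $(3)$, and Lemma~\ref{lemma-stat-bound} will be used in $(2)$ and $(3)$.

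For $(1)$, I would write out Ito's formula for $|\xi^x_{\tau,s,x_i}|^4$. The terms involving $\nabla_y g(x,\xi^x_{\tau,s})\xi^x_{\tau,s,x_i}$ and $\nabla_y\alpha_2(x,\xi^x_{\tau,s})\xi^x_{\tau,s,x_i}$ combine, after the elementary bound $2\|\cdot\|^2 + 4|(\cdot)^T(\cdot)|^2 \le 6\|\cdot\|^2$ and (\ref{mixing}), into $-2\lambda\epsilon^{-1}\bE|\xi^x_{\tau,s,x_i}|^4$; the remaining terms pair the bounded quantities $D_{x_i} g$, $D_{x_i}\alpha_2$ against lower powers of $|\xi^x_{\tau,s,x_i}|$, and Young's inequality absorbs them into half of the dissipation plus a term of size $C\epsilon^{-1}$. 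This gives $\frac{d}{ds}\bE|\xi^x_{\tau,s,x_i}|^4 \le -\lambda\epsilon^{-1}\bE|\xi^x_{\tau,s,x_i}|^4 + C\epsilon^{-1}$, and since $\xi^x_{\tau,\tau,x_i}=0$, Gronwall yields the uniform bound.

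For $(2)$, set $\eta_s = \xi^x_{\tau_2,s,x_i} - \xi^x_{\tau_1,s,x_i}$; for $s\ge\tau_2$ this solves a linear SDE whose homogeneous part is the $\nabla_y g$, $\nabla_y\alpha_2$ terms evaluated along $\xi^x_{\tau_2,s}$ acting on $\eta_s$, with forcing built from $D_{x_i}g(x,\xi^x_{\tau_2,s}) - D_{x_i}g(x,\xi^x_{\tau_1,s})$ and $(\nabla_y g(x,\xi^x_{\tau_2,s}) - \nabla_y g(x,\xi^x_{\tau_1,s}))\xi^x_{\tau_1,s,x_i}$ (and analogues for $\alpha_2$). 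Applying Ito to $|\eta_s|^2$ and using (\ref{mixing}) on the homogeneous part, then using that $\nabla^2 g$, $\nabla^2\alpha_2$ are bounded (so $D_{x_i}g$, $\nabla_y g$, $D_{x_i}\alpha_2$, $\nabla_y\alpha_2$ are Lipschitz) to estimate the forcing, Young's inequality, Cauchy--Schwarz, Lemma~\ref{lemma-stat-bound}$(2)$ and part $(1)$ bound the expected forcing by $C\epsilon^{-1}(1+|x|^2+|y|^2)e^{-2\lambda(s-\tau_2)/\epsilon}$. Since $\bE|\eta_{\tau_2}|^2 = \bE|\xi^x_{\tau_1,\tau_2,x_i}|^2 \le C$ by $(1)$, and the forcing decays strictly faster than the dissipation rate $\lambda/\epsilon$, Gronwall gives $\bE|\eta_s|^2 \le C(1+|x|^2+|y|^2)e^{-\lambda(s-\tau_2)/\epsilon}$.

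Part $(3)$ runs in exactly the same way with $\eta_s = \xi^{x'}_{\tau_2,s,x_i} - \xi^x_{\tau_1,s,x_i}$, the only change being that the coefficient differences now additionally produce a $|x'-x|$ term; replacing Lemma~\ref{lemma-stat-bound}$(2)$ by Lemma~\ref{lemma-stat-bound}$(3)$, the expected forcing is bounded by $C\epsilon^{-1}\big[e^{-\lambda(s-\tau_2)/\epsilon}(1+|x|^2+|y|^2) + |x'-x|^2\big]$. Here the first piece decays at exactly the homogeneous rate $\lambda/\epsilon$, so convolving it against the Gronwall kernel produces the resonant factor $(s-\tau_2)/\epsilon$, whereas the $|x'-x|^2$ piece integrates to $C|x'-x|^2$ and the initial value contributes $Ce^{-\lambda(s-\tau_2)/\epsilon}$, which together give precisely the stated estimate. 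The main obstacle I anticipate is purely the bookkeeping of the diffusion (Ito-correction) terms — where the constant $3$ in Assumption~\ref{assumption-2} is needed to absorb $\|\nabla_y\alpha_2\,\eta\|^2$ into the dissipation — and applying Young's inequality with small enough constants that the $-\lambda\epsilon^{-1}$ dissipation survives while the various exponential decay rates of the forcing are tracked correctly; beyond this there is no conceptual difficulty.
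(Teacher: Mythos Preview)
Your proposal is correct and follows essentially the same approach as the paper: Ito's formula on the appropriate power, extraction of the dissipation rate $\lambda/\epsilon$ via (\ref{mixing}) (where the factor $3$ is indeed exactly what is needed to handle the $\|\nabla_y\alpha_2\,\eta\|^2$ term after expanding the squared diffusion), control of the forcing via the $C^2$-boundedness in Assumption~\ref{assumption-1} together with Lemma~\ref{lemma-stat-bound} and part~$(1)$, and Gronwall. Your identification of the resonant factor $(s-\tau_2)/\epsilon$ in~$(3)$ as coming from a forcing term that decays at exactly the dissipation rate is also precisely what happens in the paper's computation.
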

\begin{proof}
  \begin{enumerate}
    \item
      Using Ito's formula, Assumption \ref{assumption-1} (Lipschitz continuity
      of functions $g$ and $\alpha_2$),
      inequality (\ref{mixing}) in Remark~\ref{rmk-1}, as well as inequality
      (\ref{hodler-young-ineq}), we see that  
\begin{align}
  & \frac{d\bE|\xi^x_{\tau, s, x_i}|^4}{ds} \notag \\
  \le& \frac{1}{\epsilon} \bE\Big[|\xi^x_{\tau,
s, x_i}|^2\Big(4\langle D_{x_i} g(x,\xi^x_{\tau, s}) + \nabla_{y} g(x,\xi^x_{\tau,
s}) \xi^x_{\tau, s, x_i},  \xi^x_{\tau, s, x_i}\rangle + 6\|D_{x_i}
\alpha_2(x,\xi^x_{\tau, s}) + \nabla_{y} \alpha_2(x,\xi^x_{\tau, s})
\xi^x_{\tau, s, x_i}\|^2\Big)\Big] \notag \\
\le& \frac{1}{\epsilon} \bE\Big[|\xi^x_{\tau,
    s, x_i}|^2\Big(C|\xi^x_{\tau, s, x_i}| + 4\langle \nabla_{y} g(x,\xi^x_{\tau,
  s}) \xi^x_{\tau, s, x_i},  \xi^x_{\tau, s, x_i}\rangle + C + 12\|\nabla_{y} \alpha_2(x,\xi^x_{\tau, s})
\xi^x_{\tau, s, x_i}\|^2\Big)\Big] \notag \\
 \le& -\frac{2\lambda}{\epsilon} \bE|\xi^x_{\tau, s, x_i}|^4 + \frac{C}{\epsilon} \notag
\end{align}
and therefore $\bE|\xi^x_{\tau, s, x_i}|^4 \le C$ by Gronwall's inequality.

\item
Now consider $\xi^x_{\tau_1, s,x_i}, \xi^x_{\tau_2, s,x_i}$ with $\tau_1 \le \tau_2$.  
Using Lipschitz condition of functions $g, \alpha_2$, 
inequality (\ref{mixing}) in Remark~\ref{rmk-1}, as well as inequality
(\ref{hodler-young-ineq}), it follows 
\begin{align}
  & \frac{d\bE|\xi^x_{\tau_2, s, x_i} - \xi^x_{\tau_1, s, x_i}|^2}{ds} \notag \\
 =& \frac{2}{\epsilon} \bE\langle D_{x_i} g(x, \xi^x_{\tau_2, s}) - D_{x_i} g(x,
 \xi^x_{\tau_1, s}) + \nabla_{y} g(x, \xi^x_{\tau_2, s}) \xi^x_{\tau_2, s, x_i} - \nabla_{y}
 g(x, \xi^x_{\tau_1, s}) \xi^x_{\tau_1, s, x_i},  \xi^x_{\tau_2, s,
 x_i}-\xi^x_{\tau_1, s, x_i}\rangle \notag \\
 & + \frac{1}{\epsilon} \bE \|D_{x_i} \alpha_2(x, \xi^x_{\tau_2, s}) - D_{x_i}
 \alpha_2(x, \xi^x_{\tau_1, s}) + \nabla_{y} \alpha_2(x, \xi^x_{\tau_2, s})
 \xi^x_{\tau_2, s, x_i} - \nabla_{y}
\alpha_2(x, \xi^x_{\tau_1, s}) \xi^x_{\tau_1, s, x_i}\|^2 \notag \\
\le & \frac{C}{\epsilon}\bE\Big(|\xi^x_{\tau_2, s} - \xi^x_{\tau_1,
s}||\xi^x_{\tau_2, s, x_i} - \xi^x_{\tau_1, s, x_i}|\Big) + \frac{2}{\epsilon}
\bE\langle
\big(\nabla_{y} g(x, \xi^x_{\tau_2, s})-\nabla_{y} g(x, \xi^x_{\tau_1, s})\big)\xi^x_{\tau_1, s, x_i},
\xi^x_{\tau_2, s, x_i} - \xi^x_{\tau_1, s, x_i}\rangle \notag \\
& + \frac{2}{\epsilon} \bE\langle
\nabla_{y} g(x, \xi^x_{\tau_2, s})(\xi^x_{\tau_2, s, x_i} - \xi^x_{\tau_1, s, x_i}),
\xi^x_{\tau_2, s, x_i} - \xi^x_{\tau_1, s, x_i}\rangle + \frac{C}{\epsilon}
\bE|\xi^x_{\tau_2, s} - \xi^x_{\tau_1, s}|^2\notag \\
& + \frac{3}{\epsilon} \bE\|\big(\nabla_{y} \alpha_2(x, \xi^x_{\tau_2, s})-\nabla_{y}
\alpha_2(x, \xi^x_{\tau_1, s})\big)
\xi^x_{\tau_1, s, x_i}\|^2 
+ \frac{3}{\epsilon} \bE\|\nabla_{y} \alpha_2(x, \xi^x_{\tau_2, s})(\xi^x_{\tau_2, s,
x_i} - \xi^x_{\tau_1, s, x_i})\|^2 \notag \\
\le & -\frac{\lambda}{\epsilon} \bE|\xi^x_{\tau_2, s, x_i} - \xi^x_{\tau_1, s,
x_i}|^2 + \frac{C}{\epsilon} \big(\bE|\xi^x_{\tau_2, s} - \xi^x_{\tau_1,
s}|^4\big)^{\frac{1}{2}} \big(\bE|\xi^x_{\tau_1, s, x_i}|^4)^{\frac{1}{2}} + 
 \frac{C}{\epsilon}
\bE|\xi^x_{\tau_2, s} - \xi^x_{\tau_1, s}|^2 \notag \\
\le & -\frac{\lambda}{\epsilon} \bE|\xi^x_{\tau_2, s, x_i} - \xi^x_{\tau_1, s,
x_i}|^2 + \frac{C}{\epsilon} (1 + |x|^2+|y|^2)
e^{-\frac{2\lambda(s-\tau_2)}{\epsilon}}\,, \notag 
\end{align}
where the first assertion above and 
Lemma~\ref{lemma-stat-bound} have been used in the last inequality.
Then Gronwall's inequality entails 
\begin{align}
  \bE|\xi^x_{\tau_2, s, x_i} - \xi^x_{\tau_1, s, x_i}|^2 \le C\big(1 +
  |x|^2+|y|^2\big) e^{-\frac{\lambda(s-\tau_2)}{\epsilon}}\,.
  \notag 
\end{align}
\item
Consider $\xi^{x}_{\tau_1, s,x_i}, \xi^{x'}_{\tau_2, s,x_i}$ with $\tau_1
\le \tau_2$.  In a similar way, we have  
\begin{align}
  & \frac{d\bE|\xi^{x'}_{\tau_2, s, x_i} - \xi^{x}_{\tau_1, s, x_i}|^2}{ds} \notag
   \\
   =& \frac{2}{\epsilon} \bE\langle D_{x_i} g(x', \xi^{x'}_{\tau_2, s}) - D_{x_i} g(x,
   \xi^x_{\tau_1, s}) + \nabla_{y} g(x', \xi^{x'}_{\tau_2, s}) \xi^{x'}_{\tau_2, s, x_i} - \nabla_{y}
   g(x, \xi^x_{\tau_1, s}) \xi^x_{\tau_1, s, x_i},  \xi^{x'}_{\tau_2, s,
 x_i}-\xi^x_{\tau_1, s, x_i}\rangle \notag \\
 & + \frac{1}{\epsilon} \bE \|D_{x_i} \alpha_2(x', \xi^{x'}_{\tau_2, s}) - D_{x_i}
 \alpha_2(x, \xi^x_{\tau_1, s}) + \nabla_{y} \alpha_2(x', \xi^{x'}_{\tau_2, s})
 \xi^{x'}_{\tau_2, s, x_i} - \nabla_{y}
\alpha_2(x, \xi^x_{\tau_1, s}) \xi^x_{\tau_1, s, x_i}\|^2 \notag \\
\le & \frac{2}{\epsilon} \bE\langle D_{x_i} g(x', \xi^{x'}_{\tau_2, s}) -
D_{x_i} g(x',
   \xi^x_{\tau_1, s}) + \nabla_{y} g(x', \xi^{x'}_{\tau_2,
   s})(\xi^{x'}_{\tau_2, s, x_i} -\xi^x_{\tau_1, s, x_i}),  \xi^{x'}_{\tau_2, s,
 x_i}-\xi^x_{\tau_1, s, x_i}\rangle \notag \\
 & + \frac{2}{\epsilon} \bE\langle D_{x_i} g(x', \xi^{x}_{\tau_1, s}) -
D_{x_i} g(x, \xi^x_{\tau_1, s}) + \big(\nabla_{y} g(x', \xi^{x'}_{\tau_2,
   s}) - \nabla_{y} g(x, \xi^{x}_{\tau_1, s})\big) \xi^x_{\tau_1, s, x_i},  \xi^{x'}_{\tau_2, s,
 x_i}-\xi^x_{\tau_1, s, x_i}\rangle \notag \\
 & + \frac{3}{\epsilon} \bE \|D_{x_i} \alpha_2(x', \xi^{x'}_{\tau_2, s}) - D_{x_i}
 \alpha_2(x, \xi^x_{\tau_1, s})\|^2 
 + \frac{3}{\epsilon} \bE \|\nabla_{y} \alpha_2(x', \xi^{x'}_{\tau_2, s})
 (\xi^{x'}_{\tau_2, s, x_i} - \xi^x_{\tau_1, s, x_i})\|^2 \notag \\
 & + \frac{3}{\epsilon} \bE \|\big(\nabla_{y} \alpha_2(x', \xi^{x'}_{\tau_2, s})
 - \nabla_{y} \alpha_2(x, \xi^x_{\tau_1, s})\big) \xi^x_{\tau_1, s, x_i}\|^2 \notag \\
   \le &
-\frac{2\lambda}{\epsilon} \bE|\xi^{x'}_{\tau_2, s, x_i} - \xi^{x}_{\tau_1, s,
x_i}|^2 
+ \frac{C}{\epsilon} \bE\big(|\xi^{x'}_{\tau_2, s} - \xi^x_{\tau_1,
 s}||\xi^{x'}_{\tau_2, s, x_i}-\xi^x_{\tau_1, s, x_i}|\big) 
+ \frac{C}{\epsilon} \bE\big(|x' - x||\xi^{x'}_{\tau_2, s, x_i}-\xi^x_{\tau_1, s, x_i}|\big) \notag \\
& +\frac{C}{\epsilon} \bE\big[\big(|x'-x|+
|\xi^{x'}_{\tau_2, s}-\xi^x_{\tau_1, s}|\big)|\xi^x_{\tau_1, s,
x_i}||\xi^{x'}_{\tau_2, s, x_i}-\xi^x_{\tau_1, s, x_i}|\big]  
 + \frac{C}{\epsilon} |x-x'|^2+\frac{C}{\epsilon}\bE |\xi^{x'}_{\tau_2,
 s}-\xi^x_{\tau_1, s}|^2 \notag \\
 & + \frac{C}{\epsilon} \bE \big[(|x'-x| + |\xi^{x'}_{\tau_2, s}-\xi^x_{\tau_1,
s}|\big)^2|\xi^x_{\tau_1, s, x_i}|^2\big] \notag \\
   \le &
-\frac{\lambda}{\epsilon} \bE|\xi^{x'}_{\tau_2, s, x_i} - \xi^{x}_{\tau_1, s,
x_i}|^2 + \frac{C}{\epsilon}\Big(|x' - x|^2 + \bE|\xi^{x'}_{\tau_2, s} - \xi^{x}_{\tau_1,
s}|^2 + (\bE|\xi^{x'}_{\tau_2, s} - \xi^{x}_{\tau_1,
s}|^4)^{\frac{1}{2}}\Big) \notag \\
  \le &
-\frac{\lambda}{\epsilon} \bE|\xi^{x'}_{\tau_2, s, x_i} - \xi^{x}_{\tau_1, s,
x_i}|^2 + \frac{C}{\epsilon} \Big[(1 + |x|^2+|y|^2)
  e^{-\frac{\lambda(s-\tau_2)}{\epsilon}} + |x' - x|^2\Big]\,, \notag
\end{align}
and thus 
\begin{align}
  \bE|\xi^{x'}_{\tau_2, s, x_i} - \xi^{x}_{\tau_1, s, x_i}|^2 \le
  Ce^{-\frac{\lambda(s-\tau_2)}{\epsilon}} \Big[ 1 + \frac{s -
  \tau_2}{\epsilon} (1 + |x|^2+|y|^2)\Big] + C |x' - x|^2\,.
  \notag
\end{align}
\qed
\end{enumerate}
\end{proof}

The above results allow us to define the stationary process
$\xi_s^x=\xi_{-\infty,s}^{x}$ with $\xi_{s}^{x}\sim\rho_{x}(y)\,dy$ where $\rho_{x}$ is the 
stationary probability density \wrt Lebesgue measure, and also the derivative process $\xi^x_{s, x_i}$ 
for $1 \le i \le k$, satisfying that $\forall f \in C^1_b(\mathbb{R}^k\times
\mathbb{R}^l)$ and $\widetilde{f}(x)
= \bE(f(x, \xi_s^x)) = \int_{\mathbb{R}^l} f(x,y) \rho_x(y) dy$, it holds
\begin{align}
  D_{x_i} \widetilde{f}(x) = \bE\big(D_{x_i}f(x, \xi_s^x) +
  \nabla_{y}f(x,\xi_s^x)\xi^x_{s,x_i}\big)\,.
\end{align}

The processes $\xi^x_s$ and $\xi^x_{s, x_i}$ have the following properties:  
\begin{lemma}
Under Assumptions \ref{assumption-1} and \ref{assumption-2}, there is a constant
$C>0$, independent of $\epsilon$, $x$ and $y$, such that 
$\forall f \in C_b^1(\mathbb{R}^l)$: 
\begin{enumerate}
  \item
  \begin{align}
    \Big| \bE f(\xi^x_{0,s}) - \int_{\mathbb{R}^l} f(y) \rho_x(y) dy\Big| \le \sup |f'|
    \Big(|x| + |y| + 1\Big)
    e^{-\frac{\lambda s}{\epsilon}}\,.
  \end{align}
\item 
 \begin{align}
   \Big| \bE\Big(f(\xi^x_{0,s})\xi^x_{0,s,x_i}\Big)
   -\bE\Big(f(\xi^x_{s})\xi^x_{s,x_i}\Big) \Big| 
\le C\Big(\sup|f| + \sup|f'|\Big) \Big(1 + |x| + |y|\Big) e^{-\frac{\lambda
s}{2\epsilon}}\,.
  \end{align}
  \end{enumerate}
  \label{exp-converge}
\end{lemma}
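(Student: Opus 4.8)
The plan is to prove both statements by a synchronous-coupling argument against the stationary process, reducing everything to the contraction estimates already established in Lemma~\ref{lemma-stat-bound} and Lemma~\ref{lemma-stat-derivative-bound}. Throughout I would use the representations $\int_{\mathbb{R}^l} f(y)\rho_x(y)\,dy = \bE f(\xi^x_s)$ and, for the derivative statement, the fact that $\xi^x_s = \xi^x_{-\infty,s}$ and $\xi^x_{s,x_i}$ arise as the $L^4$- and $L^2$-limits, respectively, of $\xi^x_{\tau,s}$ and $\xi^x_{\tau,s,x_i}$ as $\tau\to-\infty$ --- precisely the content of the ``$\tau_1\le\tau_2$'' parts of the two preceding lemmas, so these limits may be invoked directly.

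For part~(1), I would realise $\xi^x_{0,s}$ and $\xi^x_s$ on a common probability space driven by the same Wiener process on $[0,s]$, so that they solve the same fast SDE with initial data $\xi^x_{0,0}=y$ and $\xi^x_0\sim\rho_x(y)\,dy$ (the latter independent of the increments of $w$ on $[0,s]$). Applying Ito's formula to $|\xi^x_{0,s}-\xi^x_s|^2$ and using Assumption~\ref{assumption-2} in the form $2\langle g(x,y_1)-g(x,y_2),y_1-y_2\rangle + \|\alpha_2(x,y_1)-\alpha_2(x,y_2)\|^2\le-2\lambda|y_1-y_2|^2$ (which follows from the stated inequality after discarding a nonnegative Frobenius term) gives $\frac{d}{ds}\bE|\xi^x_{0,s}-\xi^x_s|^2\le-\frac{2\lambda}{\epsilon}\bE|\xi^x_{0,s}-\xi^x_s|^2$, hence $\bE|\xi^x_{0,s}-\xi^x_s|^2\le e^{-2\lambda s/\epsilon}\,\bE|y-\xi^x_0|^2$. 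Since $\bE|\xi^x_0|^4\le C(|x|^4+1)$ by Lemma~\ref{lemma-stat-bound}(1) in the stationary limit, one gets $\bE|y-\xi^x_0|^2\le C(1+|x|^2+|y|^2)$, and the Lipschitz bound $|\bE f(\xi^x_{0,s})-\bE f(\xi^x_s)|\le\sup|f'|\,(\bE|\xi^x_{0,s}-\xi^x_s|^2)^{1/2}$ yields the claimed estimate (with the generic constant absorbed into the statement).

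For part~(2), I would telescope the difference as
\[
\bE\big(f(\xi^x_{0,s})\xi^x_{0,s,x_i}\big)-\bE\big(f(\xi^x_s)\xi^x_{s,x_i}\big)
=\bE\big[(f(\xi^x_{0,s})-f(\xi^x_s))\,\xi^x_{0,s,x_i}\big]+\bE\big[f(\xi^x_s)\,(\xi^x_{0,s,x_i}-\xi^x_{s,x_i})\big],
\]
and bound the two terms separately. The first is at most $\sup|f'|\,(\bE|\xi^x_{0,s}-\xi^x_s|^2)^{1/2}(\bE|\xi^x_{0,s,x_i}|^2)^{1/2}$; Lemma~\ref{lemma-stat-bound}(2) with $\tau_2=0$, $\tau_1\to-\infty$ gives $\bE|\xi^x_{0,s}-\xi^x_s|^4\le C(1+|x|^4+|y|^4)e^{-4\lambda s/\epsilon}$ and Lemma~\ref{lemma-stat-derivative-bound}(1) gives $\bE|\xi^x_{0,s,x_i}|^4\le C$, so this term is $\le C\sup|f'|\,(1+|x|+|y|)\,e^{-\lambda s/\epsilon}$. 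The second is at most $\sup|f|\,\bE|\xi^x_{0,s,x_i}-\xi^x_{s,x_i}|$, and Lemma~\ref{lemma-stat-derivative-bound}(2) with $\tau_2=0$, $\tau_1\to-\infty$ gives $\bE|\xi^x_{0,s,x_i}-\xi^x_{s,x_i}|^2\le C(1+|x|^2+|y|^2)e^{-\lambda s/\epsilon}$, so this term is $\le C\sup|f|\,(1+|x|+|y|)\,e^{-\lambda s/(2\epsilon)}$. Adding the two contributions and bounding $e^{-\lambda s/\epsilon}\le e^{-\lambda s/(2\epsilon)}$, with $\sup|f|+\sup|f'|$ as common prefactor, gives the result.

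The calculations are routine; the one point requiring care is the bookkeeping of the exponential rates. The factor $e^{-\lambda s/(2\epsilon)}$ in part~(2) --- rather than the $e^{-\lambda s/\epsilon}$ one might naively expect --- is forced, since Lemma~\ref{lemma-stat-derivative-bound}(2) controls the difference of derivative processes only in $L^2$ (not $L^4$), so passing to $\bE|\cdot|$ costs a square root, and it is this term that dominates the final bound. I do not expect any genuine obstacle here beyond assembling the prior estimates correctly.
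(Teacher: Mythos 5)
Your proof is correct and takes essentially the same approach as the paper: for part~(2) you use the identical telescoping split $\bE[f(\xi^x_s)(\xi^x_{0,s,x_i}-\xi^x_{s,x_i})]+\bE[(f(\xi^x_{0,s})-f(\xi^x_s))\xi^x_{0,s,x_i}]$ and bound each term via Cauchy--Schwarz together with Lemma~\ref{lemma-stat-bound} and Lemma~\ref{lemma-stat-derivative-bound} in their $\tau_1\to-\infty$ limits, which is exactly what the paper does, while for part~(1) the paper offers nothing beyond ``follows in a similar fashion'' and your synchronous-coupling derivation is a clean, slightly more direct way to supply it (one could equivalently invoke Lemma~\ref{lemma-stat-bound}(2) with $\tau_2=0$, $\tau_1\to-\infty$). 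Your remark that part~(1) really should carry a multiplicative constant $C$ is accurate: the displayed inequality in the paper's statement omits it, evidently a typographical slip given the lemma's preamble promises one.
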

\begin{proof}
  We only prove the second inequality, as the first one follows in a similar fashion. 
  Using Lemma~\ref{lemma-stat-bound} and Lemma~\ref{lemma-stat-derivative-bound}, we  readily conclude that 
  \begin{align}
    & \Big| \bE\big(f(\xi^x_{0,s})\xi^x_{0,s,x_i}\big)
    -\bE\big(f(\xi^x_{s})\xi^x_{s,x_i}\big)
    \Big| \notag \\
\le & \Big| \bE\big[f(\xi^x_{s})(\xi^x_{0,s,x_i} -\xi^x_{s,x_i})\big]\Big| +
\Big|\bE\big[(f(\xi^x_{0,s}) - f(\xi^x_{s}))\xi^x_{0, s,x_i}\big] \Big| \notag \\
\le & C\big(\sup|f| + \sup|f'|\big) \big(1 + |x| + |y|\big) e^{-\frac{\lambda s}{2\epsilon}}
\notag
\end{align}
\qed
\end{proof}

An analogous property for the stationary process $\xi^x_{s}$ is the following:  
\begin{lemma}
  Under Assumption~\ref{assumption-1} and \ref{assumption-2}, there exists constant
  $C > 0$, independent of $x, x'$, such that 
  \begin{enumerate}
    \item
      For $x \in \mathbb{R}^k$, $\bE|\xi^x_{s, x_i}|^4 \le C$.
    \item
      For $x , x'\in \mathbb{R}^k$, $\bE|\xi^{x'}_{s} - \xi^x_{s}|^4 \le C |x - x'|^4$.
    \item
      For $x, x' \in \mathbb{R}^k$, $\bE|\xi^{x'}_{s, x_i} - \xi^{x}_{s, x_i}|^2 \le C |x - x'|^2$.
  \end{enumerate}
  \label{lemma-stationary}
\end{lemma}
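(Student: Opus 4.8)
The plan is to derive all three bounds as the $\tau\to-\infty$ limits of the estimates in Lemma~\ref{lemma-stat-bound} and Lemma~\ref{lemma-stat-derivative-bound}. Recall that the stationary process $\xi^x_s=\xi^x_{-\infty,s}$ and its derivative $\xi^x_{s,x_i}$ were introduced precisely as limits of $\xi^x_{\tau,s}$ and $\xi^x_{\tau,s,x_i}$ as $\tau\to-\infty$: Lemma~\ref{lemma-stat-bound}(2) shows that $\{\xi^x_{\tau,s}\}_{\tau}$ is Cauchy in $L^4$, and Lemma~\ref{lemma-stat-derivative-bound}(2) shows that $\{\xi^x_{\tau,s,x_i}\}_{\tau}$ is Cauchy in $L^2$, as $\tau\to-\infty$; in both cases the $y$-dependent prefactors carry a factor $e^{-\lambda(s-\tau)/\epsilon}$ and hence vanish in the limit, so the limits are independent of the initial value $y$. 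First I would fix one sequence $\tau_n\to-\infty$ along which the convergence also holds almost surely, so that Fatou's lemma becomes available.

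For part (1), combine the uniform bound $\sup_n\bE|\xi^x_{\tau_n,s,x_i}|^4\le C$ from Lemma~\ref{lemma-stat-derivative-bound}(1) with the almost sure convergence $\xi^x_{\tau_n,s,x_i}\to\xi^x_{s,x_i}$ and apply Fatou's lemma: $\bE|\xi^x_{s,x_i}|^4\le\liminf_n\bE|\xi^x_{\tau_n,s,x_i}|^4\le C$.

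For parts (2) and (3), I would apply Lemma~\ref{lemma-stat-bound}(3) and Lemma~\ref{lemma-stat-derivative-bound}(3) with $\tau_1=\tau_2=\tau_n$. This gives $\bE|\xi^{x'}_{\tau_n,s}-\xi^x_{\tau_n,s}|^4\le e^{-2\lambda(s-\tau_n)/\epsilon}(|x|^4+|y|^4+1)+C|x'-x|^4$ and $\bE|\xi^{x'}_{\tau_n,s,x_i}-\xi^x_{\tau_n,s,x_i}|^2\le Ce^{-\lambda(s-\tau_n)/\epsilon}\big[1+\frac{s-\tau_n}{\epsilon}(1+|x|^2+|y|^2)\big]+C|x-x'|^2$. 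Letting $n\to\infty$, the left-hand sides converge (by $L^4$-convergence of the $x$-processes and $L^2$-convergence of the derivative processes), while on the right-hand sides the first terms vanish because $e^{-\lambda t/\epsilon}\to0$ and $t\,e^{-\lambda t/\epsilon}\to0$ as $t\to\infty$. This yields $\bE|\xi^{x'}_s-\xi^x_s|^4\le C|x'-x|^4$ and $\bE|\xi^{x'}_{s,x_i}-\xi^x_{s,x_i}|^2\le C|x-x'|^2$.

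The only genuinely delicate point — more bookkeeping than real difficulty — is to verify that the objects produced in these limits coincide with the stationary processes $\xi^x_s$, $\xi^x_{s,x_i}$ appearing in the statement, and that the modes of convergence (almost sure along a subsequence together with the uniform moment bounds, or outright $L^p$-convergence) are strong enough to transfer each inequality; one should also confirm that the $L^2$-limit $\xi^x_{s,x_i}$ is indeed the derivative of $x\mapsto\xi^x_s$ in the sense needed for the chain rule $D_{x_i}\widetilde f(x)=\bE\big(D_{x_i}f(x,\xi^x_s)+\nabla_y f(x,\xi^x_s)\xi^x_{s,x_i}\big)$ stated just before the lemma, but this has essentially been arranged by the preceding estimates.
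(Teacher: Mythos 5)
Your proposal is correct and takes essentially the same approach as the paper: the paper's own proof is the one-line remark ``the conclusions follow directly by letting $\tau_1,\tau_2\to-\infty$ in Lemma~\ref{lemma-stat-bound} and Lemma~\ref{lemma-stat-derivative-bound},'' and you have simply filled in the bookkeeping (Cauchy-in-$L^p$ convergence, Fatou along an a.s.\ convergent subsequence, and the vanishing of the $y$-dependent exponential prefactors) that the paper leaves implicit.
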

\begin{proof}
  The conclusions follow directly by letting $\tau_1, \tau_2\rightarrow
  -\infty$ in Lemma~\ref{lemma-stat-bound} and Lemma~\ref{lemma-stat-derivative-bound}. 
\end{proof}

\bibliographystyle{siam}
\bibliography{reference}
\end{document}